\newcommand{\R}{\mathbb{R}}
\newcommand{\Tr}{\operatorname{Tr}}
\newcommand{\mc}{\mathcal}
\newcommand{\st}{\mathrm{s.t.}}
\newcommand{\diff}{\mathrm{d}}
\newcommand{\opt}{^\star}
\newcommand{\EE}{\mathbb{E}}
\newcommand{\PP}{\mathbb{P}}
\newtheorem{theorem}{Theorem}[section]
\newtheorem{lemma}[theorem]{Lemma}
\newtheorem{proposition}[theorem]{Proposition}
\newtheorem{remark}{Remark}
\newtheorem{definition}{Definition}
\newtheorem{corollary}{Corollary}
\title{Distributionally Robust Linear Quadratic Control}
\author{%
  Bahar Ta{\c s}kesen\\
  % Risk Analytics and Optimization Chair\\
  EPFL\\
\texttt{bahar.taskesen@epfl.ch}
\And Dan A. Iancu\\
% Operations, Information \& Technology\\
% Graduate School of Business\\
Stanford University\\
\texttt{daniancu@stanford.edu}
\And
 \c{C}a\u{g}{\i}l Ko\c{c}yi\u{g}it\\
 % Luxembourg Centre for Logistics and \\Supply Chain Management\\ 
 University of Luxembourg\\
\texttt{cagil.kocyigit@uni.lu}
\And
  Daniel Kuhn\\
  % Risk Analytics and Optimization Chair\\
  EPFL\\
\texttt{daniel.kuhn@epfl.ch}
  % examples of more authors
  % \And
  % Coauthor \\
  % Affiliation \\
  % Address \\
  % \texttt{email} \\
  % \AND
  % Coauthor \\
  % Affiliation \\
  % Address \\
  % \texttt{email} \\
  % \And
  % Coauthor \\
  % Affiliation \\
  % Address \\
  % \texttt{email} \\
  % \And
  % Coauthor \\
  % Affiliation \\
  % Address \\
  % \texttt{email} \\
}
\begin{document}

\maketitle

\begin{abstract}
     %We consider a discrete-time finite-horizon linear dynamical system subject to additive noise processes affecting both the state dynamics and measurements. We model the additive noise as well as the initial state as random variables governed by ambiguous probability distributions, which belong to Wasserstein ambiguity sets centered at nominal (Gaussian) distributions. We formulate a distributionally robust control problem that seeks to minimize a worst-case cost, quadratic in the state and control trajectories, across all distributions in the ambiguity set. In the absence of ambiguity, the distributionally robust control problem reduces to the well-known Linear-Quadratic-Gaussian control problem that admits an optimal control policy that is affine in measurements. We interpret the distributionally robust control problem as a zero-sum game between the controller and nature, with the controller choosing causal control inputs and nature selecting a distribution from the ambiguity set. We show that this game admits a unique Nash equilibrium, where the controller's Nash strategy is an affine control policy, and nature's Nash strategy is a Gaussian distribution. We propose a numerical solution procedure that efficiently characterizes the Nash equilibrium by using the Frank-Wolfe algorithm to solve nature's problem and then computing the controller's optimal policy via Kalman filter estimation under the Nash distribution.
Linear-Quadratic-Gaussian (LQG) control is a fundamental control paradigm that has been studied and applied in various fields such as engineering, computer science, economics, and neuroscience. It involves controlling a system with linear dynamics and imperfect observations, subject to additive noise, with the goal of minimizing a quadratic cost function depending on the state and control variables. In this work, we consider a generalization of the discrete-time, finite-horizon LQG problem, where the noise distributions are unknown and belong to Wasserstein ambiguity sets centered at nominal (Gaussian) distributions. The objective is to minimize a worst-case cost across all distributions in the ambiguity set, including non-Gaussian distributions. Despite the added complexity, we prove that a control policy that is linear in the observations is optimal, as in the classic LQG problem. We propose a numerical solution method that efficiently characterizes this optimal control policy. Our method uses the Frank-Wolfe algorithm to identify the least-favorable distributions within the Wasserstein ambiguity sets and computes the controller's optimal policy using Kalman filter estimation under these distributions.
\end{abstract}
% Possible alternative titles: "Linear-Quadratic-Gaussian Control: Beyond Gaussian Noise" "Linear Policies are Optimal in Wasserstein Distributionally Robust Linear-Quadratic-Gaussian Control"
% \notebt{I really like the first one!} \noteck{I like our current title. I think it is short and expressive enough, i.e., contains all necessary keywords in my opinion.} \dan{I tend to like the current title as well.}
\section{Introduction}
The Linear Quadratic Regulator (LQR) is a classic control problem that has served as a building block for numerous applications in engineering and computer science \cite{Auger_eta_al_2013,Chen_2012_survey_robotic_vision}, economics \cite{Hansen-Sargent-2005}, or neuroscience \cite{todorov2002optimal}. It involves controlling a system with linear dynamics and imperfect observations affected by additive noise, with the goal of minimizing a quadratic state and control cost. Under the assumption that noise terms are independent and normally distributed (a case referred to as Linear-Quadratic-Gaussian, or LQG), it is well known that the optimal control policy depends linearly on the observations and can be obtained efficiently by using the Kalman filtering procedure and dynamic programming \cite{Bertsekas_2017}.
%Under the assumption that noise terms are independent and normally distributed, it is well known that the optimal control policy depends linearly on the best state estimator and can be obtained by solving the Riccati equations, while the state estimator itself depends linearly on the observations and can be efficiently computed using the Kalman filter procedure. %This separation of state estimation and optimization has been a celebrated result in control theory and has enabled solving large-scale, realistic problem instances.

Motivated by practical settings where noise distributions may not be readily available or may not be Gaussian, this paper considers a discrete-time, finite-horizon generalization of the LQG setting where noise distributions are unknown and are chosen adversarially from  ambiguity sets characterized by a Wasserstein distance and centered around nominal (Gaussian) distributions. %This captures realistic situations where such distributions may not be readily available or may not be Gaussian.%; and by calibrating the nominal distribution using empirical data and adjusting the size of the Wasserstein ball, the approach provides out-of-sample performance guarantees.

We show that, even under distributional ambiguity, the optimal control policy remains linear in the system's observations. Our proof is novel and does not rely on traditional recursive dynamic programming arguments. Instead, we re-parametrize the control policy in terms of the purified state observations and we derive an upper bound for the resulting minimax formulation by relaxing the ambiguity set (from a Wasserstein ball into a Gelbrich ball) while simultaneously restricting the controller to linear dependencies. We then use convex duality to prove that this upper bound matches a lower bound obtained by restricting the ambiguity set in the dual of the minimax formulation. This implies the optimality of linear output feedback controllers, thus generalizing the classic  results to a distributionally robust setting. 

{\color{black} We also find that the worst-case distribution is actually Gaussian, which leads to a very efficient algorithm for finding optimal controllers. Specifically, %Surprisingly perhaps we show that the optimal controller of the distributionally robust LQG problem can be computed through the Kalman filtering and dynamic programming given the least favorable distribution within the Wasserstein ambiguity set. we propose an efficient algorithm to identify this least favorable distribution by solving the dual of the minimax formulation through a 
we propose an algorithm based on the 
Frank-Wolfe first-order method that %is an iterative first-order optimization algorithm for constrained optimization. This 
at every step solves sub-problems corresponding to classic LQG control problems, using Kalman filtering and dynamic programming. We show that this algorithm enjoys a sublinear convergence rate %thanks to the structural properties of the objective and its direction-finding subproblems can be solved in quasi-closed form 
and is susceptible to parallelization. Lastly, we implement the algorithm leveraging PyTorch's automatic differentiation module and we find that it yields uniformly lower runtimes than a direct method (based on solving semidefinite programs) across all problem horizons.}

\subsection{Literature Review}
\label{sec:literature review}

{\color{black}
This paper is related to the ample literature in control theory and engineering aimed at designing controllers that are robust to noise.
The classic LQR/LQG theory, developed in the 1960s, examined linear dynamical systems in either time or frequency domain, seeking to minimize a combination of quadratic state and control costs (in time-domain) or the $\mc H_2$ norm of the system's transfer 
 function (in frequency domain).
%Later in the 1970s, 
Motivated by findings that LQG controllers do not provide the guaranteed robust stability properties of LQR controllers \cite{ref:doyle1978guaranteed}, %and thus initiated the huge field of robust control.
%Since the initial work of~\cite{ref:zames1981feedback}, 
much effort has been devoted subsequently to designing controllers that are robust to worst-case perturbations, typically evaluated in terms of the $\mc H_\infty$ norm of the system's transfer function (see, e.g., \cite{ref:doyle1988state, ref:zhou1998essentials} for a comprehensive review of $\mc H_\infty$ and $\mc H_2$ controllers). Because $\mathcal H_\infty$ controllers tend to be overly conservative~\cite{ref:karapetyan2022regret}, various approaches have been proposed to balance the performance of nominal and robust controllers, e.g., by combining~$\mc H_2$ and $\mc H_\infty$ approaches~\cite{ref:bernstein1988lqg, ref:doyle1989optimal}. %The resulting controllers are often complicated to express and compute~\cite{ref:scherer1995mixed}, and do not admit a simple representation in terms of a set of noise distributions against which the controller protects the system. 
A parallel stream of literature has considered risk-sensitive control~\cite{Whittle_1981}, which minimizes an entropic risk measure instead of the expected quadratic cost. %Under an infinite horizon setup, \cite{ref:glover1988state} discussed connections between $\mc H_\infty$ control and 
Although risk-sensitive control has a distributionally robust flavor (as the entropic risk measure is equivalent to a distributionally robust quadratic objective penalized via Kullback-Leibler divergence), risk-sensitive control models do not admit a distributionally robust formulation because the entropic risk measure is convex, but not coherent~\cite{ref:follmer2011stochastic}.
In contrast, our distributionally robust model provides a direct interpretation of the exact set of noise distributions against which the controller provides safeguards, and leads to a computationally tractable framework for finding the optimal controller.
}

% uals the expected value minus \theta^{-1} times the KL divergence of P wrt \hat P in the worst case across all possible distributions. Hence, nature is not subject to any constraints but pays a penalty for selecting a distribution that deviates from the nominal distribution. Models of this kind have been studied independently under the name "minimax robust control" (Petersen, 2000). Note that the entropic risk-measure is time-consistent. Therefore, risk-sensitive control problems (and, equivalently, minimax LQG problems) can be solved by dynamic programming. The resulting optimal policies are time-consistent. Note also that the entropic risk measure evaluates to +infinity for many distributions because the cost function is exponentiated before we calculate the expected value. Thus, a very thin upper tail is required for the entropic risk measure to be finite.
% \noindent\textbf{Risk Sensitive Control.}

% Risk sensitive control problem is related to $H_\infty$ control problem when $T \to \infty$.

% Risk sensitive problem coincides with the soft version LQG with DRO when the ambiguity set is induced by KL-divergence. The soft version here refers to when the objective is penalized by KL divergence of the worst-case distribution from the nominal Gaussian distribution.}

In this sense, our work is more directly related to the literature on distributionally robust control, which seeks controllers that minimize expected costs under worst-case noise distributions \cite{Bertsimas_Iancu_Parrilo_2011,Kim_Yang_2023,Kotsalis_2021, Petersen_2000, vanParys_2016,Yang_2021}. Closest to our work are \cite{Han2023, Kim_Yang_2023}. \cite{Kim_Yang_2023} proves the optimality of linear state-feedback control policies for a related minimax LQR model with a Wasserstein distance but with {\it perfect} state observations. With perfect observations, the optimal policies in the classic LQR formulation are independent of the noise distribution and are thus inherently already robust, so considering imperfect observations is what makes the problem significantly more challenging in our case. \cite{Han2023} studies a minimax formulation based on the Wasserstein distance with both state and observation noise but without any control policy, and focuses solely on the problem of estimating the states. Several papers have considered robust formulations with imperfect observations but for constrained systems \cite{Ben-Tal_Boyd_Nemirovski_2005,ben2006extending,Kotsalis_2021}, which are more challenging; the common approach is to restrict attention to linear feedback policies for computational tractability, and without proving their optimality.

Also related is the recent literature stream on distributionally robust optimization using the Wasserstein distance \cite{ref:esfahani2018data}. Within this stream, the closest work is \cite{nguyen2023bridging, shafieezadeh2018wasserstein}, which consider the problem of minimax mean-squared-error estimation when ambiguity is modeled with a Wasserstein distance from a nominal Gaussian distribution. Our proof builds on some ideas from these papers (e.g., relying on the Gelbrich distance in the construction of the upper bound), which it combines with ideas from control theory on purified output-feedback to obtain the overall construction. Also related is~\cite{arora2022data}, which studies multistage distributionally robust problems with ambiguity sets given by a nested Wasserstein distance for stochastic processes and identifies computationally tractable cases. For a broader overview of developments related to optimal transport and Wasserstein distance with an emphasis on computational tractability and applications in machine learning, we refer~to \cite{peyre_cuturi_2019}.

% Lastly,
Finally, our paper is also
related to literature that documents the optimality of linear/affine policies in (distributionally) robust dynamic optimization models. \cite{bertsimas2010optimality,iancu2013supermodularity} prove optimality for one-dimensional linear systems affected by additive noise and with perfect state observations, but with general (convex) state and/or control costs, \cite{ref:hadjiyiannis2011affine,vanParys_Goulart_2013} provide computationally tractable approaches to quantifying the suboptimality of affine controllers in finite or infinite-horizon settings, and \cite{bertsimas2012power,elhousni2021optimality,georghiou2021optimality} characterize the performance of affine policies in two-stage (distributionally) robust dynamic models.

\textit{Notation.} 
 All random objects are defined on a probability space $(\Omega, \mathcal F, {\mathbb P})$. Thus, the distribution of any random vector~$\xi:\Omega\rightarrow\mathbb R^d$ is given by the pushforward distribution ${\mathbb P}_\xi={\mathbb P}\circ \xi^{-1}$ of~${\mathbb P}$ with respect to~$\xi$. The expectation under~$\PP$ is denoted by $\mathbb E_{{\mathbb P}}[\cdot]$. 
 % {\color{blue} We use $\PP_{\xi_1}\otimes\PP_{\xi_2}$ to denote the joint distribution of two independent random vectors $\xi_1$ and $\xi_2$ with respective distributions~$\PP_{\xi_1}, \PP_{\xi_2}$.} 
 For any $t \in \mathbb Z_+$, we set $[t] = \{0, \ldots, t\} $.

\section{Problem Definition}
\label{sec:problem-definition}
We consider a discrete-time linear dynamical system 
%\begin{subequations}
\label{system}
\begin{equation}
    \label{eq:dynamics}
    x_{t+1} = A_t  x_t + B_t  u_t +  w_t \quad \forall t  \in [T-1]
\end{equation}
with states~$x_t\in\mathbb R^n$, control inputs~$u_t \in\mathbb R^m$, process noise~$w_t\in\mathbb R^n$ and system matrices~$A_t\in \R^{n\times n}$ and~$B_t \in \R^{n \times m}$. The controller only has access to imperfect state measurements
\begin{equation}
    \label{eq:observation}
    y_t = C_t x_t + v_t \quad \forall t \in [T-1]
\end{equation}
corrupted by observation noise~$v_t\in \mathbb R^p$, where~$C_t \in \R^{p \times n}$ and usually~$p\leq n$ (so that observing~$y_t$ would not allow reconstructing~$x_t$ even if there were no observation noise). The control inputs~$u_t$ are causal, i.e., depend on the past observations~$y_0,\ldots, y_t$ but not on the future observations~$y_{t+1},\ldots, y_{T-1}$. More precisely, the set of feasible control inputs $\mc U_y$ is the set of random vectors $(u_0,u_1,\dots,u_{T-1})$ where for every~$t$ there exists a measurable control policy~$\varphi_t:\mathbb R^{p(t+1)}\rightarrow\mathbb R^m$ such that $u_t = \varphi_t(y_0,\ldots,y_t)$. Controlling the system generates costs that depend quadratically on the states and the controls:
\begin{equation}
    \label{eq:lqr-cost-function}
    J = \sum\limits_{t=0}^{T-1}(  x_t^\top Q_t   x_t +   u_t^\top R_t   u_t) +   x_{T}^\top Q_T   x_T,
\end{equation}
%\end{subequations}
where~$Q_t \in\mathbb S_+^n$ and $R_t \in\mathbb S_{++}^m$ represent the state and input cost matrices, respectively. The exogenous random vectors~$x_0$, $\{w_t\}_{t=0}^{T-1}$ and~$\{v_t\}_{t=0}^{T-1}$ are mutually independent and follow probability distributions given by $\PP_{x_0}, \{\PP_{w_t}\}_{t=0}^{T-1}$, and $\{\PP_{v_t}\}_{t=0}^{T-1}$, respectively. 
%Let $\PP = \PP_{x_0} \times (\times_{t=0}^{T-1}\PP_{w_t})\times (\times_{t=0}^{T}\PP_{v_t})$ denote the corresponding joint probability distribution. 
As the control inputs are causal, the system equations~\eqref{system} imply that~$x_t$, $u_t$ and~$y_t$ can be expressed as measurable functions of the exogenous uncertainties~$x_0$ as well as~$w_s$ and~$v_s$, $s\in [t]$, for every~$t$. From now on we may thus assume without loss of generality that~$\Omega = \R^n \times \R^{n\times T}  \times \R^{{\color{black}p \times T}}$
% \R^{m \times (T+1)}$ 
is the space of realizations of the exogenous uncertainties, $\mathcal F$ is the Borel $\sigma$-algebra on~$\Omega$ and~$\PP = \PP_{x_0} \otimes (\otimes_{t=0}^{T-1}\PP_{w_t})\otimes (\otimes_{t=0}^{T}\PP_{v_t})$, {\color{black}where $\PP_1 \otimes \PP_2$ denotes the independent coupling of the distributions $\PP_1$ and $\PP_2$.} 

In this context, the classic LQG model assumes that $\PP$ is known and Gaussian, and seeks $u \in \mc{U}_y$ that minimizes $\EE_{\PP}[J]$. Appendix~\S\ref{sec: appx: optimal-solution-classic-LQG} reviews the standard approach for computing optimal control inputs  by estimating states through Kalman filtering techniques and using dynamic programming.

In contrast, we assume that $\PP$ is only known to belong to an ambiguity set $\mc{W}$, and we formulate a distributionally robust LQG problem that seeks $u \in \mc{U}_y$ to minimize the worst-case expected cost:
\begin{equation}
%\min\limits_{u \in \mc U_y}
\max\limits_{\PP \in \mc W}\EE_{\PP}\left[ \sum\limits_{t=0}^{T-1}(  x_t^\top Q_t   x_t +   u_t^\top R_t   u_t) +   x_{T}^\top Q_T   x_T \right].
\label{eq:lqr}
\end{equation}
We construct the ambiguity set $\mc{W}$ as a ball based on the Wasserstein distance. Specifically, we assume that a \emph{nominal} Gaussian distribution~$\hat{\PP} = \hat{\PP}_{x_0} \otimes (\otimes_{t=0}^{T-1} \hat{\PP}_{w_t})\otimes (\otimes_{t=0}^{T}\hat{\PP}_{v_t})$ is available so that~$\hat\PP_{x_0}=\mathcal N(0,\hat X_0)$, $\hat \PP_{w_t}=\mathcal N(0,\hat W_t)$, and~$\hat \PP_{v_t}=\mathcal N(0,\hat V_t)$ for all~$t\in[T-1]$, and $\mc{W}$ is given by:
\begin{align*}
     \mc{W} & = \mc W_{x_0} \otimes (\otimes_{t=0}^{T-1} \mc W_{w_t}) \otimes ( \otimes_{t=0}^{T-1} \mc W_{v_t} ),
\end{align*}
where
\begin{align*}
\mc W_{x_0} & = \{\PP_{x_0} \in \mc P(\R^n) : \mathds W(\hat \PP_{x_0}, \PP_{x_0}) \leq \rho_{x_0}, ~\EE_{\PP_{x_0}}[x_0] = 0\}\\
    \mc W_{w_t} & = \{\PP_{w_t} \in \mc P(\R^n) : \mathds W(\hat \PP_{w_t}, \PP_{w_t}) \leq \rho_{w_t}, ~\EE_{\PP_{w_t}}[w_t] = 0\} \\
    \mc W_{v_t} & = \{\PP_{v_t} \in \mc P(\R^m): \mathds W(\hat \PP_{v_t}, \PP_{v_t}) \leq \rho_{v_t}, ~\EE_{\PP_{v_t}}[v_t] = 0\} ,
\end{align*}
and $\mathds{W}$ is the 2-Wasserstein distance. {\color{black} Thus, by construction, all exogenous random variables~$x_0, w_0, \ldots, w_{T-1}, v_0, \ldots, v_{T-1}$ are independent under every distribution in~$\mc W$.}
\begin{definition}[2-Wasserstein distance]
The 2-Wasserstein distance between two distributions $\mathbb P_1$ and $\mathbb P_2$ on $\mathbb R^d$ with finite second moments is given by{\em\begin{equation*}
    \begin{aligned}
    {\color{black}\mathds W} (\mathbb P_1, \mathbb P_2) = \left(\inf_{\pi \in \Pi(\mathbb P_1, \mathbb P_2)} \int_{\mathbb R^d\times\mathbb R^d} \Vert \xi_1 - \xi_2 \Vert_2^2 \, \pi(\text{d}\xi_1, \text{d}\xi_2)\right)^{\frac{1}{2}},
    \end{aligned}
\end{equation*}}where $\Pi(\mathbb P_1, \mathbb P_2)$ denotes the set of all couplings, that is, all joint distributions of the random variables $ \xi_1$ and $ \xi_2$ with marginal distributions $\mathbb P_1$ and $\mathbb P_2$, respectively.
\end{definition}

%By construction, the marginal distribution of each exogenous uncertain parameter belongs to a 2-Wasserstein ball around the corresponding nominal marginal distribution and has mean~$0$, and the exogenous uncertain parameters are mutually independent under any~$\mathbb P\in\mc W$. Additionally, 
Our model strictly generalizes the classic LQG setting,\footnote{Our assumption that noise terms are zero-mean is consistent with the standard LQG model \cite{Bertsekas_2017}. Requiring $\EE_{\PP_{x_0}}[x_0]=0$ is assumed for clarity and without loss of generality.} which can be recovered by choosing $\rho_{x_0}=\rho_{w_t}=\rho_{v_t}=0$. The parameters $\rho$ thus allow quantifying the uncertainty about the nominal model and building robustness to mis-specification. We emphasize that the Wasserstein ambiguity set~$\mc W$ contains many non-Gaussian distributions and it is not readily obvious that the worst-case distribution in~\eqref{eq:lqr} is in fact Gaussian. However, the set $\mc W$ is also non-convex, as it contains only distributions under which the exogenous uncertainties are independent, which makes the distributionally robust LQG problem potentially difficult to solve.

\section{Nash Equilibrium and Optimality of Linear Output Feedback Controllers}
\label{sec:Nash}
We henceforth view the distributionally robust LQG problem as a zero-sum game between the controller, who chooses causal control inputs, and nature, who chooses a distribution~$\PP\in\mc W$. In this section we show that this game admits a
Nash equilibrium, where nature's Nash strategy is a {\em Gaussian} distribution~$\PP\opt\in\mc W$ and the controller's Nash strategy is a {\em linear} output feedback policy based on the Kalman filter evaluated under~$\PP\opt$.

% We start by noting that the distributionally robust LQG problem can be expressed more compactly as
% \begin{equation}\label{DRCP}
%     p^\star = \left\{
%     \begin{array}{cll} \min\limits_{  x,   u,   y} &\max\limits_{\mathbb P \in \mathcal{W}} \;\;\mathbb E_{\mathbb P} \left[   u^\top R   u +   x^\top Q  x \right]\\
%     \;\,\text{s.t.} &  u \in \mathcal U_{y},\;\;{x} = H u + G w, \;\; y = C  x + v,
%     \end{array}\right.
% \end{equation}
% where $x = (x_0, \dots, x_T)$, $u = (u_0, \dots, u_{T-1})$, $y = (y_0, \dots, y_{T-1})$, $w = (x_0, w_0,  \dots,  w_{T-1})$, $v = (v_0, \dots, v_{T-1})$, ~$\mc U_y$ denotes the set of all causal control inputs~$u$, and $R$, $Q$, $H$, $G$ and~$C$ are suitable block matrices (see Appendix~\S\ref{sec: appx: stacked-matrices} for their precise definitions).
\paragraph{Purified Observations.} Before outlining our proof strategy, we first simplify the problem formulation by re-parametrizing the control inputs in a more convenient form (following \cite{Ben-Tal_Boyd_Nemirovski_2005, ben2006extending, ref:hadjiyiannis2011affine}). Note that the control inputs in the LQG formulation are subject to cyclic dependencies, as $u_t$ depends on~$y_t$, while~$y_t$ depends on~$x_t$ through~\eqref{eq:observation}, and~$x_t$ depends again on~$u_t$ through~\eqref{eq:dynamics}, etc. Because these dependencies make the problem hard to analyze, it is preferable to instead consider the controls as functions of a new set of so-called {\em purified} observations instead of the actual observations $y_t$. 

Specifically, we first introduce a fictitious {\em noise-free} system 
\begin{equation*}
    \hat x_{t+1} = A_t  \hat x_t + B_t  u_t \quad \forall t  \in [T-1]\quad\text{and}\quad \hat y_t = C_t \hat x_t \quad \forall t \in [T-1]
\end{equation*}
with states~$\hat x_t\in\mathbb R^n$ and outputs~$\hat y_t\in\mathbb R^p$, which is initialized by~$\hat x_0=0$ and controlled by the {\em same} inputs~$u_t$ as the original system~\eqref{system}. We then define the purified observation at time~$t$ as~$\eta_t= y_t - \hat{y}_t$ and we use~$\eta = (\eta_0, \dots, \eta_{T-1})$ to denote the trajectory of {\em all} purified observations.

As the inputs~$u_t$ are causal, the controller can compute the fictitious state~$\hat x_t$ and output~$\hat y_t$ from the observations~$y_0,\ldots,y_t$. Thus, $\eta_t$ is representable as a function of~$y_0,\ldots, y_t$. Conversely, one can show by induction that~$y_t$ can also be represented as a function of~$\eta_0,\ldots, \eta_t$. Moreover, any measurable function of~$y_0,\ldots,y_t$ can be expressed as a measurable function of~$\eta_0,\ldots, \eta_t$ and vice-versa \cite[Proposition~II.1]{ref:hadjiyiannis2011affine}. So if we define ~$\mc U_\eta$ as the set of all control inputs~$(u_0,u_1,\dots,u_{T-1})$ so that $u_t = \psi_t(  \eta_0, \dots,   \eta_{t})$ for some measurable function~$\psi_t : \mathbb R^{p(t+1)}\rightarrow\mathbb R^m$ for every~$t\in[T-1]$, the above reasoning implies that~$\mc U_\eta = \mc U_y$. 

In view of this, we can rewrite the distributionally robust LQG problem equivalently as
\begin{align}\label{DRCP}
    p^\star &=
    \left\{
    \begin{array}{cll}
    \min\limits_{  x,   u,   y} &\max\limits_{\mathbb P \in \mathcal{W}} \;\;\mathbb E_{\mathbb P} \left[   u^\top R   u +   x^\top Q  x \right] \notag\\
    \text{s.t.} &  u \in \mathcal U_{y},\;{x} = H u + G w, \; y = C  x + v
    \end{array}\right.\\
    &= 
    \left\{
    \begin{array}{cll} \min\limits_{x,   u} &\max\limits_{\mathbb P \in \mathcal{W}} \;\;\mathbb E_{\mathbb P} \left[   u^\top R   u +   x^\top Q  x \right]\\
    \text{s.t.} &  u \in \mathcal U_{\eta}, \;\;\;{x} = H u + G w,
    \end{array}\right.
\end{align}
where $x = (x_0, \dots, x_T)$, $u = (u_0, \dots, u_{T-1})$, $y = (y_0, \dots, y_{T-1})$, $w = (x_0, w_0,  \dots,  w_{T-1})$, $v = (v_0, \dots, v_{T-1})$, $\eta = (\eta_0, \dots, \eta_{T-1})$, and $R$, $Q$, $H$, $G$ and~$C$ are suitable block matrices (see Appendix~\S\ref{sec: appx: stacked-matrices} for their precise definitions). The latter reformulation involving the purified observations~$\eta$ is useful because these are {\em independent} of the inputs. Indeed, by recursively combining the equations of the original and the noise-free systems, one can show that
% \begin{equation*}
%     \begin{aligned}
%       \eta_t = C_t \left(\prod_{s=0}^{t-1}A_{s}x_0 + \sum_{s=0}^{t-1} \left(\prod_{k = s+1}^{t-1} A_{k}\right) w_s \right) + v_t &&\forall t\in [T-1].
%     \end{aligned}
% \end{equation*}
%Thus, we have~
$\eta = D  w + v$ for some block triangular matrix~$D$ (see Appendix~\S\ref{sec: appx: stacked-matrices} for its construction). This shows that the purified observations depend (linearly) on the exogenous uncertainties but {\em not} on the control inputs. Hence, the cyclic dependencies complicating the original system are eliminated in~\eqref{DRCP}.

Subsequently, we also study the dual of~\eqref{DRCP}, defined as
\begin{equation}\label{DRCPdual}
    d^\star = \left\{
    \begin{array}{cll} \max\limits_{\mathbb P \in \mathcal{W}} &\min\limits_{  x,   u} ~\mathbb E_{\mathbb P} \left[   u^\top R   u +   x^\top Q  x \right]\\
    \;\,\text{s.t.}  &u \in \mathcal U_{\eta},\;\;{x} = H u + G w.
    \end{array}\right.
\end{equation}
The classic  minimax inequality implies that~$p\opt\geq d\opt$. If we can prove that~$p\opt=d\opt$, that~\eqref{DRCP} has a solution~$u\opt$ and that~\eqref{DRCPdual} has a solution~$\PP\opt$, then~$(u\opt,\PP\opt)$ must be a Nash equilibrium of the zero-sum game at hand \cite[Theorem~2]{ref:rockafellar1974conjugate}. However, because~$\mc U_\eta$ is an infinite-dimensional function space and~$\mc W$ is an infinite-dimensional, non-convex set of non-parametric distributions, the existence of a Nash equilibrium (in pure strategies) is not at all evident. Instead, our proof strategy will rely on constructing an upper bound for $p^\star$ and a lower bound for $d^\star$, and showing that these match.

\paragraph{Upper Bound for $p^\star$.} We obtain an upper bound for $p^\star$ by suitably \emph{enlarging} the ambiguity set $\mc W$ and \emph{restricting} the controllers $u_t$ to linear dependencies. We enlarge~$\mathcal W$ by ignoring all information about the distributions in~$\mathcal W$ except for their covariance matrices, and by replacing the Wasserstein distance with the Gelbrich distance. To that end, we first define the Gelbrich distance on the space of covariance matrices.

\begin{definition}[Gelbrich distance]
The Gelbrich distance between the two 
covariance matrices $\Sigma_1, \Sigma_2 \in \mathbb S^d_+$ is given by
\begin{equation*}
    \mathds G (\Sigma_1, \Sigma_2) = \sqrt{ \operatorname{Tr}\left(\Sigma_1 + {\Sigma}_2 - 2\left( {\Sigma}_2^{\frac{1}{2}} \Sigma_1 {\Sigma}_2^{\frac{1}{2}}\right)^{\frac{1}{2}}\right)}.
\end{equation*}
\label{def:gelbrich-distance}
\end{definition}
We are interested in the Gelbrich distance because of its close connection to the 2-Wasserstein distance. Indeed, it is known that the 2-Wasserstein distance between two distributions with zero means is bounded below by the Gelbrich distance between the respective covariance matrices.

\begin{proposition}[Gelbrich bound {\cite[Theorem 2.1]{gelbrich1990formula}}]\label{WassersteinSubsetGelbrich}
For any two distributions $\mathbb P_1$ and $\mathbb P_2$ on $\mathbb R^d$ with zero means and covariance matrices $\Sigma_1,\Sigma_2 \in \mathbb S_+^d$, respectively, we have $\mathds W (\mathbb P_1, \mathbb P_2) \geq \mathds G (\Sigma_1, \Sigma_2)$.
\end{proposition}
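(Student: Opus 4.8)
The plan is to discard all information about $\mathbb P_1$ and $\mathbb P_2$ beyond their second moments, thereby reducing the desired lower bound to a finite-dimensional semidefinite program over cross-covariance matrices. First I would fix an arbitrary coupling $\pi\in\Pi(\mathbb P_1,\mathbb P_2)$ and expand the transport cost. Since both marginals are centered, $\int\|\xi_i\|_2^2\,\pi(\mathrm d\xi_1,\mathrm d\xi_2)=\Tr(\Sigma_i)$ for $i=1,2$, while the cross term equals $\Tr(C)$ with $C=\EE_\pi[\xi_1\xi_2^\top]$, so that
\[
\int\|\xi_1-\xi_2\|_2^2\,\pi(\mathrm d\xi_1,\mathrm d\xi_2)=\Tr(\Sigma_1)+\Tr(\Sigma_2)-2\Tr(C).
\]
The key structural fact is that the joint second-moment matrix of $(\xi_1,\xi_2)$ under $\pi$, i.e., the block matrix with diagonal blocks $\Sigma_1,\Sigma_2$ and off-diagonal block $C$, is positive semidefinite. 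As every coupling induces such a feasible $C$, taking the infimum over $\pi$ (a subset of all feasible $C$) yields
\[
\mathds W(\mathbb P_1,\mathbb P_2)^2\;\ge\;\Tr(\Sigma_1)+\Tr(\Sigma_2)-2\,\sup\Big\{\Tr(C):\ \begin{bmatrix}\Sigma_1 & C\\ C^\top & \Sigma_2\end{bmatrix}\succeq 0\Big\}.
\]

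It therefore suffices to show that every feasible $C$ satisfies $\Tr(C)\le \Tr\big((\Sigma_2^{1/2}\Sigma_1\Sigma_2^{1/2})^{1/2}\big)$, as this turns the right-hand side above into exactly $\mathds G(\Sigma_1,\Sigma_2)^2$. This SDP bound is the crux of the argument, and I expect it to be the main obstacle rather than the reduction itself. I would first treat the case $\Sigma_1,\Sigma_2\succ 0$: by the Schur complement, feasibility is equivalent to $C\Sigma_2^{-1}C^\top\preceq\Sigma_1$, and substituting $C=\Sigma_1^{1/2}L^\top\Sigma_2^{1/2}$ recasts this constraint cleanly as $L^\top L\preceq I$, i.e., $L$ has spectral norm at most one. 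Then $\Tr(C)=\Tr(L^\top\Sigma_2^{1/2}\Sigma_1^{1/2})$ by cyclicity of the trace, and von Neumann's trace inequality bounds this by the nuclear norm $\|\Sigma_2^{1/2}\Sigma_1^{1/2}\|_*=\Tr\big((\Sigma_1^{1/2}\Sigma_2\Sigma_1^{1/2})^{1/2}\big)$.

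Finally, I would record the identity $\Tr\big((\Sigma_1^{1/2}\Sigma_2\Sigma_1^{1/2})^{1/2}\big)=\Tr\big((\Sigma_2^{1/2}\Sigma_1\Sigma_2^{1/2})^{1/2}\big)$, which holds because both sides equal the sum of the singular values of $B=\Sigma_2^{1/2}\Sigma_1^{1/2}$ (note that $B^\top B=\Sigma_1^{1/2}\Sigma_2\Sigma_1^{1/2}$ and $BB^\top=\Sigma_2^{1/2}\Sigma_1\Sigma_2^{1/2}$ share the same nonzero spectrum); this matches the Gelbrich formula. The only remaining technicality is the degenerate case where $\Sigma_1$ or $\Sigma_2$ is singular, in which the Schur-complement substitution step breaks down. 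I would handle it by applying the established bound to the perturbed pair $\Sigma_1+\varepsilon I\succ 0$ and $\Sigma_2+\varepsilon I\succ 0$ (feasibility is preserved, since adding $\varepsilon I$ to both diagonal blocks amounts to adding $\varepsilon I$ to the entire block matrix and hence keeps it positive semidefinite) and then letting $\varepsilon\downarrow 0$, invoking continuity of the trace and of the matrix square root on $\mathbb S_+^d$.
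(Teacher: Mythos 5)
Your proof is correct. Note first that the paper does not prove this proposition at all --- it is imported verbatim from the literature (Gelbrich 1990, Theorem 2.1), so there is no in-paper argument to compare against; what you have written is a self-contained derivation of the cited result. Your route is the standard moment-relaxation proof: discard everything about the coupling except its second-moment matrix, observe that $\bigl[\begin{smallmatrix}\Sigma_1 & C\\ C^\top & \Sigma_2\end{smallmatrix}\bigr]\succeq 0$ is a necessary condition on the cross-covariance, and solve the resulting SDP $\sup\{\Tr(C)\}$ in closed form via the Schur complement, the substitution $C=\Sigma_1^{1/2}L^\top\Sigma_2^{1/2}$ with $\|L\|_2\le 1$, and von Neumann's trace inequality, yielding the nuclear norm $\Tr\bigl((\Sigma_2^{1/2}\Sigma_1\Sigma_2^{1/2})^{1/2}\bigr)$. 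All steps check out, including the identity between the two symmetrized square roots (both equal the sum of singular values of $\Sigma_2^{1/2}\Sigma_1^{1/2}$) and the perturbation argument for singular covariances, which correctly preserves feasibility of $C$ because adding $\varepsilon I$ to both diagonal blocks adds $\varepsilon I$ to the whole block matrix. Incidentally, this argument is slightly more informative than the bare citation: it exhibits the Gelbrich distance as the value of an explicit SDP relaxation of the transport problem, which is exactly the perspective the paper later exploits (via \cite{nguyen2023bridging}) when working with the sets $\mc G_W$ and $\mc G_V$.
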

%We do not include a proof for this result, which can be found in the literature. %Proposition~\ref{WassersteinSubsetGelbrich} can be proved by relaxing the optimal transport problem in the definition of the 2-Wasserstein distance. Indeed, instead of minimizing over all couplings~$\pi$ whose marginals match~$\PP_1$ and~$\PP_2$, we can minimize over the larger class of all couplings~$\pi$ whose marginals only have the same covariance matrices as~$\PP_1$ and~$\PP_2$. One can then show that this relaxed optimal transport problem is equivalent to a semidefinite program whose optimal value is given by~$\mathds G (\Sigma_1, \Sigma_2)^2$.  
%In the following, we use~$\hat{X_0} =\EE_{\hat \PP}[x_0 x_0^\top]$ as well as~${\hat W}_t = \mathbb E_{\hat{\mathbb P}}[  w_t w_t^\top]$ and ${\hat V}_t = \mathbb E_{\hat{\mathbb P}}[  v_t v_t^\top]$ for all $t\in[T-1]$ to denote the covariance matrices of the exogenous uncertainties under the nominal distribution~$\hat\PP$. 
Recalling that $\hat{X_0}$, ${\hat W}_t$ and ${\hat V}_t$ respectively denote the covariance matrices for $x_0, w_t$ and $v_t$ under the nominal distribution $\hat{\PP}$, we can then define the following Gelbrich ambiguity set for the exogenous uncertainties:
\begin{align*}
    \mc G & = \mc G_{x_0} \otimes (\otimes_{t=0}^{T-1} \mc G_{w_t}) \otimes (\otimes_{t=0}^{T-1} \mc G_{v_t}),
\end{align*}
where
\begin{align*}
\mathcal G_{x_0} & = \{ \mathbb P_{x_0} \in \mathcal P(\R^n)~{\color{black}:}~\mathbb E_{\mathbb P_{x_0}}[x_0] = 0,\;\mathbb E_{\mathbb P}[  x_0 x_0^\top ] = X_0,~
    \mathds G (X_0, \hat X_0) \leq \rho_{x_0} \} \\
    \mathcal G_{w_t} & = \{ \mathbb P_{w_t} \in \mathcal P(\R^n) :\mathbb E_{\mathbb P_{w_t}}[w_t] = 0,\;\mathbb E_{\mathbb P}[  w_t w_t^\top ] = W_t,~
    \mathds G (W_t, \hat W_t) \leq \rho_{w_t}\} \\
    \mathcal G_{v_t} & = \{ \mathbb P_{v_t} \in \mathcal P(\R^m) :\mathbb E_{\mathbb P_{v_t}}[v_t] = 0,\;\mathbb E_{\mathbb P}[  v_t v_t^\top ] = V_t,~
    \mathds G (V_t, \hat V_t) \leq \rho_{v_t} \}. 
\end{align*}
By construction, the random vectors~$x_0$, $\{w_t\}_{t=0}^{T-1}$ and~$\{v_t\}_{t=0}^{T-1}$ are thus mutually independent under any~$\PP\in \mc G$. In addition and as a direct consequence of Proposition~\ref{WassersteinSubsetGelbrich}, $\mathcal G$ constitutes an outer approximation for the Wasserstein ambiguity set~$\mathcal W$, as summarized in the next result.
\begin{corollary}[Gelbrich hull]\label{cor: Gelbrich outer approximation}
 We have $\mathcal W \subseteq \mathcal G$.
\end{corollary}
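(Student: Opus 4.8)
The plan is to reduce the product inclusion to its individual blocks and invoke the Gelbrich bound on each. First I would prove the single-block inclusions $\mathcal{W}_{x_0}\subseteq\mathcal{G}_{x_0}$, $\mathcal{W}_{w_t}\subseteq\mathcal{G}_{w_t}$ and $\mathcal{W}_{v_t}\subseteq\mathcal{G}_{v_t}$ for every $t\in[T-1]$; since these share an identical structure, it suffices to treat one, say the initial state. Fix any $\PP_{x_0}\in\mathcal{W}_{x_0}$. By definition it has zero mean and satisfies $\mathds{W}(\hat{\PP}_{x_0},\PP_{x_0})\leq\rho_{x_0}$, and because the $2$-Wasserstein distance to the Gaussian $\hat{\PP}_{x_0}=\mathcal{N}(0,\hat{X}_0)$ is finite, $\PP_{x_0}$ has a finite second moment, so its covariance $X_0=\EE_{\PP_{x_0}}[x_0x_0^\top]$ is well defined.

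Both $\PP_{x_0}$ and $\hat{\PP}_{x_0}$ being centered, I would apply the Gelbrich bound of Proposition~\ref{WassersteinSubsetGelbrich} to obtain $\mathds{G}(X_0,\hat{X}_0)=\mathds{G}(\hat{X}_0,X_0)\leq\mathds{W}(\hat{\PP}_{x_0},\PP_{x_0})\leq\rho_{x_0}$, where I use that the Gelbrich distance is symmetric in its two arguments to align the ordering with the constraint defining $\mathcal{G}_{x_0}$. Since $\PP_{x_0}$ is centered with covariance $X_0$ satisfying $\mathds{G}(X_0,\hat{X}_0)\leq\rho_{x_0}$, it lies in $\mathcal{G}_{x_0}$. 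Repeating this verbatim for each $w_t$ and $v_t$ yields all the block-wise inclusions.

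Finally I would lift these to the product sets, which is pure bookkeeping because $\mathcal{W}$ and $\mathcal{G}$ are built from the same independent-coupling template and differ only block by block. Any $\PP\in\mathcal{W}$ factors as $\PP_{x_0}\otimes(\otimes_{t=0}^{T-1}\PP_{w_t})\otimes(\otimes_{t=0}^{T-1}\PP_{v_t})$ with each marginal in the corresponding Wasserstein block; by the inclusions just established, each marginal belongs to the matching Gelbrich block, and since $\mathcal{G}$ is exactly the set of independent couplings of such marginals, $\PP\in\mathcal{G}$. The closest thing to an obstacle is the routine check that every member of the Wasserstein ball genuinely admits a finite covariance so that the second-moment equality defining the Gelbrich block can be instantiated; this holds automatically, since a finite $\mathds{W}$-distance to the finite-second-moment Gaussian center forces finite second moments via the triangle inequality.
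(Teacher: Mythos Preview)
Your proof is correct and follows the same approach as the paper, which simply states that the corollary is a direct consequence of Proposition~\ref{WassersteinSubsetGelbrich} without spelling out the block-wise argument. You have merely made explicit the routine details (finite second moments, symmetry of the Gelbrich distance, and the product-structure lift) that the paper leaves implicit.
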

Because $\mc G$  covers~$\mc W$, we henceforth refer to it as the {\em Gelbrich hull} of the Wasserstein ambiguity set~$\mc W$. %We will use Gelbrich ambiguity set in \eqref{Gelbrich ambiguity set} to approximate the distributionally robust control problem \eqref{DRCP with purified obs} in Section \ref{sec: Affine Policies} and its dual in Section \ref{sec: Dual Problem}.
To finalize our construction of the upper bound on $p^\star$, we focus on linear policies\footnote{Technically, the policies are affine because they  include a constant term, but we retain the more common terminology that focuses on the dependencies.} of the form 
$u = q + U   \eta = q + U(Dw + v)$, where $q = (q_0, \dots, q_{T-1})$, and
$U$ is a block lower triangular matrix
\begin{equation}
    \label{eq:block-lower-triangular}
    \begin{aligned}
    U = \begin{bmatrix}
    U_{0,0}  & & & \\
    U_{1,0} & U_{1,1} & & \\
    \vdots & &\ddots & \\
    U_{T-1,0} &\dots &\dots &U_{T-1,T-1} 
\end{bmatrix}.
    \end{aligned}
\end{equation}
The block lower triangularity of~$U$ ensures that the corresponding controller is causal, which in turn ensures that~$u\in\mc U_\eta$. In the following, we denote by $\mathcal U$ the set of all block lower triangular matrices of the form~\eqref{eq:block-lower-triangular}. An upper bound on problem~\eqref{DRCP} can now be obtained by {\em restricting} the controller's feasible set to causal controllers that are {\em linear} in the purified observations~$\eta$ and by {\em relaxing} nature's feasible set to the Gelbrich hull~$\mc G$ of~$\mc W$. The resulting bounding problem is given by
\begin{equation}
\overline p^\star=\left\{
    \begin{array}{cll}
     \min\limits_{U, q,   x,   u} &\max\limits_{\mathbb P \in \mathcal{G}} \;\mathbb E_{\mathbb P} \left[   u^\top R   u +   x^\top Q   x \right]\\
    \text{s.t.} & U \in \mathcal U, \;\; u = q + U(Dw + v), \;\; {x} = H u + G w.
    \end{array}\right.
    \label{upper bound problem 1}
\end{equation}
As we obtained~\eqref{upper bound problem 1} by restricting the feasible set of the outer minimization problem and relaxing the feasible set of the inner maximization problem in~\eqref{DRCP}, it is clear that~$\overline p^\star\geq p\opt$. Recall also that problem~\eqref{DRCP} constitutes an infinite-dimensional zero-sum game, where the agents optimize over measurable policies and non-parametric distributions, respectively. In contrast, the next proposition shows that problem~\eqref{upper bound problem 1} is equivalent to a finite-dimensional zero-sum game.

\begin{proposition}\label{prop: min max SDP upper bound problem}
Problem~\eqref{upper bound problem 1} is equivalent to the optimization problem
%\begin{equation}
%\overline p^\star=\left\{ \hspace{-2ex}
%\begin{array}{llr}
%    \min\limits_{\substack{\;\;\;q \in \R^{pT}\\ U \in \mathcal U}} \max\limits_{\substack{W \in \mc G_W\\ %V \in \mc G_V}} \hspace{-2.1ex}
%    &{\rm{Tr}}\left(\big(D^\top U^\top(R \!+\! H^\top Q H)UD \!+\! G^\top Q G \big) W \right) \!+\! q^\top (R \!+\! H^\top Q H) q  \\[-2ex]
%    &\hspace{0.6cm}+ 2\operatorname{Tr}(G^\top Q HUD W) + {\rm{Tr}}\left(\big(U^\top (R + H^\top QH) U  \big) V \right) ,
%\end{array}\right.
%\label{eq: distributionally robust control problem -- affine simplified min max}
%\end{equation}
\begin{equation}
\overline p^\star=\left\{ \hspace{-2ex}
\begin{array}{llr}
    \min\limits_{\substack{\;\;\;q \in \R^{pT}\\ U \in \mathcal U}} \max\limits_{\substack{W \in \mc G_W\\ V \in \mc G_V}} \hspace{-2.1ex}
    &{\rm{Tr}}\left(\big(D^\top U^\top(R \!+\! H^\top Q H)UD \!+ 2G^\top Q HUD +\! G^\top Q G \big) W \right)  \\[-2ex]
    &\hspace{0.6cm}+ {\rm{Tr}}\left(\big(U^\top (R + H^\top QH) U  \big) V \right) \!+\! q^\top (R \!+\! H^\top Q H) q,
\end{array}\right.
\label{eq: distributionally robust control problem -- affine simplified min max}
\end{equation}
where
\begin{equation*}
\begin{aligned}
    \mathcal G_{W} &= \left\{ W \in \mathbb S_+^{n(T+1)} :\begin{array}{ll} W = \operatorname{diag}(X_0, W_0, \dots, W_{T-1}), \;X_0 \in \mathbb S_+^{n}, \; W_t \in \mathbb S_+^{n} \;\;\forall t \in [T-1] \\
    {\color{black}\mathds G}(X_0, \hat{X}_0)^2 \leq \rho_{x_0}^2, \;\;{\color{black}\mathds G} (W_t, \hat{W}_t)^2 \leq \rho_{w_t}^2 \;\;\forall t \in [T-1]\end{array}\right\}\\
    \mathcal G_{V} & = \left\{\begin{array}{ll}V \in \mathbb S_+^{pT} : &V = \operatorname{diag}(V_0, \dots, V_{T-1}), \; V_t \in \mathbb S_{+}^{p}, \;\;{\color{black}\mathds G} (V_t, \hat{V}_t)^2 \leq \rho_{v_t}^2 \;\;\forall t \in [T-1]\end{array}\right\}.
\end{aligned}
\end{equation*}
\end{proposition}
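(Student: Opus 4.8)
The plan is to exploit that the objective $\EE_{\PP}[u^\top R u + x^\top Q x]$ is quadratic in the exogenous noise, so that it depends on $\PP$ only through its first two moments; since every distribution in $\mc G$ is zero-mean, it will in fact depend only on the second-moment matrices. First I would eliminate the auxiliary variables $x$ and $u$ by substituting the linear policy and the dynamics. Using $u = q + U(Dw+v)$ together with $x = Hu + Gw$ gives $x = Hq + (HUD+G)\,w + HUv$, so that both $u$ and $x$ are affine functions of the stacked noise vectors $w$ and $v$, with a deterministic offset controlled by $q$.

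Next I would take the expectation under an arbitrary $\PP \in \mc G$. Because $\EE_{\PP}[w]=0$, $\EE_{\PP}[v]=0$, and $w$ and $v$ are independent under every $\PP \in \mc G$, all linear-in-noise terms and all $w$–$v$ cross terms vanish, leaving the constant $q^\top(R + H^\top Q H)q$ together with two quadratic forms whose expectations are traces against $W := \EE_{\PP}[ww^\top]$ and $V := \EE_{\PP}[vv^\top]$. Collecting the coefficient of $W$ yields
\[
\Tr\big((D^\top U^\top(R+H^\top Q H)UD + D^\top U^\top H^\top Q G + G^\top Q H U D + G^\top Q G)\,W\big).
\]
The two mixed terms are transposes of one another, so since $W$ is symmetric the cyclicity of the trace merges them into $2\,\Tr(G^\top Q H U D\,W)$, which recovers exactly the $W$-dependent expression in \eqref{eq: distributionally robust control problem -- affine simplified min max}. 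The coefficient of $V$ collapses similarly to $\Tr(U^\top(R+H^\top Q H)U\,V)$. This shows that the objective of \eqref{upper bound problem 1} coincides with the objective in the claimed formulation whenever $W$ and $V$ are the second-moment matrices of $\PP$.

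It then remains to identify the inner feasible set. Under any $\PP \in \mc G$ the components $x_0, w_0, \dots, w_{T-1}, v_0, \dots, v_{T-1}$ are mutually independent and zero-mean, so $W$ is block-diagonal with blocks $X_0, W_0, \dots, W_{T-1}$ and $V$ is block-diagonal with blocks $V_0, \dots, V_{T-1}$, each satisfying its Gelbrich constraint by the definition of $\mc G$; hence $W \in \mc G_W$ and $V \in \mc G_V$. Conversely, for any $W \in \mc G_W$ and $V \in \mc G_V$ I would exhibit a distribution in $\mc G$ attaining these moments, namely the product of independent zero-mean Gaussians with the prescribed block covariances, which lies in $\mc G$ precisely because the defining constraints of $\mc G$ restrict only means and covariances. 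Thus the map $\PP \mapsto (W,V)$ is onto $\mc G_W \times \mc G_V$, and since the objective depends on $\PP$ only through $(W,V)$, the inner maximization over $\PP \in \mc G$ is equivalent to the maximization over $W \in \mc G_W$ and $V \in \mc G_V$. Combining this with the reduced objective gives the stated equivalence.

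The only genuinely delicate point is this surjectivity of the second-moment map: one must confirm that the Gelbrich-ball constraints on covariances are exactly the image of $\mc G$ under $\PP \mapsto (W,V)$, which hinges on the fact that $\mc G$ is specified purely through first and second moments and therefore always contains a Gaussian with any admissible block covariance. The remaining work, namely the substitution and the trace bookkeeping, including the factor-of-two merge of the mixed terms, is routine but must be carried out carefully so that the coefficient matrices match those stated in \eqref{eq: distributionally robust control problem -- affine simplified min max}.
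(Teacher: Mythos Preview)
Your proposal is correct and follows essentially the same approach as the paper: substitute the affine expressions for $u$ and $x$, use zero means and the independence of $w$ and $v$ to reduce the expectation to traces against $W$ and $V$, and then argue that the range of $\PP\mapsto(W,V)$ over $\mc G$ is exactly $\mc G_W\times\mc G_V$ by constructing the product Gaussian for the surjectivity direction. The paper's proof is organized slightly differently (it frames the last step as showing that an intermediate problem with decision variables $(W,V,\PP)$ is an exact relaxation of~\eqref{eq: distributionally robust control problem -- affine simplified min max}), but the content is the same.
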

We emphasize that Proposition~\ref{prop: min max SDP upper bound problem} remains valid even if the nominal distribution~$\hat \PP$ fails to be normal. Note also that, while nature's feasible set in~\eqref{upper bound problem 1} is non-convex due to the independence conditions, the sets~$\mathcal G_{W}$ and~$\mathcal G_{V}$ are convex and even semidefinite representable thanks to the properties of the squared Gelbrich distance.\footnote{Note that the ambiguity sets $\mc G_W$ and $\mc G_V$ appearing in~\eqref{eq: distributionally robust control problem -- affine simplified min max} involve the squared Gelbrich distance, $\mathds G(\Sigma_1, \Sigma_2)^2$. The reason is that~$\mathds G(\Sigma_1, \Sigma_2)^2$ is known to be jointly convex in~$\Sigma_1,\Sigma_2$ and semidefinite representable \cite[Proposition 2.3]{nguyen2023bridging}, unlike the Gelbrich distance~$\mathds G (\Sigma_1, \Sigma_2)$ itself, which is generally non-convex.} By dualizing the inner maximization problem, one can therefore reformulate the minimax problem~\eqref{eq: distributionally robust control problem -- affine simplified min max} as a convex semidefinite program (SDP). Even though this SDP is computationally tractable in theory, it involves $\mc O(T(mp+n^2+p^2))$ decision variables. For practically interesting problem dimensions, it thus quickly exceeds the capabilities of existing solvers.

\paragraph{Lower Bound for $d^\star$.} %To derive a tractable lower bound on $d^\star$, we first replace the set~$\mc U_y$ of causal control inputs with its equivalent reformulation~$\mc U_\eta$ based on the purified observations. 
To derive a tractable lower bound on $d^\star$, we restrict nature's feasible set to the family~$\mc W_{\mc N}$ of all {\em normal} distributions in the Wasserstein ambiguity set~$\mc W$. The resulting bounding problem is thus given by
\begin{equation}\label{eq: dual distributionally robust control problem restriction}
    \underline{d}^\star = \left\{
    \begin{array}{ccl} \max\limits_{\mathbb P \in \mathcal{W}_{\mathcal N}} &\min\limits_{x,   u} &\mathbb E_{\mathbb P} \left[   u^\top R   u +   x^\top Q  x \right]\\
    &\text{s.t.} &  u \in \mathcal U_{\eta}, \;\;\;{x} = H u + G w.
    \end{array}\right.
\end{equation}
As we obtained~\eqref{eq: dual distributionally robust control problem restriction} by restricting the feasible set of the outer maximization problem in~\eqref{DRCPdual}, it is clear that $\underline{d}^\star \leq d\opt$. Next, we show that~\eqref{eq: dual distributionally robust control problem restriction} can be recast as a finite-dimensional zero-sum game. This result critically relies on the following known fact regarding the 2-Wasserstein distance between two {\em normal} distributions, which coincides with the Gelbrich distance between their covariance matrices.
\begin{proposition}[Tightness for normal distributions {\cite[Proposition~7]{givens1984class}}]\label{prop: tightness for normal distributions}
For any two normal distributions $\mathbb P_1 = \mathcal N(0, \Sigma_1)$ and $\mathbb P_2 = \mathcal N(0, \Sigma_2)$ with zero means we have
$\mathds W (\mathbb P_1, \mathbb P_2) = \mathds G (\Sigma_1, \Sigma_2)$.
\end{proposition}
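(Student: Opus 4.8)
The plan is to prove the two matching inequalities separately. The inequality $\mathds W(\mathbb P_1,\mathbb P_2) \ge \mathds G(\Sigma_1,\Sigma_2)$ is immediate from Proposition~\ref{WassersteinSubsetGelbrich}, since $\mathbb P_1 = \mathcal N(0,\Sigma_1)$ and $\mathbb P_2 = \mathcal N(0,\Sigma_2)$ are in particular zero-mean distributions with covariances $\Sigma_1$ and $\Sigma_2$. Hence the entire content of the proposition is the reverse inequality $\mathds W \le \mathds G$, and for this it suffices to exhibit a \emph{single} coupling $\pi \in \Pi(\mathbb P_1,\mathbb P_2)$ whose transport cost equals $\mathds G(\Sigma_1,\Sigma_2)^2$; the lower bound will then force equality and, as a byproduct, certify that this coupling is optimal.

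Assuming first that $\Sigma_1 \succ 0$, I would use the symmetric linear (Monge) map $T = \Sigma_1^{-1/2}(\Sigma_1^{1/2}\Sigma_2\Sigma_1^{1/2})^{1/2}\Sigma_1^{-1/2} \in \mathbb S_+^d$. If $\xi_1 \sim \mathbb P_1$, then $\xi_2 := T\xi_1$ is centered Gaussian with covariance $T\Sigma_1 T = \Sigma_1^{-1/2}(\Sigma_1^{1/2}\Sigma_2\Sigma_1^{1/2})\Sigma_1^{-1/2} = \Sigma_2$, so the joint law of $(\xi_1, T\xi_1)$ is a valid element of $\Pi(\mathbb P_1,\mathbb P_2)$. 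Its cost is $\mathbb E\|\xi_1 - T\xi_1\|_2^2 = \operatorname{Tr}(\Sigma_1) + \operatorname{Tr}(T\Sigma_1 T) - 2\operatorname{Tr}(T\Sigma_1) = \operatorname{Tr}(\Sigma_1) + \operatorname{Tr}(\Sigma_2) - 2\operatorname{Tr}(T\Sigma_1)$, and cyclicity of the trace gives $\operatorname{Tr}(T\Sigma_1) = \operatorname{Tr}((\Sigma_1^{1/2}\Sigma_2\Sigma_1^{1/2})^{1/2})$. Since $\Sigma_1^{1/2}\Sigma_2\Sigma_1^{1/2}$ and $\Sigma_2^{1/2}\Sigma_1\Sigma_2^{1/2}$ share the spectrum of $\Sigma_1\Sigma_2$ (the products $\Sigma_1\Sigma_2$ and $\Sigma_2\Sigma_1$ have identical characteristic polynomials), this trace equals $\operatorname{Tr}((\Sigma_2^{1/2}\Sigma_1\Sigma_2^{1/2})^{1/2})$, so the cost is exactly $\mathds G(\Sigma_1,\Sigma_2)^2$. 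Equivalently, one may optimize $\operatorname{Tr}(K)$ over centered Gaussian couplings parametrized by their cross-covariance $K$ subject to $\bigl[\begin{smallmatrix}\Sigma_1 & K \\ K^\top & \Sigma_2\end{smallmatrix}\bigr]\succeq 0$; a Schur-complement and trace-maximization argument returns the same optimal value $\operatorname{Tr}((\Sigma_2^{1/2}\Sigma_1\Sigma_2^{1/2})^{1/2})$, and hence a coupling of cost $\mathds G(\Sigma_1,\Sigma_2)^2$.

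The main obstacle is the degenerate case in which $\Sigma_1$ or $\Sigma_2$ is singular, since then $\Sigma_1^{-1/2}$ and the Schur complement are undefined. I would resolve this by a continuity argument: apply the construction above to the regularized covariances $\Sigma_i + \varepsilon I \succ 0$ to obtain couplings $\pi_\varepsilon$ of cost $\mathds G(\Sigma_1 + \varepsilon I, \Sigma_2 + \varepsilon I)^2$, whose marginals $\mathcal N(0, \Sigma_i + \varepsilon I)$ converge weakly to $\mathbb P_1, \mathbb P_2$ and whose second moments stay uniformly bounded, so that $\{\pi_\varepsilon\}$ is tight. Extracting a weakly convergent subsequence $\pi_\varepsilon \to \pi_0 \in \Pi(\mathbb P_1,\mathbb P_2)$ and invoking lower semicontinuity of the quadratic transport cost together with continuity of $\Sigma \mapsto \Sigma^{1/2}$ on $\mathbb S_+^d$ yields $\mathds W(\mathbb P_1,\mathbb P_2)^2 \le \int \|\xi_1-\xi_2\|_2^2\,\mathrm d\pi_0 \le \lim_{\varepsilon\downarrow 0}\mathds G(\Sigma_1 + \varepsilon I, \Sigma_2 + \varepsilon I)^2 = \mathds G(\Sigma_1,\Sigma_2)^2$. (Alternatively, one can replace $\Sigma_1^{-1/2}$ by the Moore--Penrose pseudoinverse and verify $T\Sigma_1 T = \Sigma_2$ on the range of $\Sigma_1$ directly.) Combining this with the lower bound from Proposition~\ref{WassersteinSubsetGelbrich} yields $\mathds W(\mathbb P_1,\mathbb P_2) = \mathds G(\Sigma_1,\Sigma_2)$.
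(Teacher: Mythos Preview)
Your proof is correct. The paper itself does not prove this proposition; it simply invokes it as a known result with a citation to \cite[Proposition~7]{givens1984class}. Your argument---lower bound from Proposition~\ref{WassersteinSubsetGelbrich}, matching upper bound via the explicit Monge map $T=\Sigma_1^{-1/2}(\Sigma_1^{1/2}\Sigma_2\Sigma_1^{1/2})^{1/2}\Sigma_1^{-1/2}$ when $\Sigma_1\succ0$, and a regularization/limit argument for the singular case---is the standard self-contained route to this fact, and all the trace computations and the spectrum-matching step are carried out correctly. So you have supplied more than the paper does here; there is nothing to correct.
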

With this, we can provide a finite-dimensional reformulation, as summarized in the next result.
\begin{proposition}
Problem \eqref{eq: dual distributionally robust control problem restriction} is equivalent to the optimization problem
\begin{equation}
\underline d^\star=\left\{
\begin{array}{llr}
    \max\limits_{\substack{W \in \mc G_W\\ V \in \mc G_V}} \hspace{-0.3ex}\min\limits_{\substack{\;\;\;q \in \R^{pT}\\ U \in \mathcal U}} \hspace{-2.1ex}
    &{\rm{Tr}}\left(\big(D^\top U^\top(R \!+\! H^\top Q H)UD \!+ 2G^\top Q HUD +\! G^\top Q G \big) W \right)  \\[-2ex]
    &\hspace{0.6cm}+ {\rm{Tr}}\left(\big(U^\top (R + H^\top QH) U  \big) V \right) \!+\! q^\top (R + H^\top Q H) q,
\end{array}\right.
\label{eq: distributionally robust control problem -- affine simplified max min}
\end{equation}
where $\mathcal G_{W}$ and $\mathcal G_{V}$ are defined exactly as in Proposition~\ref{prop: min max SDP upper bound problem}. 
\label{prop:dual-sdp}
\end{proposition}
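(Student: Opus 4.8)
The plan is to mirror the structure used for the upper bound in Proposition~\ref{prop: min max SDP upper bound problem}, but with the roles of the two reductions swapped: here nature is already restricted (to normal distributions), so it is the \emph{inner} minimization over causal controllers that must be shown to collapse onto linear policies. I would proceed in four steps.

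First, I would parametrize the restricted ambiguity set $\mc W_{\mc N}$. Every $\PP \in \mc W_{\mc N}$ is a normal distribution under which $x_0, w_0, \ldots, w_{T-1}, v_0, \ldots, v_{T-1}$ are independent, so it factorizes as a product of zero-mean Gaussians $\mc N(0, X_0) \otimes (\otimes_t \mc N(0, W_t)) \otimes (\otimes_t \mc N(0, V_t))$ and is uniquely determined by the covariance tuple $(X_0, \{W_t\}, \{V_t\})$. By Proposition~\ref{prop: tightness for normal distributions}, the marginal membership constraints $\mathds W(\hat\PP_{x_0}, \PP_{x_0}) \le \rho_{x_0}$, $\mathds W(\hat\PP_{w_t}, \PP_{w_t}) \le \rho_{w_t}$, $\mathds W(\hat\PP_{v_t}, \PP_{v_t}) \le \rho_{v_t}$ are \emph{equivalent} to the Gelbrich constraints $\mathds G(X_0, \hat X_0) \le \rho_{x_0}$, $\mathds G(W_t, \hat W_t) \le \rho_{w_t}$, $\mathds G(V_t, \hat V_t) \le \rho_{v_t}$. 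Collecting these covariances into block-diagonal matrices $W = \operatorname{diag}(X_0, W_0, \ldots, W_{T-1})$ and $V = \operatorname{diag}(V_0, \ldots, V_{T-1})$ then establishes a bijection between $\mc W_{\mc N}$ and $\mc G_W \times \mc G_V$, so that the outer maximization over $\PP \in \mc W_{\mc N}$ becomes a maximization over $(W, V) \in \mc G_W \times \mc G_V$.

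Second, I would fix such a normal $\PP$ and analyze the inner problem $\min_{u \in \mc U_\eta,\, x = Hu + Gw} \EE_\PP[u^\top R u + x^\top Q x]$. Since $\PP$ is Gaussian and $\eta = Dw + v$ is an affine function of jointly Gaussian, zero-mean noise, this is exactly a classic LQG problem written in purified form. Invoking the classic LQG result reviewed in Appendix~\S\ref{sec: appx: optimal-solution-classic-LQG}, where optimality is attained by a certainty-equivalent Kalman-filter controller that is linear in the observations, together with the identity $\mc U_\eta = \mc U_y$ and the affine, information-preserving relation between $y$ and $\eta$, the optimal causal controller can be written as $u = q + U\eta$ with $U \in \mc U$ block lower triangular (causality) and $q$ a constant. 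Thus the infinite-dimensional inner minimization over $\mc U_\eta$ coincides with the finite-dimensional minimization over $(q, U) \in \R^{pT} \times \mc U$. I would then substitute $u = q + U(Dw + v)$ and $x = Hu + Gw$ and take the expectation; because $w$ and $v$ are independent and zero-mean with covariances $W$ and $V$, all cross terms vanish and the objective collapses to the trace expression in \eqref{eq: distributionally robust control problem -- affine simplified max min} plus the term $q^\top(R + H^\top Q H)q$, a computation identical to the one behind Proposition~\ref{prop: min max SDP upper bound problem}. Combining the steps yields $\underline d^\star = \max_{(W,V) \in \mc G_W \times \mc G_V} \min_{(q, U)} (\cdots)$, which is precisely \eqref{eq: distributionally robust control problem -- affine simplified max min}.

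The main obstacle is justifying that the inner minimization over \emph{all} measurable causal policies is attained by a linear one. While this is the textbook LQG conclusion, care is needed to ensure the classic result applies to the arbitrary (but Gaussian) covariances $W, V$ selected by nature, rather than only to the nominal $\hat W_t, \hat V_t$, and to transfer optimality from the observation parametrization $y$ to the purified parametrization $\eta$ without losing causality (i.e.\ retaining block lower triangularity of $U$). By contrast, the parametrization of $\mc W_{\mc N}$ via Proposition~\ref{prop: tightness for normal distributions} and the trace algebra are routine, the latter being a verbatim repetition of the computation establishing Proposition~\ref{prop: min max SDP upper bound problem}.
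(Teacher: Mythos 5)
Your proposal is correct and follows essentially the same route as the paper: both reduce the inner problem for each fixed normal distribution to a classic LQG problem solved by a linear output-feedback controller, convert that controller to a causal linear purified-output-feedback form (the paper does this via its Lemma~\ref{lemma:linear-rel-u-eta}, which is exactly the causality-preserving $y\leftrightarrow\eta$ translation you flag as the delicate step), invoke Proposition~\ref{prop: tightness for normal distributions} to turn Wasserstein constraints into Gelbrich constraints, and finish with the same trace computation as in Proposition~\ref{prop: min max SDP upper bound problem}. The only cosmetic difference is that you phrase the correspondence between $\mc W_{\mc N}$ and $\mc G_W\times\mc G_V$ as a bijection, whereas the paper phrases it as an exact relaxation; these are equivalent here.
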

Proposition~\ref{prop:dual-sdp} relies on Proposition~\ref{prop: tightness for normal distributions} and thus fails to hold unless~$\hat\PP$ is normal. Also, one can again reformulate~\eqref{eq: distributionally robust control problem -- affine simplified max min} as a tractable SDP by dualizing the inner minimization problem. 

\paragraph{Conclusions.} Propositions~\ref{prop: min max SDP upper bound problem} and~\ref{prop:dual-sdp} reveal that problems~\eqref{eq: distributionally robust control problem -- affine simplified min max} and~\eqref{eq: distributionally robust control problem -- affine simplified max min} are dual to each other, that is, they can be transformed into one another by interchanging minimization and maximization. The following main theorem shows that strong duality holds irrespective of the problem data.

\begin{theorem}[Strong duality of~\eqref{eq: distributionally robust control problem -- affine simplified min max} and~\eqref{eq: distributionally robust control problem -- affine simplified max min}]
We have
$\overline p^\star = \underline d^\star$.
\label{theorem:lower-equal-upper}
\end{theorem}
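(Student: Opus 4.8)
The plan is to exploit the fact that the two problems \eqref{eq: distributionally robust control problem -- affine simplified min max} and \eqref{eq: distributionally robust control problem -- affine simplified max min} share the \emph{same} objective, which I abbreviate by $f(q,U,W,V)$, and differ only in the order in which one minimizes over $(q,U)\in\R^{pT}\times\mathcal U$ and maximizes over $(W,V)\in\mathcal G_W\times\mathcal G_V$. Thus $\overline p^\star$ is an $\inf$-$\sup$ and $\underline d^\star$ a $\sup$-$\inf$ of one and the same saddle function, and the weak minimax inequality already gives $\overline p^\star\ge\underline d^\star$. To obtain the reverse inequality I would invoke a classical minimax theorem (Sion's), after verifying that $f$ is a convex--concave saddle function over convex feasible sets, one of which is compact.

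First I would record the structural properties of $f$. Writing $M=R+H^\top QH$, the assumptions $R\succ 0$ and $Q\succeq 0$ give $M\succ 0$. For fixed $(W,V)$ the objective equals $\operatorname{Tr}\!\big(U^\top M U(DWD^\top+V)\big)+2\operatorname{Tr}(G^\top QHUDW)+q^\top Mq+\operatorname{Tr}(G^\top QGW)$; the map $U\mapsto\operatorname{Tr}(U^\top M U N)$ equals $\operatorname{vec}(U)^\top(N\otimes M)\operatorname{vec}(U)$ with $N=DWD^\top+V\succeq 0$ and $M\succ 0$, hence is convex, the remaining trace term is linear in $U$, and $q\mapsto q^\top Mq$ is convex. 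Since there are no cross terms between $q$ and $U$, the function $f(\cdot,\cdot,W,V)$ is jointly convex (indeed a convex quadratic) on $\R^{pT}\times\mathcal U$. For fixed $(q,U)$, $f$ is \emph{affine} in $(W,V)$, hence concave, and being a polynomial $f$ is continuous throughout.

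Next I would verify the feasibility hypotheses. The sets $\mathcal U$ and $\R^{pT}$ are convex (linear subspaces), and $\mathcal G_W,\mathcal G_V$ are convex because the squared Gelbrich distance is jointly convex and semidefinite representable (cf.\ \cite[Proposition~2.3]{nguyen2023bridging}); their product is therefore convex. Crucially, this product is also \emph{compact}: it is closed (an intersection of the closed positive semidefinite cone with the closed Gelbrich-ball constraints) and bounded, since a bounded Gelbrich distance to the fixed nominal covariances bounds the traces of the admissible covariance blocks (via the triangle inequality for the underlying Wasserstein metric) and positive semidefiniteness then bounds their spectral norms. With the compact convex maximization domain $\mathcal G_W\times\mathcal G_V$, the convex minimization domain $\R^{pT}\times\mathcal U$, and $f$ continuous, concave in $(W,V)$ and convex in $(q,U)$, all hypotheses of Sion's minimax theorem hold, yielding $\inf_{q,U}\max_{W,V}f=\max_{W,V}\inf_{q,U}f$; identifying the two sides with $\overline p^\star$ and $\underline d^\star$ gives $\overline p^\star=\underline d^\star$.

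The step I expect to be the main obstacle is making this application fully rigorous in the presence of the \emph{non-compact} minimization domain $\R^{pT}\times\mathcal U$. Sion's theorem requires only one side to be compact, so the abstract equality of optimal \emph{values} is immediate, but one must still argue that the inner infima are finite and, if the statement is to be read with genuine minima, that they are attained. Finiteness follows from the nonnegativity of $f$ (it represents an expected quadratic cost). Attainment would follow from coercivity of the convex quadratic $f(\cdot,\cdot,W,V)$, which holds in $q$ because $M\succ 0$ but may fail in $U$ when $DWD^\top+V$ is singular; this does not affect the value equality $\overline p^\star=\underline d^\star$ asserted by the theorem, but it is the point demanding the most care.
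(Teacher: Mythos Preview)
Your proposal is correct and follows essentially the same route as the paper: both invoke Sion's minimax theorem after noting that the common objective is convex in $(q,U)$, concave (indeed affine) in $(W,V)$, and that $\mathcal G_W\times\mathcal G_V$ is convex and compact (the paper cites \cite[Lemma~A.6]{nguyen2023bridging} for this). Your verification of the convexity--concavity structure is in fact more careful than the paper's, which somewhat loosely calls the objective ``bilinear'' in the two groups of variables; and your remark that attainment of the inner infimum is not needed for the value equality is well taken.
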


Theorem~\ref{theorem:lower-equal-upper} follows immediately from Sion's classic  minimax theorem~\cite{ref:sion1958minimax}, which applies because~$\mc G_W$ and~$\mc G_V$ are convex as well as compact thanks to~\cite[Lemma~{\color{black}A.6}]{nguyen2023bridging}. 

By weak duality and the construction of the bounding problems~\eqref{eq: distributionally robust control problem -- affine simplified min max} and~\eqref{eq: distributionally robust control problem -- affine simplified max min}, we trivially have~$\underline d^\star\leq d^\star\leq p^\star\leq \overline p^\star$. Theorem~\ref{theorem:lower-equal-upper} reveals that all of these inequalities are in fact equalities, each of which gives rise to a non-trivial insight. The first key insight is that~\eqref{DRCP} and~\eqref{DRCPdual} are strong duals.

\begin{corollary}[Strong duality of~\eqref{DRCP} and~\eqref{DRCPdual}]
    We have $p\opt = d\opt$.
    \label{corollary:strong-duality}
\end{corollary}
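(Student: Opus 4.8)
The plan is to establish $p^\star = d^\star$ purely by a squeeze (sandwich) argument, leveraging the chain of inequalities already assembled in the excerpt together with Theorem~\ref{theorem:lower-equal-upper}. Since all of the analytic heavy lifting has been carried out in Propositions~\ref{prop: min max SDP upper bound problem} and~\ref{prop:dual-sdp} (the finite-dimensional reformulations) and in Theorem~\ref{theorem:lower-equal-upper} (their strong duality via Sion's theorem), the corollary should follow immediately, and the only thing left to do is to verify that the four quantities $\underline d^\star,\, d^\star,\, p^\star,\, \overline p^\star$ are correctly ordered and that the two outer ones coincide.

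First I would recall the weak-duality chain $\underline d^\star \le d^\star \le p^\star \le \overline p^\star$ and re-derive each link. The middle inequality $d^\star \le p^\star$ is the classic minimax inequality applied to the infinite-dimensional zero-sum game~\eqref{DRCP}--\eqref{DRCPdual}, i.e.\ min-max dominates max-min. The right inequality $p^\star \le \overline p^\star$ holds because~\eqref{upper bound problem 1} is obtained from~\eqref{DRCP} by \emph{simultaneously} shrinking the controller's feasible set (to causal policies that are linear in $\eta$) and enlarging nature's feasible set (from $\mathcal W$ to its Gelbrich hull $\mathcal G \supseteq \mathcal W$, by Corollary~\ref{cor: Gelbrich outer approximation}); restricting the outer minimizer and relaxing the inner maximizer can only raise the optimal value. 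Symmetrically, the left inequality $\underline d^\star \le d^\star$ holds because~\eqref{eq: dual distributionally robust control problem restriction} is obtained from~\eqref{DRCPdual} by restricting the outer maximizer to the normal distributions $\mathcal W_{\mathcal N} \subseteq \mathcal W$, which can only lower the optimal value.

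Then I would invoke Theorem~\ref{theorem:lower-equal-upper}, which furnishes $\overline p^\star = \underline d^\star$. Substituting this into the chain gives $\underline d^\star \le d^\star \le p^\star \le \overline p^\star = \underline d^\star$, which forces every inequality in the chain to hold with equality. In particular the two middle terms coincide, i.e.\ $p^\star = d^\star$, which is precisely the claim. (As a byproduct the same collapse yields $\underline d^\star = d^\star = p^\star = \overline p^\star$, so the restriction to linear policies and the relaxation to the Gelbrich hull are both exact; this is what later underpins the existence of a Nash equilibrium.)

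I do not expect any genuine obstacle here, since the corollary is a direct consequence of the sandwich and all the substantive work lives upstream in the reformulation propositions and in the application of Sion's minimax theorem. The only point worth double-checking is the \emph{direction} of each of the three defining inequalities---confirming that ``restrict the minimizer / relax the maximizer'' raises the value while ``restrict the maximizer'' lowers it---because a single sign slip there would break the squeeze.
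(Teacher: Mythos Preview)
Your proposal is correct and follows exactly the paper's approach: the paper establishes the chain $\underline d^\star \le d^\star \le p^\star \le \overline p^\star$ by weak duality and the construction of the bounding problems, and then uses Theorem~\ref{theorem:lower-equal-upper} to collapse all inequalities to equalities, from which Corollary~\ref{corollary:strong-duality} is read off directly.
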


We stress that, unlike Theorem~\ref{theorem:lower-equal-upper}, Corollary~\ref{corollary:strong-duality} establishes strong duality between two {\em infinite-dimensional} zero-sum games. The second key implication of Theorem~\ref{theorem:lower-equal-upper} is that the distributionally robust LQG problem~\eqref{DRCP} is solved by a linear output-feedback controller.

\begin{corollary}[The controller's Nash strategy is linear in the observations]
    \label{corollary:affine-controllers-are-optimal}
    There exist~$U\opt\in\mc U$ and~$q\opt\in\mathbb R^m$ such that the distributionally robust LQG problem~\eqref{DRCP} is solved by~$u\opt=q\opt+U\opt y$.
\end{corollary}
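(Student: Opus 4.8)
The plan is to exploit the tight chain of equalities delivered by Theorem~\ref{theorem:lower-equal-upper} and Corollary~\ref{corollary:strong-duality}, and then to transfer an optimal \emph{linear} controller from the Gelbrich-relaxed upper-bound problem back to the original problem~\eqref{DRCP} via a sandwich argument. Concretely, combining the weak-duality chain $\underline d^\star \leq d^\star \leq p^\star \leq \overline p^\star$ with $p^\star = d^\star$ (Corollary~\ref{corollary:strong-duality}) and $\overline p^\star = \underline d^\star$ (Theorem~\ref{theorem:lower-equal-upper}) forces all four quantities to coincide; in particular $p^\star = \overline p^\star$. It thus suffices to exhibit one linear controller whose worst-case cost over $\mc W$ equals $\overline p^\star$.

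First I would establish that the outer minimization in the finite-dimensional reformulation~\eqref{eq: distributionally robust control problem -- affine simplified min max} is attained. Since $\mc G_W$ and $\mc G_V$ are compact, the inner maximum defines a finite, continuous function of $(q,U)$, and being a pointwise maximum of affine functions of $(q,U)$ it is convex. Coercivity then follows because the objective dominates the decoupled term $q^\top (R + H^\top Q H) q$ and, evaluating the inner maximum at the nominal covariances, dominates $\Tr\!\big(U^\top (R + H^\top Q H) U\,\hat V\big)$; as $R + H^\top Q H \succ 0$ (because $R \succ 0$) and the nominal $\hat V \succ 0$, both contributions grow quadratically in $\|(q,U)\|$. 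A convex, coercive, lower semicontinuous function attains its infimum, which yields a minimizer $(q^\star, U^\star)$ with $U^\star \in \mc U$ block lower triangular.

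Next I would run the sandwich argument on the induced controller $u^\star = q^\star + U^\star \eta$. Writing $f(u) = \max_{\PP \in \mc W}\EE_\PP[u^\top R u + x^\top Q x]$ and $g(u) = \max_{\PP \in \mc G}\EE_\PP[u^\top R u + x^\top Q x]$, Corollary~\ref{cor: Gelbrich outer approximation} gives $\mc W \subseteq \mc G$ and hence $f(u^\star) \leq g(u^\star)$. Because $(q^\star, U^\star)$ attains the outer minimum of the upper-bound problem~\eqref{upper bound problem 1}, we have $g(u^\star) = \overline p^\star = p^\star$, so $f(u^\star) \leq p^\star$. On the other hand, $u^\star$ is a causal linear policy, hence $u^\star \in \mc U_\eta = \mc U_y$ is feasible for~\eqref{DRCP}, which gives $f(u^\star) \geq \min_{u \in \mc U_\eta} f(u) = p^\star$. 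The two bounds pin down $f(u^\star) = p^\star$, so $u^\star$ solves~\eqref{DRCP}. Finally, since the purified observations $(\eta_0,\dots,\eta_t)$ and the actual observations $(y_0,\dots,y_t)$ are related by a causal linear bijection (as established in the purified-observation construction), $u^\star$ is an affine, causally structured function of $y$, which yields the asserted form $u^\star = q^\star + U^\star y$ after the corresponding change of variables.

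The main obstacle is the mismatch between the enlarged set $\mc G$ used to build $\overline p^\star$ and the true set $\mc W$ appearing in~\eqref{DRCP}: optimality of $u^\star$ for the Gelbrich-relaxed game does not, a priori, imply optimality for the Wasserstein game. This is precisely what the chain of equalities resolves, since only because $p^\star = \overline p^\star$ does the inequality $f(u^\star) \leq g(u^\star)$ become an equality at the optimum, collapsing the relaxation gap for this particular controller. A secondary technical point, easy to overlook, is the attainment of the outer minimizer: this is not guaranteed by Sion's theorem alone (which only yields equality of the min-max and max-min values), and must be secured separately through the coercivity argument above.
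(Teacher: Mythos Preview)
Your proposal is correct and follows the paper's approach: derive $p^\star=\overline p^\star$ from the chain of equalities, conclude via a sandwich argument that an optimal linear-in-$\eta$ controller solves~\eqref{DRCP}, and then convert it to linear-in-$y$ form via the causal linear bijection between purified and original observations (this is Lemma~\ref{lemma:linear-rel-u-eta} in the paper, citing \cite[Proposition~3]{ben2006extending}). You are in fact more careful than the paper, which simply asserts that $p^\star=\overline p^\star$ ``readily implies'' the conclusion without discussing attainment; two minor caveats are that your coercivity step tacitly uses $\hat V\succ 0$ (an assumption stated only in Proposition~\ref{prop:worst-case-Kalman}, not globally), and that the pair $(q^\star,U^\star)$ in the final expression $u^\star=q^\star+U^\star y$ denotes the \emph{transformed} coefficients, not the minimizers of~\eqref{eq: distributionally robust control problem -- affine simplified min max}.
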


The identity~$p\opt=\overline p\opt$ readily implies that~\eqref{DRCP} is solved by a causal controller that is linear in the {\em purified} observations. However, any causal controller that is linear in the purified observations~$\eta$ can be reformulated {\em exactly} as a causal controller that is linear in the original observations~$y$ and vice-versa \cite[Proposition 3]{ben2006extending}. %Yet the transformation of the underlying policy parameters is highly non-convex. 
Thus, Corollary~\ref{corollary:affine-controllers-are-optimal} follows. The third key implication of Theorem~\ref{theorem:lower-equal-upper} is that the {\em dual} distributionally robust LQG problem is solved by a normal distribution.

\begin{corollary}[Nature's Nash strategy is a normal distribution]
    The dual distributionally robust LQG problem~\eqref{DRCPdual} is solved by a distribution~$\PP\opt\in\mc W_{\mc N}$.
    \label{corollary:normal-distributions-are-optimal}
\end{corollary}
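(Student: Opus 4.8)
The plan is to reduce the statement to an \emph{attainment} claim for the restricted dual problem~\eqref{eq: dual distributionally robust control problem restriction} and then invoke compactness. Theorem~\ref{theorem:lower-equal-upper} together with the sandwich $\underline d^\star \le d^\star \le p^\star \le \overline p^\star$ forces every inequality to be an equality; in particular $\underline d^\star = d^\star$. Now observe that~\eqref{eq: dual distributionally robust control problem restriction} is obtained from~\eqref{DRCPdual} merely by replacing nature's feasible set $\mc W$ with the subset $\mc W_{\mc N}\subseteq\mc W$ of normal distributions, while the inner objective $\min_{x,u}\EE_{\PP}[u^\top R u + x^\top Q x]$ is literally the same functional of $\PP$ in both problems. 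Hence, if some $\PP^\star\in\mc W_{\mc N}$ attains the optimal value $\underline d^\star$ in~\eqref{eq: dual distributionally robust control problem restriction}, then $\PP^\star$ is feasible in~\eqref{DRCPdual} and achieves there the value $\underline d^\star = d^\star$; since $d^\star$ is the supremal value of~\eqref{DRCPdual}, such a $\PP^\star$ solves~\eqref{DRCPdual}. It therefore suffices to prove that the maximum in~\eqref{eq: dual distributionally robust control problem restriction} is attained by a normal distribution.

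To this end I would first pass to the finite-dimensional reformulation. By Proposition~\ref{prop:dual-sdp}, problem~\eqref{eq: dual distributionally robust control problem restriction} shares its optimal value with the max--min problem~\eqref{eq: distributionally robust control problem -- affine simplified max min}, whose outer maximization ranges over the covariance pairs $(W,V)\in\mc G_W\times\mc G_V$. The correspondence behind this equivalence maps each feasible $(W,V)$ to the product normal distribution with these block covariances, and Proposition~\ref{prop: tightness for normal distributions} guarantees that the Gelbrich constraints defining $\mc G_W$ and $\mc G_V$ are precisely the Wasserstein constraints defining $\mc W_{\mc N}$, so that this distribution indeed lies in $\mc W_{\mc N}$. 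It thus remains to show that the outer maximum in~\eqref{eq: distributionally robust control problem -- affine simplified max min} is attained.

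The attainment follows from a compactness argument. The sets $\mc G_W$ and $\mc G_V$ are convex and compact by~\cite[Lemma~A.6]{nguyen2023bridging}, the same fact used in Theorem~\ref{theorem:lower-equal-upper}, so $\mc G_W\times\mc G_V$ is nonempty and compact. The objective of~\eqref{eq: distributionally robust control problem -- affine simplified max min} is \emph{affine} in $(W,V)$ for each fixed $(q,U)$; consequently the inner value function $(W,V)\mapsto\min_{q,U}(\cdots)$ is a pointwise infimum of affine functions and is therefore concave and upper semicontinuous. An upper semicontinuous function attains its supremum on a nonempty compact set, so there exists $(W^\star,V^\star)\in\mc G_W\times\mc G_V$ attaining $\underline d^\star$; reading off the associated product normal distribution $\PP^\star\in\mc W_{\mc N}$ and combining with the reduction of the first paragraph completes the proof.

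I expect the crux to be this attainment step, specifically the regularity of the inner value function. Because the inner minimization is carried out over the \emph{unbounded} sets $\R^{pT}$ and $\mc U$, one cannot simply invoke continuity of a minimum over a compact domain; the structural observation that rescues the argument is that the objective is affine in $(W,V)$, so the value function is an infimum of affine functions and hence automatically upper semicontinuous, irrespective of whether the inner problem is solvable. Solvability of the inner problem does in fact hold, since $R + H^\top Q H\succ 0$ renders the inner quadratic coercive, but this is not needed for the outer attainment, which closes once upper semicontinuity is paired with compactness of $\mc G_W\times\mc G_V$.
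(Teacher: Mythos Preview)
Your argument is correct and follows the same approach as the paper, which simply states that the corollary is a direct consequence of $\underline d^\star = d^\star$ and that $\PP^\star$ is determined by the maximizer $(W^\star,V^\star)$ of~\eqref{eq: distributionally robust control problem -- affine simplified max min}. You have merely made explicit the attainment step the paper leaves implicit, supplying the compactness of $\mc G_W\times\mc G_V$ (via \cite[Lemma~A.6]{nguyen2023bridging}) and the upper semicontinuity of the inner value function as an infimum of affine maps; this is exactly the right justification and requires nothing beyond what the paper already invokes elsewhere.
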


Corollary~\ref{corollary:normal-distributions-are-optimal} is a direct consequence of the identity~$\underline d^\star= d^\star$. Note that the optimal normal distribution~$\PP\opt$ is uniquely determined by the covariance matrices~$W\opt$ and~$V\opt$ of the exogenous uncertain parameters, which can be computed by solving problem~\eqref{eq: distributionally robust control problem -- affine simplified max min}. That the worst-case distribution is actually Gaussian is not a-priori expected and is surprising given that the Wasserstein ball contains many non-Gaussian distributions.

\section{Efficient Numerical Solution of Distributionally Robust LQG Problems}
\label{sec:algorithm}
Having proven these structural results, we next turn attention to the problem of finding the optimal strategies. Our next result shows that, under a mild regularity condition, the optimal controller~$u\opt$ of the distributionally robust LQG problem~\eqref{DRCP} can be computed efficiently from~$\PP\opt$.
\begin{proposition}[Optimality of Kalman filter-based feedback controllers]
\label{prop:worst-case-Kalman}
If~$\hat V_t\succ 0$ for all~$t\in[T-1]$, then problem~\eqref{DRCPdual} is solved by a Gaussian distribution~$\PP\opt$ under which~$v_t$ has a covariance matrix $V_t\opt\succ 0$ for every~$t\in[T-1]$, and~\eqref{DRCP} is solved by the optimal LQG controller corresponding to~$\PP\opt$. Additionally, the optimal value of problem~\eqref{eq: distributionally robust control problem -- affine simplified min max} and its strong dual~\eqref{eq: distributionally robust control problem -- affine simplified max min} does not change if we restrict~$\mc G_W$ and~$\mc G_V$ to~$\mc G_W^+$ and~$\mc G_V^+$, respectively, where
\begin{align*}
    \mc G_W^+ & =\left\{W %=\operatorname{diag}(X_0, W_0, \dots, W_{T-1})
    \in\mc G_W : X_0\succeq \lambda_{\min}(\hat X_0) {\color{black}I},\; W_t\succeq \lambda_{\min}(\hat W_t){\color{black}I}\; \forall t\in[T-1] \right\}, \\
    \mc G_V^+ & =\left\{V \in\mc G_V : V_t\succeq \lambda_{\min}(\hat V_t){\color{black}I}\; \forall t\in[T-1] \right\}.
\end{align*}
\end{proposition}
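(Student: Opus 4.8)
The plan is to prove the three assertions in the reverse of the stated order, because the eigenvalue floors in the ``Additionally'' part are exactly what guarantee that nature's worst-case observation-noise covariances are positive definite, which in turn makes the Kalman filter well defined and underpins the first two assertions. Throughout I work with the finite-dimensional saddle problems \eqref{eq: distributionally robust control problem -- affine simplified min max} and \eqref{eq: distributionally robust control problem -- affine simplified max min}, which by Theorem~\ref{theorem:lower-equal-upper} share the common value $\overline p\opt=\underline d\opt$. \emph{Reduction to a Gelbrich-ball covariance problem:} for fixed $(q,U)$, the inner maximization in \eqref{eq: distributionally robust control problem -- affine simplified min max} decouples completely across the diagonal blocks $X_0,W_0,\dots,W_{T-1}$ of $W$ and $V_0,\dots,V_{T-1}$ of $V$, since the objective is linear in each block through a trace, only the diagonal blocks of the coefficient matrices contribute (as $W$ and $V$ are block diagonal), and the Gelbrich constraints are imposed block by block. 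The coefficient of $V$ equals $U^\top(R+H^\top QH)U\succeq 0$ because $R\succ 0$ and $Q\succeq 0$, and the coefficient of $W$ rewrites as $D^\top U^\top RUD+(HUD+G)^\top Q(HUD+G)\succeq 0$; their diagonal blocks are positive semidefinite as principal submatrices. Each block subproblem is thus of the canonical form $\max_{\Sigma\succeq 0,\,\mathds G(\Sigma,\hat\Sigma)\le\rho}\Tr(M\Sigma)$ with $M\succeq 0$.

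\emph{Eigenvalue floor for the worst-case covariance.} The squared Gelbrich distance is convex in $\Sigma$, so stationarity of the Lagrangian yields the maximizer in closed form, $\Sigma\opt=\gamma^2(\gamma I-M)^{-1}\hat\Sigma(\gamma I-M)^{-1}$ for a multiplier with $\gamma I\succ M$ (and $\Sigma\opt=\hat\Sigma$ when $M=0$). Writing $P=\gamma(\gamma I-M)^{-1}$, the inequality $M\succeq 0$ gives $\gamma I-M\preceq\gamma I$, hence $P\succeq I$, and therefore $\Sigma\opt=P\hat\Sigma P\succeq\lambda_{\min}(\hat\Sigma)P^2\succeq\lambda_{\min}(\hat\Sigma)I$. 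Applied blockwise, this shows the inner maximum is attained at a point with $X_0\succeq\lambda_{\min}(\hat X_0)I$, $W_t\succeq\lambda_{\min}(\hat W_t)I$ and $V_t\succeq\lambda_{\min}(\hat V_t)I$, i.e. inside $\mc G_W^+\times\mc G_V^+$ (when some $\hat X_0$ or $\hat W_t$ is singular the corresponding constraint reads $\succeq 0$ and is vacuous, so no definiteness of the $W$-blocks is needed). Since this holds for every $(q,U)$, the inner maxima over $\mc G_W\times\mc G_V$ and over $\mc G_W^+\times\mc G_V^+$ coincide; taking the minimum over $(q,U)$ shows $\overline p\opt$ is unchanged under the restriction, and by Theorem~\ref{theorem:lower-equal-upper} so is $\underline d\opt$, which proves the ``Additionally'' statement.

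\emph{Worst-case distribution and optimal controller.} By Proposition~\ref{prop:dual-sdp} and Corollary~\ref{corollary:normal-distributions-are-optimal}, problem \eqref{DRCPdual} is solved by the Gaussian $\PP\opt$ whose block covariances solve \eqref{eq: distributionally robust control problem -- affine simplified max min}; by the previous step these may be taken in $\mc G_W^+\times\mc G_V^+$, so $V_t\opt\succeq\lambda_{\min}(\hat V_t)I\succ 0$ since $\hat V_t\succ 0$, giving the first assertion. For the second, Corollary~\ref{corollary:strong-duality} ($p\opt=d\opt$) together with the existence of an optimal $u\opt$ of \eqref{DRCP} (Corollary~\ref{corollary:affine-controllers-are-optimal}) and of $\PP\opt$ (Corollary~\ref{corollary:normal-distributions-are-optimal}) makes $(u\opt,\PP\opt)$ a Nash equilibrium, so $u\opt$ is a best response to $\PP\opt$, i.e. it solves $\min_{u\in\mc U_\eta,\,x=Hu+Gw}\EE_{\PP\opt}[u^\top Ru+x^\top Qx]$. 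Because $\PP\opt$ is Gaussian with $V_t\opt\succ 0$, this inner problem is exactly a classical LQG problem with non-degenerate observation noise, whose Kalman-filter-based solution is well defined; and since $R\succ 0$ the optimal control inputs are uniquely determined almost surely (the Kalman filter fixes the state estimates and the separation principle fixes the feedback gains). Hence $u\opt$ coincides with the optimal LQG controller for $\PP\opt$, which therefore solves \eqref{DRCP}.

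\emph{Main obstacle.} The crux is the eigenvalue floor $\Sigma\opt\succeq\lambda_{\min}(\hat\Sigma)I$: it requires both the observation that independence forces the block-diagonal splitting into scalar-budget matrix problems, and the closed-form maximizer $\Sigma\opt=P\hat\Sigma P$ with the sharp property $P\succeq I$ (not merely $P\succ 0$), which is precisely what turns $M\succeq 0$ into $V_t\opt\succ 0$. The remaining arguments are bookkeeping on the already-established duality results and classical LQG theory.
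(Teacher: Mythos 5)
Your proof is correct and follows essentially the same route as the paper: establish the eigenvalue floor on the worst-case covariances, use it to justify the restriction to $\mc G_W^+\times\mc G_V^+$ and the positive definiteness of $V_t\opt$, and then invoke strong duality so that $u\opt$ is the unique ($R\succ 0$) best response to the Gaussian $\PP\opt$, i.e.\ the Kalman-filter LQG controller. The only difference is that the paper imports the floor $\Sigma\opt\succeq\lambda_{\min}(\hat\Sigma)I$ directly from \cite[Lemma~A.3]{nguyen2023bridging}, whereas you re-derive it from the closed-form maximizer via $P=\gamma(\gamma I-M)^{-1}\succeq I$ — a nice self-contained addition that is consistent with the oracle formula the paper itself quotes in Appendix~\S\ref{sec:bisection}.
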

This implies that the optimal controller can be computed by solving a classic LQG problem corresponding to nature's optimal strategy $\PP^\star$, which can be done very efficiently through Kalman filtering and dynamic programming (see Appendix~\S\ref{sec: appx: optimal-solution-classic-LQG} for details). It thus suffices to design an efficient algorithm for computing~$\PP\opt$, which is uniquely determined by the covariance matrices~$(W\opt,V\opt)$ that solve problem~\eqref{eq: distributionally robust control problem -- affine simplified max min}. To this end, we first reformulate~\eqref{eq: distributionally robust control problem -- affine simplified max min} as
\begin{equation} 
    \max\limits_{W \in \mc G_W^+, V \in \mc G_V^+} f(W, V),
    \label{eq:dr-sdp-W-V}
\end{equation}
where we restrict~$\mc G_W$ and~$\mc G_V$ to~$\mc G^+_W$ and~$\mc G^+_V$, respectively, due to Proposition~\ref{prop:worst-case-Kalman}, and where~$f(W,V)$ denotes the optimal value function of the inner minimization problem in~\eqref{eq: distributionally robust control problem -- affine simplified max min}. As~\eqref{eq: distributionally robust control problem -- affine simplified max min} is a reformulation of~\eqref{eq: dual distributionally robust control problem restriction} and as the family of all causal purified output-feedback controllers matches the family of causal output-feedback controllers, $f(W,V)$ can also be viewed as the optimal value of the classic LQG problem corresponding to the normal distribution~$\PP$ determined by the covariance matrices~$W$ and~$V$. These insights lead to the following structural result.
\begin{proposition}
\label{prop:structure-of-f}
    $f(W,V)$ is concave and $\beta$-smooth in~$(W,V)\in\mc G_W^+\times \mc G_V^+$ for some $\beta > 0$.
\end{proposition}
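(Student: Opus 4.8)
The plan is to exploit the explicit quadratic structure of the inner minimization defining $f$. Since $R + H^\top Q H \succ 0$ (as $R \succ 0$ and $H^\top Q H \succeq 0$), the term $q^\top (R + H^\top Q H) q$ is minimized at $q = 0$ with value $0$, so the minimization over $q$ is trivial and $f(W,V) = \min_{U \in \mathcal U} g(U,W,V)$, where, writing $M = R + H^\top Q H$ and $S = DWD^\top$ and using cyclicity of the trace,
\begin{equation*}
g(U,W,V) = \Tr\!\big(U^\top M U (S+V)\big) + 2\,\Tr\!\big(G^\top Q H U D W\big) + \Tr\!\big(G^\top Q G\, W\big).
\end{equation*}
For every fixed $U$ the map $(W,V)\mapsto g(U,W,V)$ is affine, because each summand is linear in $W$ or in $V$. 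Hence $f$, being a pointwise infimum of a family of affine functions, is concave on $\mathcal G_W^+ \times \mathcal G_V^+$; this settles concavity with no further work.

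For smoothness, I would identify the subspace $\mathcal U$ of block lower triangular matrices with $\R^{d_U}$ through an orthonormal basis in the Frobenius inner product, writing $U = U(\theta)$ with $\|U(\theta)\|_F = \|\theta\|_2$. In these coordinates $g$ becomes $g(\theta,W,V) = \theta^\top \mathbf A(W,V)\theta + 2\,\mathbf b(W)^\top \theta + c(W)$, where $\mathbf A(W,V)$ is affine in $(W,V)$ and $\mathbf b, c$ are affine in $W$. The decisive step is the uniform positive definiteness of $\mathbf A(W,V)$ on the restricted domain. For any $U$ we have $\Tr(U^\top M U (S+V)) \geq \lambda_{\min}(M)\,\lambda_{\min}(S+V)\,\|U\|_F^2$; since $S = DWD^\top \succeq 0$ and, by the definition of $\mathcal G_V^+$ together with the regularity condition $\hat V_t \succ 0$, we have $V \succeq \big(\min_t \lambda_{\min}(\hat V_t)\big) I \succ 0$, it follows that $S + V \succeq \big(\min_t \lambda_{\min}(\hat V_t)\big) I$. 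Hence $\mathbf A(W,V) \succeq \alpha I$ with $\alpha = \lambda_{\min}(M)\,\min_t \lambda_{\min}(\hat V_t) > 0$, uniformly over the domain.

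The remainder is routine. Uniform positive definiteness yields the unique minimizer $\theta^\star(W,V) = -\mathbf A(W,V)^{-1}\mathbf b(W)$ and the closed form $f(W,V) = c(W) - \mathbf b(W)^\top \mathbf A(W,V)^{-1}\mathbf b(W)$. Because $\mathbf A$ is affine in $(W,V)$ with eigenvalues bounded below by $\alpha$ and above (the domain $\mathcal G_W^+ \times \mathcal G_V^+$ is compact, so $\mathbf A$ is bounded), the inverse $\mathbf A(W,V)^{-1}$ is a smooth function with operator norm uniformly bounded by $1/\alpha$; composing with the affine maps $\mathbf b, c$ shows that $f$ is $C^\infty$ on a neighborhood of the domain. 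Since $\mathcal G_W^+ \times \mathcal G_V^+$ is compact and convex (the sets $\mathcal G_W, \mathcal G_V$ are convex and the extra constraints $X_0 \succeq \lambda_{\min}(\hat X_0) I$, etc., are convex PSD-cone constraints), $\nabla^2 f$ is bounded in operator norm over the domain, and one may take $\beta = \sup_{(W,V)} \|\nabla^2 f(W,V)\|$, certifying $\beta$-smoothness. The main obstacle is the uniform strong-convexity bound established above: it is exactly here that the restriction to $\mathcal G_W^+ \times \mathcal G_V^+$ and the regularity assumption $\hat V_t \succ 0$ are indispensable, since otherwise $\mathbf A(W,V)$ could degenerate, the inner minimizer would fail to be unique, and $\mathbf A^{-1}$ — hence $f$ — would lose smoothness.
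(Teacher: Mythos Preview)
Your proof is correct but follows a different route from the paper. Both arguments establish concavity in the same way (pointwise infimum of affine functions). For smoothness, the paper invokes the control-theoretic interpretation: it identifies $f(W,V)$ with the optimal LQG cost~\eqref{eq:lqg-cost}, notes that the Kalman recursions~\eqref{eq:kalman-cov-updates} yield $\Sigma_t$ as proper rational functions of $(W,V)$ whenever $V\in\mc G_V^+$ (so that the required inverses exist), concludes that $f$ is $C^\infty$ on a neighborhood of the domain, and then bounds the Hessian by compactness. You instead exploit the explicit quadratic structure of the inner problem: after parametrizing $\mathcal U$ by an orthonormal basis, the objective becomes a quadratic in $\theta$ whose Hessian $\mathbf A(W,V)$ is affine in $(W,V)$ and satisfies $\mathbf A(W,V)\succeq \lambda_{\min}(R+H^\top Q H)\,\min_t\lambda_{\min}(\hat V_t)\,I$ uniformly on $\mc G_W^+\times\mc G_V^+$; this gives the smooth closed form $f=c-\mathbf b^\top\mathbf A^{-1}\mathbf b$, and compactness again bounds the Hessian. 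Your approach is more self-contained, as it avoids invoking Kalman-filter theory and makes transparent exactly where the restriction to $\mc G_V^+$ and the hypothesis $\hat V_t\succ 0$ enter (to force uniform strong convexity of the inner problem); the paper's approach, by contrast, ties the smoothness to the recursive LQG structure exploited elsewhere in Section~\ref{sec:algorithm} and Remark~\ref{rem:differentiation}.
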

%Concavity holds because the objective function of the inner minimization problem in~\eqref{eq: distributionally robust control problem -- affine simplified max min} is linear (and hence concave) in~$W$ and~$V$ and because concavity is preserved under minimization. Smoothness holds because~$f(W,V)$ coincides with the optimal value of the LQG problem under~$\PP$, which can be shown to be a $\beta$-smooth function for some $\beta>0$ {\color{black}(refer to the expressions in Appendix~\ref{sec: appx: optimal-solution-classic-LQG})}.
%where~$\Sigma_t$ should be viewed as an implicit function of~$W$ and~$V$ that is defined recursively through~\eqref{eq:kalman-cov-updates}.

By Proposition~\ref{prop:structure-of-f}, it is possible to address problem~\eqref{eq:dr-sdp-W-V} with a Frank-Wolfe algorithm \cite{ref:demyanov1970approximate, ref:dunn1979rates, ref:dunn1980convergence, ref:dunn1978conditional, ref:frank1956algorithm, ref:levitin1966constrained}. Each iteration of this algorithm solves a direction-finding subproblem, that is, a variant of problem~\eqref{eq:dr-sdp-W-V} that maximizes the first-order Taylor expansion of~$f(W,V)$ around the current iterates.
\begin{equation} 
    \max\limits_{L_W \in \mc G_W^+, L_V \in \mc G_V^+} \langle \nabla_Wf(W, V), L_W - W\rangle + \langle \nabla_Vf(W, V), L_V-V\rangle
    \label{eq:auxiliary-problem}
\end{equation}
The next iterates are then obtained by moving towards a maximizer $(L\opt_W, L\opt_V)$ of~\eqref{eq:auxiliary-problem}, i.e., we update
\[
    (W,V) \leftarrow (W,V)+\alpha \cdot (L\opt_W-W, L\opt_v-V),
\]
where~$\alpha$ is an appropriate step size. The proposed Frank-Wolfe algorithm enjoys a very low per-iteration complexity because problem~\eqref{eq:auxiliary-problem} is separable. To see this, we reformulate~\eqref{eq:auxiliary-problem} as
\begin{equation*} 
    \begin{array}{c@{\;}l@{}l}
    \displaystyle \max_{L_W,L_V} & \displaystyle \langle\nabla_{X_0}f(W, V), L_{X_0} \!-\! X_0\rangle\! + \! \sum_{t=0}^{T-1}\langle \nabla_{W_t}f(W, V), L_{W_t}\! -\! W_t\rangle + \langle \nabla_{V_t}&f(W, V), L_{V_t}-V_t \rangle \\
    \st & {\color{black}\mathds G} (L_{X_0}, \hat{X}_0)^2 \leq \rho_{x_0}^2,~~ {\color{black}\mathds G} (L_{W_t}, \hat{W}_t)^2 \leq \rho_{w_t}^2,~~ {\color{black}\mathds G} (L_{V_t}, \hat{V}_t)^2 \leq \rho_{v_t}^2 &\forall t\in[T-1] \\
    & L_{X_0}\succeq \lambda_{\min}(\hat X_0){\color{black}I}, \hspace{6mm} L_{W_t}\succeq \lambda_{\min}(\hat W_t){\color{black}I},\hspace{5.5mm} L_{V_t}\succeq \lambda_{\min}(\hat V_t) {\color{black}I}& \forall t\in[T-1].
    \end{array}
\end{equation*}
Hence, \eqref{eq:auxiliary-problem} decomposes into $2T+1$ separate subproblems that can be solved in parallel. That is, for any matrix $Z\in\{X_0, W_0,\ldots, W_{T-1}, V_0, \ldots, V_{T-1}\}$ we solve a separate subproblem of the form
\begin{equation}
    \label{eq:Gelbrich-subproblem}
    \max_{L_Z\succeq \lambda_{\min}(\hat Z)} \left\{ \langle \nabla_Z f(W,V),L_Z-Z\rangle : {\color{black}\mathds G} (L_Z, \hat Z)^2 \leq \rho_{z}^2 \right\}.
\end{equation}
These subproblems can be reformulated as tractable SDPs and are thus amenable to efficient off-the-shelf solvers. By \cite[Theorem~6.2]{nguyen2023bridging}, however, one can exploit the structure of the Gelbrich distance in order to reduce~\eqref{eq:Gelbrich-subproblem} to a univariate algebraic equation that can be solved to any desired accuracy~$\delta>0$ by a highly efficient bisection algorithm. We say that $L_Z^\delta$ is a $\delta$-approximate solution of problem~\eqref{eq:Gelbrich-subproblem} for some~$\delta\in(0,1)$ if $L_Z^\delta$ is feasible in~\eqref{eq:Gelbrich-subproblem} and if 
\[
    \langle \nabla_Z f(W ,V),  L_Z^\delta- Z\rangle \geq \delta \langle \nabla_Z f(W, V), L_Z\opt- Z\rangle,
\]
where~$L_Z\opt$ is an exact maximizer of~\eqref{eq:Gelbrich-subproblem}. Note that, by the concavity of~$f(W,V)$, the inner product on the right-hand side is nonnegative and vanishes if and only if~$Z$ maximizes~$f(W,V)$ over the feasible set of~\eqref{eq:Gelbrich-subproblem}. For further details we refer to Appendix~\S\ref{sec:bisection} in the supplementary material.

\begin{remark}[Automatic differentiation]
    \label{rem:differentiation}
    Recall that~$f(W,V)$ coincides with the optimal value of the LQG problem corresponding to the normal distribution~$\PP$ determined by the covariance matrices~$W$ and~$V$. By using the underlying dynamic programming equations, $f(W,V)$ can thus be expressed in closed form as a serial composition of $\mathcal O(T)$ rational functions (see Appendix~\S\ref{sec: appx: optimal-solution-classic-LQG} for details). Hence, $\nabla_Z f(W,V)$ can be calculated symbolically for any $Z\in\{X_0, W_0,\ldots, W_{T-1}, V_0, \ldots, V_{T-1}\}$ by repeatedly applying the chain and product rules. However, the resulting formulas are lengthy and cumbersome.
    %Note, however, that~$f(W,U)$ is reminiscent of a deep neural network with an input layer that transforms~$X_0$ to~$\Sigma_0$, $T$ intermediate layers that transform~$\Sigma_{t}$ to~$\Sigma_{t+1}$ for every~$t\in[T-1]$ and an output layer that transforms--- somewhat unorthodoxly---all intermediate features to~$f(W,U)$. In this analogy, the input and intermediate layers are defined through~\eqref{eq:kalman-cov-updates}, whereas the output layer is defined through~\eqref{eq:lqg-cost}, and the noise covariance matrices represent the tuning parameters. 
    We thus compute the gradients numerically using backpropagation. The cost of evaluating $\nabla_Z f(W,V)$ is then of the same order of magnitude as the cost of evaluating~$f(W,V)$.
\end{remark}

A detailed description of the proposed Frank-Wolfe method is given in Algorithm~\ref{alg:FW} below.

\begin{algorithm}
\hspace*{\algorithmicindent}\textbf{Input:} initial iterates $W$, $V$, nominal covariance matrices~$\hat W$, $\hat V$, oracle precision~$\delta \in (0, 1)$
% \hspace*{\algorithmicindent} 
% \phantom{\textbf{Input:}}\,initial smoothness parameter $\beta^{-1} >0$, line search parameters $\tau>1, \zeta>1$
\caption{Frank-Wolfe algorithm for solving~\eqref{eq:dr-sdp-W-V}}
\label{alg:FW}
\begin{algorithmic}[1]
\State{{set}~initial iteration counter~$k =0$}
\While{stopping criterion is not met}
\State{\textbf{for}~$Z\in\{X_0, W_0,\ldots, W_{T-1}, V_0, \ldots, V_{T-1}\}$~\textbf{do in parallel}} 
\State{\quad \quad compute $\nabla_{Z} f(W, V)$}
\State{\quad \quad find a $\delta$-approximate solution $L^\delta_Z$ of~\eqref{eq:Gelbrich-subproblem}}
\State{\textbf{end}}
\State$g\leftarrow \langle \nabla_Wf(W, V), L^\delta_W - W\rangle + \langle \nabla_V f(W, V), L^\delta_V-V\rangle$
\State $(W,V) \leftarrow (W,V)+ 2/(2+k) \cdot (L^\delta_W-W, L^\delta_V-V)$
\EndWhile\\
\textbf{Output}: $W$ and $V$
\end{algorithmic}
\end{algorithm}
By \cite[Theorem~1 and Lemma~7]{jaggi2013revisiting}, which applies thanks to the structural properties of $f(W,V)$ established in Proposition~\ref{prop:structure-of-f}, Algorithm~\ref{alg:FW} attains a suboptimality gap of $\epsilon$ within $\mathcal{O}(1/\epsilon)$ iterations.
\section{Numerical Experiments}
All experiments are run on an~Intel i7-8700 CPU (3.2 GHz) machine with 16GB RAM. All linear SDP problems are modeled in Python~3.8.6 using CVXPY~\cite{ref:agrawal2018rewriting, ref:diamond2016cvxpy} and solved with MOSEK~\cite{ref:mosek}. 
The gradients of $f(W,V)$ %$\nabla_Z f(W,V)$ 
are computed via Pymanopt~\cite{ref:townsend2016pymanopt} with PyTorch's automated differentiation module~\cite{ref:paszke2017automatic, ref:NEURIPS2019_bdbca288}.

Consider a class of distributionally robust LQG problems with $n = m = p = 10$. We set ~$A_t=0.1 \times A$ to have ones on the main diagonal and the superdiagonal and zeroes everywhere else ($A_{i,j} = 1$ if $i=j$ or $i=j-1$ and $A_{i,j}=0$ otherwise), %Hence, $A_t$ coincides with the sum of the identity matrix and the upper shift matrix in $\mathbb R^{n\times n}$. 
and the other matrices to $B_t= C_t= Q_t = R_t = I_d$. The Wasserstein radii are set to $\rho_{x_0} = \rho_{w_t} = \rho_{v_t} = 10^{-1}$. The nominal covariance matrices of the exogenous uncertainties are constructed randomly and with eigenvalues in the interval $[1,2]$ (so as to ensure they are positive definite). The code is publicly available in the Github repository~\url{https://github.com/RAO-EPFL/DR-Control}.
% \url{https://github.com/DR-Control/dr-control.git}.
%The nominal covariance matrices of the exogenous uncertainties are constructed randomly using the following procedure. For each matrix~$\hat Z \in \{\hat X_0, \hat W_0, \ldots, \hat W_{T-1}, \hat V_0,\ldots, \hat V_{T-1}\}$, we first sample a matrix~$M_Z \in \R^{d\times d}$ from the uniform distribution on the hypercube $[0,1]^{d\times d}$. Next, we define $R_Z\in \R^{d\times d}$ as the orthogonal matrix whose columns represent the orthonormal eigenvectors of $M_Z + M_Z^\top$. Finally, we set $\hat Z = R_Z \Lambda_Z R_Z^\top$, where $\Lambda_Z$ is a diagonal matrix whose main diagonal is sampled uniformly from the interval~$[1, 2]^d$. (The rationale for the slightly more complex procedure is to ensure that the covariance matrices are positive definite.)

The optimal value of the distributionally robust LQG problem~\eqref{DRCP} can be computed by directly solving the SDP reformulation of~\eqref{eq: distributionally robust control problem -- affine simplified max min} with MOSEK or by solving the nonlinear SDP~\eqref{eq:dr-sdp-W-V} with our Frank-Wolfe method detailed in Algorithm~\ref{alg:FW}. We next compare these two approaches in 10~independent simulation runs, where we set a stopping criterion corresponding to an optimality gap below~$10^{-3}$ and we run the Frank-Wolfe method with~$\delta = 0.95$. 
%Figure~\ref{fig:scale} shows the execution time of the Frank-Wolfe (FW) algorithm as well as by MOSEK to drive the (surrogate) duality gap below~$10^{-3}$. 
Figure~\ref{fig:scale} illustrates the execution time for both approaches as a function of the planning horizon $T$; runs where MOSEK exceeds $100$s are not reported. Figure~\ref{fig:suboptimality} visualizes the empirical convergence behavior of the Frank-Wolfe algorithm. 
The results highlight that the Frank-Wolfe algorithm  achieves running times that are uniformly lower than MOSEK across all problem horizons and is able to find highly accurate solutions already after a small number of iterations (50 iterations for problem instances of time horizon~$T=10$).
\begin{figure}
    \label{fig:my_label}
    \centering
     \begin{subfigure}[h]{0.485\columnwidth}
         \centering
        \includegraphics[width=\columnwidth]{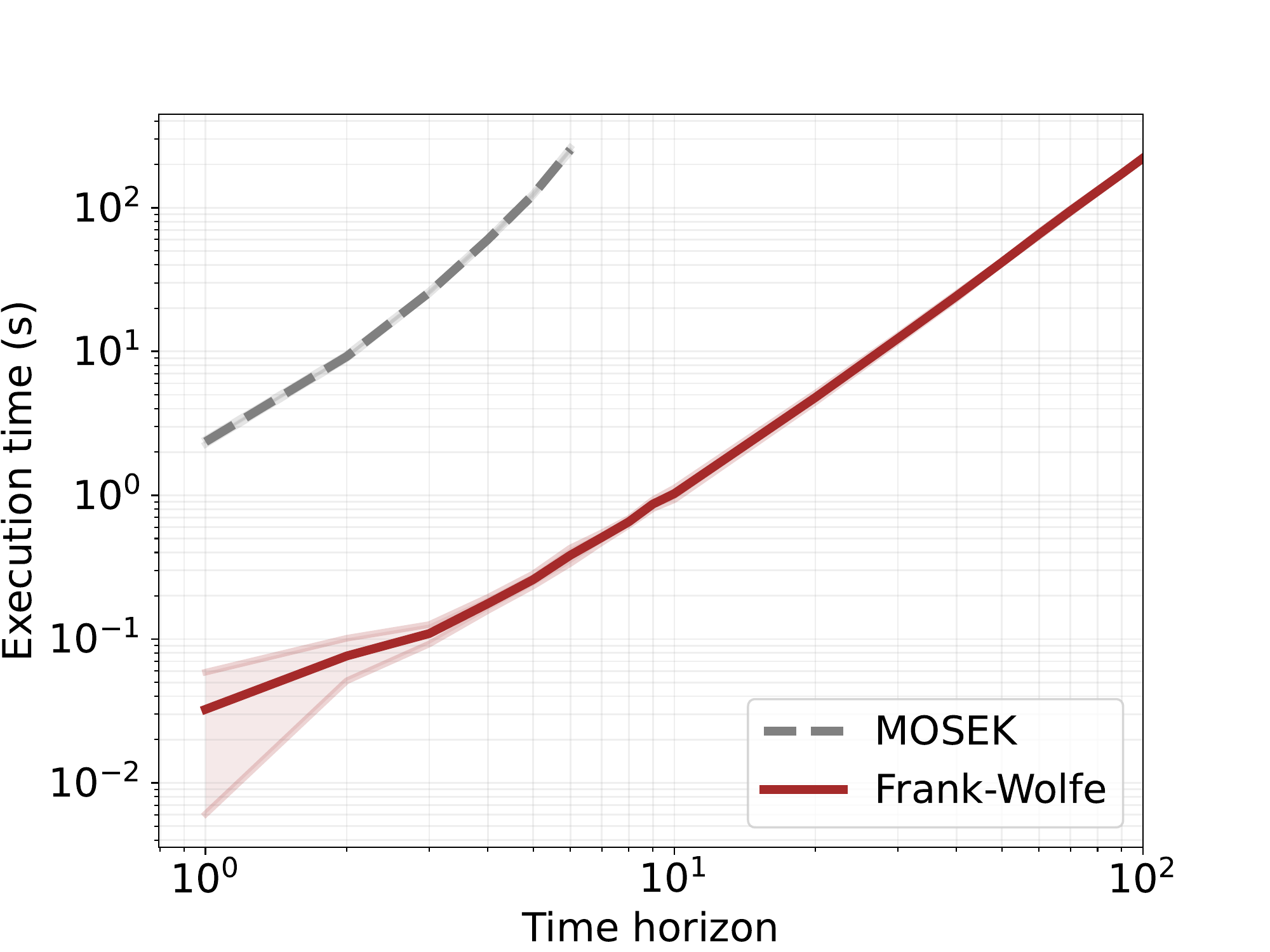}
         \caption{}
         \label{fig:scale}
     \end{subfigure}\begin{subfigure}[h]{0.485\columnwidth}
         \centering
        \includegraphics[width=\columnwidth]{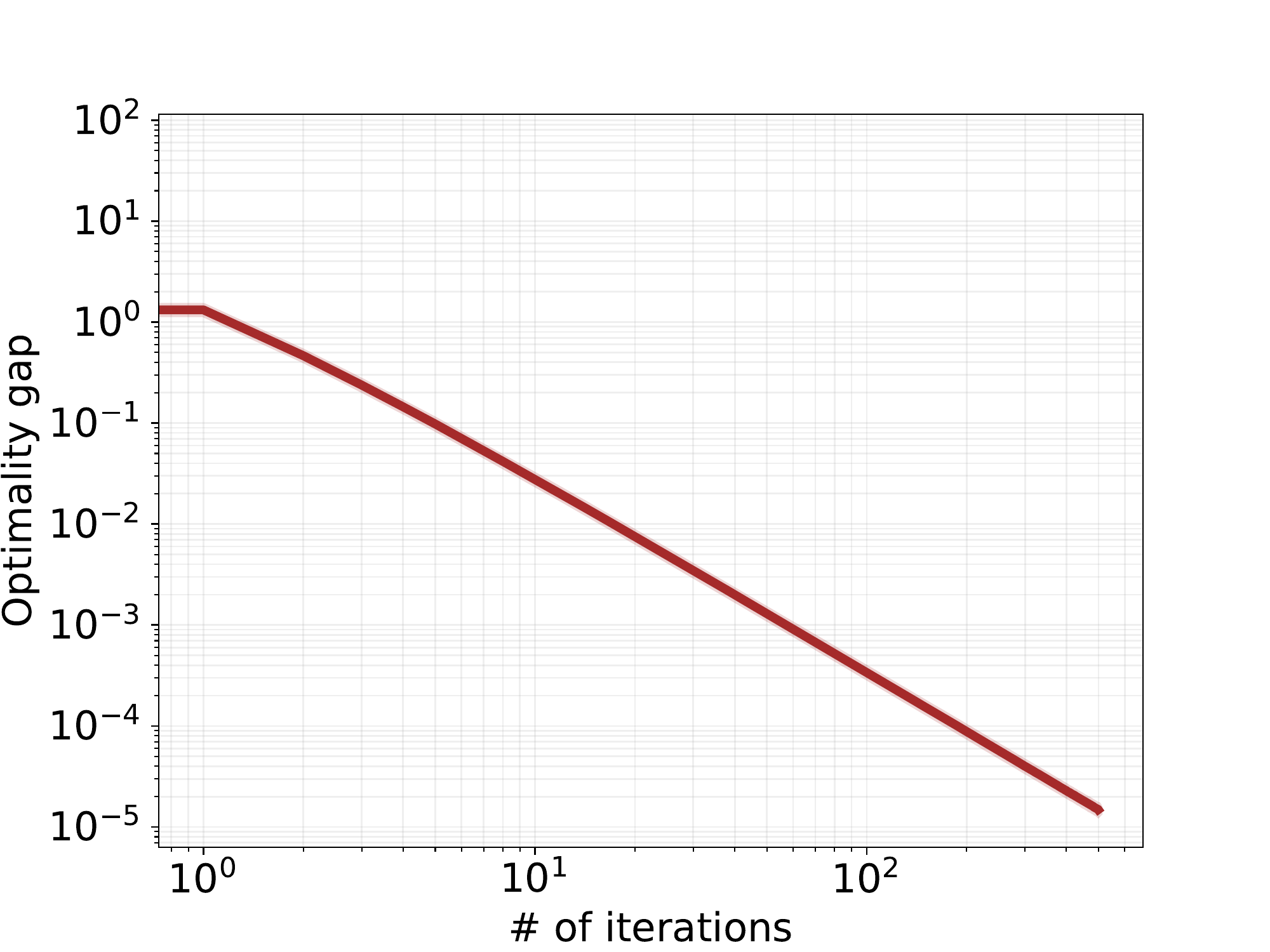}
         \caption{}
         \label{fig:suboptimality}
     \end{subfigure}
     \caption{(a) Execution time for MOSEK and Frank-Wolfe algorithm  over 10 simulation runs as a function of the horizon $T$ (solid lines show the mean and the shaded areas correspond to 1 standard deviation). (b) Convergence of optimality gap for Frank-Wolfe algorithm with horizon $T = 10$.}
\end{figure}
\section{Concluding Remarks and Limitations}
In view of the popularity of LQG models, the results in this work carry important theoretical and practical implications. Despite considering a generalization of the classic LQG setting where the noise affecting the system dynamics and the observations follows unknown (and potentially non-Gaussian) distributions, our findings suggest that certain classic structural results continue to hold and that highly efficient methods can be adapted to tackle this more realistic (and more challenging) problem. Specifically, that control policies depending linearly on observations continue to be optimal and that the worst-case distribution turns out to be Gaussian is surprising from a theoretical angle and also has direct practical implications, because it allows leveraging the highly efficient Kalman filter in conjunction with dynamic programming and a Frank-Wolfe method to design an efficient computational procedure for solving the problem.

The results also raise several important questions that warrant future exploration. First, it would be highly relevant to consider extensions where the system matrices are also affected by uncertainty, as this captures many applications of practical interest in, e.g., reinforcement learning or %application areas such as 
revenue management. Second, it would be worth exploring an infinite horizon setting or relaxing the assumption that the nominal distribution is Gaussian, as both assumptions may be limiting the practical appeal of the framework. Third, one could also attempt to prove structural optimality results or design novel algorithms for generating high-quality suboptimal solutions for the  more general 
%(and more challenging) 
setting involving constraints on states and/or control inputs. Lastly, one could improve the present algorithmic proposal by exploiting topological properties of the objective so as to guarantee linear convergence rates in the Frank-Wolfe procedure.

\paragraph{Acknowledgements.}
This research was supported by the Swiss National Science Foundation under the NCCR Automation, grant agreement 51NF40\_180545. Dan A. Iancu would like to acknowledge INSEAD for financial support during the duration of the project.

\bibliographystyle{plain}
\bibliography{bib}

\newpage
\section*{Appendix}
\appendix
	\renewcommand\thesection{\Alph{section}}
	\renewcommand{\theequation}{A.\arabic{equation}}
	\renewcommand{\thefigure}{A.\arabic{figure}}
	\renewcommand{\thetable}{A.\arabic{table}}
    \renewcommand{\thealgorithm}{A.\arabic{algorithm}}
\maketitle

The supplementary material is structured as follows. Appendix~\S\ref{sec: appx: optimal-solution-classic-LQG} presents the well-known solution to the classic LQG problem using dynamic programming and Kalman Filter estimation. Appendix~\S\ref{sec: appx: stacked-matrices} provides the definitions of the stacked system matrices utilized in the compact formulation \eqref{DRCP} of the distributionally robust LQG problem. Appendix~\S\ref{sec: proofs} contains the proofs of the formal statements in the main text and provides additional technical results. Appendix~\S\ref{sec: sdp reformulation} derives the SDP reformulation of the dual problem~\eqref{eq: distributionally robust control problem -- affine simplified max min}. Appendix~\S\ref{sec:bisection}, finally, elaborates on the bisection algorithm used for solving the linearization oracle of the Frank-Wolfe algorithm. 
\section{Solution of the LQG Problem} 
\label{sec: appx: optimal-solution-classic-LQG}
The classic LQG problem can be solved efficiently via dynamic programming; see, e.g.,~\cite{Bertsekas_2017}. That is, the unique optimal control inputs satisfy $u\opt_t=K_t \hat x_t$ for every $t\in[T-1]$, where~$K_t\in\mathbb R^{n\times n}$ is the optimal feedback gain matrix, and~$\hat x_t=\mathbb E_{\PP}[x_t|y_0,\ldots,y_t]$ is the minimum mean-squared-error estimator of~$x_t$ given the observation history up to time~$t$. Thanks to the celebrated separation principle, $K_t$ can be computed by pretending that the system is deterministic and allows for perfect state observations, and~$\hat x_t$ can be computed while ignoring the control problem.

To compute~$K_t$, one first solves the deterministic LQR problem corresponding to the LQG problem at hand. Its value function~$x_t^\top P_tx_t$ at time~$t$ is quadratic in~$x_t$, and~$P_t$ obeys the backward recursion
\begin{subequations}
\label{eq:LQR-solution}
\begin{equation}
    P_t = A_t^\top P_{t+1} A_t + Q_t - A_t^\top P_{t+1} B_t(R_t + B_t^\top P_{t+1} B_t)^{-1} B_t^\top P_{t+1}A_t \quad \forall t\in[T-1]
\label{eq:control-gain-Pt}
\end{equation}
initialized by~$P_T=Q_T$. The optimal feedback gain matrix~$K_t$ can then be computed from~$P_{t+1}$ as
\begin{equation}
    K_t = -(R_t + B_t^\top P_{t+1}B_t)^{-1} B_t^\top P_{t+1} A_t\quad \forall t\in[T-1].
    \label{eq:feedback-gain-Kt}
\end{equation}
\end{subequations}
%Note that $K_t$ does not depend on $C_t$, $X_0$, $W_t$, $V_t$.
Since~$x_t$ and~$(y_0,\ldots,y_t)$ are jointly normally distributed, the minimum mean-squared-error estimator~$\hat x_t$ can be calculated directly using the formula for the mean of a conditional normal distribution. Alternatively, however, one can use the Kalman filter to compute~$\hat x_t$ recursively, which is significantly more insightful and efficient. The Kalman filter also recursively computes the covariance matrix~$\Sigma_t$ of~$x_t$ conditional on~$y_0,\ldots,y_t$ and the covariance matrix~$\Sigma_{t+1 | t}$ of~$x_{t+1}$ conditional on~$y_0,\ldots,y_{t}$ evaluated under~$\PP$.
%a current state estimate covariance matrix~$\Sigma_{t} = \EE_\PP[(x_t - \hat x_t) (x_t - \hat x_t)^\top]$ as well as a next state estimate covariance matrix~$\Sigma_{t+1 | t} = A_{t} \Sigma_t A_{t}^\top + W_t$. 
Specifically, these covariance matrices obey the forward recursion
\begin{equation}
    \left.\begin{aligned}
    &\Sigma_t = \Sigma_{t | t-1} - \Sigma_{t | t-1} C_t^\top (C_t \Sigma_{t | t-1} C_t^\top + V_t)^{-1} C_t \Sigma_{t | t-1} \\ & \Sigma_{t+1 | t} = A_{t} \Sigma_t A_{t}^\top + W_t 
    \end{aligned} \right\} ~\forall t\in[T-1]
\label{eq:kalman-cov-updates}
\end{equation}
initialized by~$\Sigma_{0 | -1} = X_0$. Using~$\Sigma_{t | t-1}$, we then define the Kalman filter gain as
% \begin{equation*}
%     L_t = \Sigma_{t | t-1} C_t^\top (C_t \Sigma_{t | t-1} C^\top + V_t)^{-1} \quad \forall t\in[T-1],
% \end{equation*}
% which allows us to compute the minimum mean-squared-error estimator via the forward recursion
% \begin{equation*}
%     \label{eq:mmse}
%     \hat x_{t+1} = A_t \hat x_t + B_t u_t + L_{t+1} \left(y_{t+1} - C_t(A_t \hat x_t + B_t u_t) \right) \quad \forall t\in[T-1]
% \end{equation*}
\begin{equation*}
    L_t = \Sigma_{t} C_{t}^\top  V_{t}^{-1} \quad \forall t \in [T-1]
\end{equation*}
which allows us to compute the minimum mean-squared-error estimator via the forward recursion
\begin{equation*}
    \label{eq:mmse}
    \hat x_{t+1} = A_t \hat x_t + B_t u_t + L_{t+1} \left(y_{t+1} - C_{t+1}(A_t \hat x_t + B_t u_t) \right) \quad \forall t\in[T-1]
\end{equation*}
initialized by~$\hat x_0 = L_0 y_0$.
%$e_{t+1} \sim \mc N(0, C_t \Sigma_{t+1 | t} C_t^\top + V_t)$ is next output prediction error independent of $y_t$. Note that the Kalman filter gains $L_t$ do not dependent on data $B_t$, $Q_t$, $R_t$.
One can also show that the optimal value of the LQG problem amounts to
\begin{equation}
    \sum\limits_{t=0}^{T-1} \Tr((Q_t-P_t) \Sigma_t) + \sum\limits_{t=1}^T \Tr( P_t (A_{t-1} \Sigma_{t-1} A_{t -1}^\top + W_{t-1})) + \Tr(P_0 X_0).
    \label{eq:lqg-cost}
\end{equation}
\section{Definitions of Stacked System Matrices}
\label{sec: appx: stacked-matrices}
The stacked system matrices appearing in the distributionally robust LQG problem~\eqref{DRCP} are defined as follows. 
First, the stacked state and input cost matrices $Q\in \mathbb S^{n(T+1)}$ and $R\in \mathbb S^{mT}$ are set to
\[
    \quad Q = \begin{bmatrix}
    Q_0  \\
    &Q_1   \\
    & & \ddots \\
    & & &Q_{T}
    \end{bmatrix}
    \quad \text{and} \quad R = \begin{bmatrix}
    R_0  \\
    &R_1   \\
    & & \ddots \\
    & & &R_{T-1}
    \end{bmatrix},
\]
respectively. Similarly, the stacked matrices appearing in the linear dynamics and the measurement equations $C\in \R^{pT\times n(T+1)}$, $G\in\R^{n (T+1) \times n(T+1)}$ and $H\in \R^{n(T+1) \times m T}$ are defined as
\begin{equation*}
\begin{aligned}
    C = \begin{bmatrix}
    C_0 &0 \\
    &C_1  & 0\\
    & & \ddots &\ddots\\
    & & &C_{T-1} &0
    \end{bmatrix} ,\quad G =
    \begin{bmatrix}
    A_0^0 \\
    A_0^1 &A^1_1 \\
    \vdots & &\ddots \\
    A_0^T &A_1^T &\dots &A^T_T
    \end{bmatrix}
    \end{aligned}
\end{equation*}
and
\[
    H = 
    \begin{bmatrix}
    0 \\
    A^1_1 B_0 & 0 \\
    A^2_1 B_0 &A^2_2 B_1 &0 \\
    \vdots & & &\ddots \\
    \vdots & & & &0\\
    A_1^T B_0 &A_2^T B_1 &\dots &\dots &A^T_T B_{T-1}
    \end{bmatrix},
\] 
respectively, where $A^t_s = \prod_{k = s}^{t-1} A_{k}$ for every $s < t$ and $A^t_s = I_n$ for $s= t$.

Using the stacked system matrices, we can now express the purified observation process~$\eta$ as a linear function of the exogenous uncertainties~$w$ and~$v$ that is {\em not} impacted by $u$; see also \cite{Ben-Tal_Boyd_Nemirovski_2005, ref:skaf2010design}
\begin{lemma}
    We have $\eta = Dw + v$, where $D = CG$.
    \label{lemma:eta-rep-w-v}
\end{lemma}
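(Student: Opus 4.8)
The plan is to reduce the claim to the already-established decomposition $x = Hu + Gw$ of the stacked state by introducing the error process between the true and the fictitious (noise-free) systems. First I would define $e_t = x_t - \hat x_t$ for every $t$ and subtract the two state recursions. Since both systems are driven by the \emph{same} control inputs $u_t$, the control terms $B_t u_t$ cancel, leaving the homogeneous recursion $e_{t+1} = A_t e_t + w_t$ with the initialization $e_0 = x_0 - \hat x_0 = x_0$ (recall $\hat x_0 = 0$). This shows that $e_t$ evolves exactly like the original state but forced solely by the exogenous noise $w = (x_0, w_0, \ldots, w_{T-1})$, with no dependence on the controls.

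Next I would stack the errors into $e = (e_0, \ldots, e_T)$ and argue that $e = Gw$. The cleanest route is to observe that the fictitious system obeys the same dynamics as the original one but with zero process noise and zero initial state; hence the decomposition $x = Hu + Gw$ specializes to $\hat x = Hu$, and subtracting yields $e = x - \hat x = Gw$. Alternatively, one can solve the recursion for $e_t$ in closed form, obtaining $e_t = A^t_0 x_0 + \sum_{s=0}^{t-1} A^t_{s+1} w_s$ with $A^t_s = \prod_{k=s}^{t-1} A_k$, and then verify block-by-block that this matches the $t$-th block row of $Gw$ as given by the explicit structure of $G$.

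Finally I would relate the purified observations to the error. From $\eta_t = y_t - \hat y_t = (C_t x_t + v_t) - C_t \hat x_t = C_t e_t + v_t$, stacking over $t \in [T-1]$ and invoking the block structure of the measurement matrix $C$ gives $\eta = Ce + v$. Substituting $e = Gw$ then yields $\eta = CGw + v = Dw + v$ with $D = CG$, as claimed. Crucially, because $e = Gw$ carries no control dependence, neither does $\eta$, which is the property that makes the purified reformulation useful downstream.

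The calculations are entirely routine; the only point requiring care is the index bookkeeping in the second step—specifically, confirming that the prepended initial state $x_0$ in $w$ aligns with the first block column of $G$ and that the convention $A^t_s = \prod_{k=s}^{t-1} A_k$ makes the explicit solution of the recursion coincide with $Gw$. I expect no genuine obstacle beyond this clerical verification.
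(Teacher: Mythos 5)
Your argument is correct and follows essentially the same route as the paper's proof: both reduce the claim to the identities $y = Cx + v$, $\hat y = C\hat x$, $x = Hu + Gw$, and $\hat x = Hu$, so that the control term cancels in the difference and $\eta = C(x-\hat x) + v = CGw + v$. The additional error-recursion bookkeeping you describe is a fine (if slightly more explicit) way to justify $\hat x = Hu$, but it adds nothing beyond the paper's one-line observation.
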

\begin{proof}[Proof of Lemma~\ref{lemma:eta-rep-w-v}]
The purified observation process is defined as~$\eta = y -\hat y$. Recall now that the observations of the original system satisfy $y=Cx+v$. Similarly, one readily verifies that the observations of the fictitious noise-free system satisfy $\hat y= C \hat x$. Thus, we have $\eta= C(x-\hat x)+v$. Next, recall that the state of the original system satisfies $x=Hu+Gw$, and note that the state of the fictitious noise-free system satisfies $\hat x=Hu$. Combining all of these linear equations finally shows that $u$ cancels out and that $\eta = CGw+v = D w + v$. 
%Hence, $\eta$ is solely dependent on the process noise $w$ and the noise $v$. 
% This leads us to conclude that the matrix $D$ encapsulates the system's response to process noise.
% By definition~$\eta = y - \hat y$. By the system's dynamics we can represent $y$ as $Cx + v$ and $\hat y = C\hat x$. Recall that $x = Hu + Gw$ and $\hat x =Hu$. Combining these expressions, we have $\eta = CGw + v$ and conclude~$D = CG$.
    % Using these definition, then we can construct $D$ as $CG$ because \[\eta = y - \hat y = Cx + v - C \hat x = C(Hu + Gw - Hu) + v = CGw + v = D w + v.\]
\end{proof}
\section{Proofs}\label{sec: proofs}
\subsection{Additional Technical Results}
It is well known that every causal controller that is linear in the original observations~$y$ can be reformulated as a causal controller that is linear in the purified observations~$\eta$ and vice versa \cite{Ben-Tal_Boyd_Nemirovski_2005, ref:skaf2010design}. Perhaps surprisingly, however, the one-to-one transformation between the respective coefficients of~$y$ and~$\eta$ is {\em not} linear. To keep this paper self-contained, we review these insights in the next lemma. 
\begin{lemma}
If $u=U\eta+q$ for some~$U \in \mc U$ and $q \in \R^{pT}$, then $u=U'y+q'$ for $U'= (I + UCH)^{-1} U $ and $q'=(I + UCH)^{-1} q$. Conversely, if $u=U'y+q'$ for some~$U' \in \mc U$ and $q' \in \R^{pT}$, then $u=U\eta+q$ for $U=(I - U'CH)^{-1}{\color{black} U'}$ and $q=(I - U'CH)^{-1} q'$.
\label{lemma:linear-rel-u-eta}
\end{lemma}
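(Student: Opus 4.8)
The plan is to reduce both directions of the claim to a single algebraic identity linking the purified and the original observations, namely $\eta = y - CHu$. This identity follows by combining the three defining relations already used in the proof of Lemma~\ref{lemma:eta-rep-w-v}: the purified observation is $\eta = y - \hat y$, the fictitious output satisfies $\hat y = C\hat x$, and the fictitious state satisfies $\hat x = Hu$. Hence $\hat y = CHu$ and therefore $\eta = y - CHu$. With this relation in hand, both implications become one-line substitutions.

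For the forward direction I would substitute $\eta = y - CHu$ into $u = U\eta + q$ to obtain $u = U(y - CHu) + q$, then collect the terms involving $u$ on the left to get $(I + UCH)\,u = Uy + q$. Solving for $u$ yields $u = (I + UCH)^{-1}U\,y + (I + UCH)^{-1}q$, which is precisely the asserted formula with $U' = (I + UCH)^{-1}U$ and $q' = (I + UCH)^{-1}q$. The converse is entirely symmetric: substituting $y = \eta + CHu$ into $u = U'y + q'$ gives $(I - U'CH)\,u = U'\eta + q'$, and inverting $I - U'CH$ produces the stated $U$ and $q$.

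The main obstacle, and the only step requiring genuine justification, is to certify that $I + UCH$ and $I - U'CH$ are invertible and that the resulting coefficient matrices $U'$ and $U$ belong to $\mc U$. Here I would exploit causality of the fictitious noise-free system: since $\hat x_0 = 0$ and $\hat y_t = C_t \hat x_t$ depends only on $u_0, \dots, u_{t-1}$, the product $CH$ is \emph{strictly} block lower triangular (equivalently, this can be read off directly from the stacked matrices $C$ and $H$ in Appendix~\S\ref{sec: appx: stacked-matrices}). Consequently $UCH$ (resp.\ $U'CH$) is strictly block lower triangular, being the product of a block lower triangular matrix with a strictly block lower triangular one, and hence nilpotent. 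Nilpotency immediately yields invertibility of $I \pm UCH$, and the terminating Neumann expansion $(I + UCH)^{-1} = \sum_{k \ge 0} (-UCH)^k$ shows that the inverse is again block lower triangular; multiplying by the block lower triangular $U$ (resp.\ $U'$) then places $U'$ (resp.\ $U$) in $\mc U$. I expect establishing the strict block lower triangularity of $CH$ to be the one point needing care, while the remaining manipulations are routine.
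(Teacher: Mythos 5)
Your proof is correct and follows essentially the same route as the paper's: substitute the identity $\eta = y - CHu$ (equivalently $\hat y = CHu$) into the policy, collect terms in $u$, and invert $I \pm UCH$, which the paper justifies by noting it is lower triangular with unit diagonal. Your additional observation that $CH$ is \emph{strictly} block lower triangular, so that $UCH$ is nilpotent and the Neumann series shows $U'$ and $U$ remain in $\mc U$, is a correct and slightly more complete justification than the paper gives, since membership of the transformed coefficient matrix in $\mc U$ is implicitly needed later but not verified explicitly in the paper's proof.
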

\begin{proof}[Proof of~Lemma~\ref{lemma:linear-rel-u-eta}]
    If $u = U \eta +q$ for some~$U \in \mc U$ and $q \in \R^{pT}$, then we have
    \[
        u=U\eta+q= U(y-\hat y)+q=Uy-UC\hat x+q=Uy-UCHu+q,
    \]
    where the second equality follows from the definition of~$\eta$, the third equality holds because $y = Cx+v$, and the last equality exploits our earlier insight that $\hat y = C\hat x$. The last expression depends only on~$y$ and~$u$. Solving for $u$ yields $u= U'y+q'$, where $U'= (I+ UCH)^{-1} U$ and $q'=(I + UCH)^{-1}q$. Note that $(I+ UCH)$ is indeed invertible because $ I+ UCH$ is a lower triangular matrix with all diagonal entries equal to one, ensuring a determinant of one. 
    
    Similarly, if $u = U'y + q'$ for some~$U' \in \mc U$ and $q' \in \R^{pT}$, then we have
    \[
        u = U'y + q' = U'(\eta + \hat y)+q' = U'\eta + U'C\hat x +q' = U'\eta + U'CHu +q'.
    \]
    Solving for~$u$ yields $u=U\eta+q$, where $U = (I - U' CH)^{-1} U' $ and $ q = (I - U' CH)^{-1} q'$. Note again that $(I- U'CH)$ is indeed invertible because $(I - U' CH)$ is a lower triangular matrix with all diagonal entries equal to one.
 \end{proof}
\subsection{Proofs of Section~\ref{sec:Nash}}
\begin{proof}[Proof of~Proposition~\ref{prop: min max SDP upper bound problem}]
In problem~\eqref{upper bound problem 1}, both $ u$ and $  x$ are linear in $w$ and $v$, i.e., $u = q + UDw + Uv$ and ${x} = H u + G w = Hq + HUDw + HUv + Gw$. By substituting the linear representations of $ u$ and $ x$ into the objective function of problem \eqref{upper bound problem 1}, we obtain the following equivalent reformulation.
%\newck{
%\begin{equation*}
%\begin{aligned}
%\min\limits_{\substack{\;\;\;q \in \R^{pT}\\ U \in \mathcal U}} \;\;\max_{\mathbb P \in \mathcal G}\;\; &\mathbb E_{\mathbb P} \left[ (q + UDw + Uv)^\top R (q + UDw + Uv)\right] \\[-2ex] 
%&+ \mathbb E_{\mathbb P} \left[ (Hq + HUDw + HUv + Gw)^\top Q (Hq + HUDw + HUv + Gw) \right] \\[-2ex]
%\end{aligned}
%\end{equation*}
%}
\begin{equation*}
\begin{aligned}
\min\limits_{\substack{\;\;\;q \in \R^{pT}\\ U \in \mathcal U}} \;\;\max_{\mathbb P \in \mathcal G}\;\; &\mathbb E_{\mathbb P} \left[ w^\top \left(D^\top U^\top(R + H^\top Q H)UD + 2D^\top U^\top H^\top Q G + G^\top Q G \right) w \right] \\[-2ex]
& + \mathbb E_{\mathbb P} \left[v^\top\left(U^\top (R + H^\top QH) U  \right) v\right] + q^\top(R + H^\top Q H) q
\end{aligned}
\end{equation*}
For any fixed $\mathbb P \in \mathcal G$, we can express the expectation in the objective function of the above problem in terms of the covariance matrices $W = \mathbb{E}_{\mathbb P}[  w w^\top]$ and $V = \mathbb{E}_{\mathbb P}[  v v^\top]$. Thus, the problem becomes
\begin{equation}\label{upper bound problem 1 reformulation}
\begin{aligned}
&\min\limits_{\substack{\;\;\;q \in \R^{pT}\\ U \in \mathcal U}} &&\max_{W, V, \mathbb P}     
&&\Tr\left(\big(D^\top U^\top(R \!+\! H^\top Q H)UD \!+ 2G^\top Q HUD +\! G^\top Q G \big) W \right)  \\[-2ex]
&&&&&+\Tr\left(\big(U^\top (R + H^\top QH) U  \big) V \right) \!+\! q^\top (R \!+\! H^\top Q H) q\\
&&&\;\;\text{s.t.} &&\mathbb P \in \mathcal G, \;\; W = \mathbb{E}_{\mathbb P}[w w^\top], \;\;V = \mathbb{E}_{\mathbb P}[  v v^\top].
\end{aligned}
\end{equation}
Recall now the definition of $\mathcal G$, 
and note that the requirements $\mathds G (X_0, \hat{X}_0) \leq \rho_{x_0}$, $\mathds G (W_t, \hat{W}_t) \leq \rho_{w_t}$ and $\mathds G (V_t, \hat{V}_t) \leq \rho_{v_t}$ are equivalent to the convex constraints $\mathds G (X_0, \hat{X}_0)^2 \leq \rho_{x_0}^2$, $\mathds G (W_t, \hat{W}_t)^2 \leq \rho_{w_t}^2$ and $\mathds G (V_t, \hat{V}_t)^2 \leq \rho_{v_t}^2$, respectively, for all $t \in [T-1]$. The definition of $\mathcal G$ also implies that
\begin{equation*}
% \label{eq:uncorrelated}
    \begin{aligned}
    W = \mathbb{E}_{\mathbb P}[w w^\top] = \operatorname{diag}(X_0, W_0, \dots, W_{T-1}) \quad \text{and} \quad &V = \mathbb{E}_{\mathbb P}[v v^\top] = \operatorname{diag}(V_0, \dots, V_{T-1}).
    \end{aligned}
\end{equation*}
Problem \eqref{upper bound problem 1 reformulation} thus constitutes a relaxation of problem \eqref{eq: distributionally robust control problem -- affine simplified min max}. Indeed, the feasible set of the inner maximization problem in~\eqref{upper bound problem 1 reformulation} is a subset of the feasible set of the inner maximization problem in~\eqref{eq: distributionally robust control problem -- affine simplified min max}. Moreover, for any $W$ and $V$ %, $X_0$, \{W_t\}_0^{t-1}, \{V_t\}_0^{t-1}$ 
feasible in the inner maximization problem in \eqref{eq: distributionally robust control problem -- affine simplified min max}, the distribution $\mathbb P = \PP_{x_0} \otimes (\otimes_{t=0}^{T-1}\PP_{w_t})\otimes (\otimes_{t=0}^{T}\PP_{v_t})$ defined through $\PP_{x_0} = \mathcal N(0, X_0)$, $\PP_{w_t} = \mathcal N(0, W_t)$ and $\PP_{v_t} = \mathcal N(0, V_t)$, $t \in [T-1]$, is feasible in the inner maximization problem in \eqref{upper bound problem 1 reformulation} with the same objective value. The relaxation is thus exact, and the optimal values of~\eqref{upper bound problem 1},~\eqref{eq: distributionally robust control problem -- affine simplified min max} and~\eqref{upper bound problem 1 reformulation} coincide. 
\end{proof}
\begin{proof}[Proof of Proposition~\ref{prop:dual-sdp}]
Recall that the space~$\mathcal U_y$ of all causal output-feedback controllers coincides with the space~$\mathcal U_\eta$ of all causal {\em purified} output-feedback controllers. We can thus replace the feasible set~$\mathcal U_\eta$ of the inner minimization problem in~\eqref{eq: dual distributionally robust control problem restriction} with~$\mathcal U_y$. Hence, for any fixed~$\mathbb P \in \mathcal{W}_{\mathcal N}$, the inner minimization problem in~\eqref{eq: dual distributionally robust control problem restriction} constitutes a classic LQG problem. By standard LQG theory~\cite{Bertsekas_2017}, it is solved by a {\em linear} output-feedback controller of the form $u=U'y+q'$ for some~$U'\in\mathcal U$ and~$q'\in\mathbb R^{pT}$; see also Appendix~\S\ref{sec: appx: optimal-solution-classic-LQG}. Lemma~\ref{lemma:linear-rel-u-eta} shows, however, that any linear output-feedback controller can be equivalently expressed as a linear {\em purified}-output feedback controller of the form~$u=U\eta+q$ for some~$U\in\mathcal U$ and~$q\in\mathbb R^{pT}$. In summary, the above reasoning shows that the feasible set of the inner minimization problem in~\eqref{eq: dual distributionally robust control problem restriction} can be reduced to the family of all linear purified-output feedback controllers without sacrificing optimality. Thus, problem~\eqref{eq: dual distributionally robust control problem restriction} is equivalent to
\begin{equation*}
    \begin{array}{ccl} \max\limits_{\mathbb P \in \mathcal{W}_{\mathcal N}} &\min\limits_{q, U, x, u} &\mathbb E_{\mathbb P} \left[   u^\top R   u +   x^\top Q  x \right]\\
    &\text{s.t.} &  U \in \mathcal U,\;\; u = q + U \eta, \;\;{x} = H u + G w.
    \end{array}
\end{equation*}
Using a similar reasoning as in the proof of Proposition~\ref{prop: min max SDP upper bound problem}, we can now substitute the linear representations of $ u$ and $  x$ into the objective function and reformulate the above problem as
\begin{equation*}
\begin{array}{cccll}
&\max\limits_{W,V,\mathbb P} &\min\limits_{\substack{\;\;\;q \in \R^{pT}\\ U \in \mathcal U}} &\Tr\left(\big(D^\top U^\top(R \!+\! H^\top Q H)UD \!+ 2G^\top Q HUD +\! G^\top Q G \big) W \right)  \\[-2ex]
&&&+\Tr\left(\big(U^\top (R + H^\top QH) U  \big) V \right) \!+\! q^\top (R \!+\! H^\top Q H) q\\[1.5ex]
&&\text{s.t.} &\mathbb P \in \mathcal W_{\mathcal N}, \;\; W = \mathbb{E}_{\mathbb P}[w w^\top], \;\;V = \mathbb{E}_{\mathbb P}[  v v^\top].
\end{array}
\end{equation*}
As $\mathcal{W}_{\mathcal N}$ contains only {\em normal} distributions, Proposition~\ref{prop: tightness for normal distributions} implies that $\mathds W (\mathbb P_{x_0}, \hat{\mathbb P}_{x_0}) = \mathds G(X_0, \hat{X}_0)$, $\mathds W (\mathbb P_{w_t}, \hat{\mathbb P}_{w_t}) = \mathds G(W_t, \hat{W}_t)$ and $\mathds W (\mathbb P_{v_t}, \hat{\mathbb P}_{v_t}) = \mathds G(V_t, \hat{V}_t)$ for all $t \in [T-1]$. We may thus replace the requirement $\mathds W (\mathbb P_{x_0}, \hat{\mathbb P}_{x_0}) \leq \rho_{x_0}$ in the definition of $\mathcal W_{\mathcal N}$ by $\mathds G(X_0, \hat{X}_0) \leq \rho_{x_0}$, which is equivalent to the convex constraint $\mathds G(X_0, \hat{X}_0)^2 \leq \rho_{x_0}^2$. The conditions on the marginal distributions of~$w_t$ and $v_t$, $t\in[T-1]$, admit similar reformulations. The definition of $\mathcal W_{\mathcal N}$ also implies that
\begin{equation*}
    W = \mathbb{E}_{\mathbb P}[w w^\top] = \operatorname{diag}(X_0, W_0, \dots, W_{T-1})\quad \text{and}\quad
    V = \mathbb{E}_{\mathbb P}[v v^\top] = \operatorname{diag}(V_0, \dots, V_{T-1}).
\end{equation*}
Thus, the feasible set of the outer maximization problem in \eqref{eq: distributionally robust control problem -- affine simplified max min} constitutes a relaxation of that in~\eqref{eq: dual distributionally robust control problem restriction}. One readily verifies that the relaxation is exact by using similar arguments as in the proof of Proposition~\ref{prop: min max SDP upper bound problem}. Thus, the claim follows.
\end{proof}
\begin{proof}[Proof of~Theorem~\ref{theorem:lower-equal-upper}]
By Proposition~\ref{prop: min max SDP upper bound problem}, $\bar p\opt$ coincides with the minimum of~\eqref{eq: distributionally robust control problem -- affine simplified min max}. Similarly, by Proposition~\ref{prop:dual-sdp} $\underline d\opt$ coincides with the maximum of~\eqref{eq: distributionally robust control problem -- affine simplified max min}. Note that problems~\eqref{eq: distributionally robust control problem -- affine simplified min max} and~\eqref{eq: distributionally robust control problem -- affine simplified max min} only differ by the order of minimization and maximization. Note also that $\mathcal U$ is convex and closed, $\mc G_W$ and $\mc G_V$ are convex and compact by virtue of~\cite[Lemma~A.6]{nguyen2023bridging}, and the (identical) trace terms in~\eqref{eq: distributionally robust control problem -- affine simplified min max} and~\eqref{eq: distributionally robust control problem -- affine simplified max min} are bilinear in $(W,V)$ and $(U,q)$. The claim thus follows from Sion's minimax theorem~\cite{ref:sion1958minimax}.
\end{proof}
\subsection{Proofs of Section~\ref{sec:algorithm}}
Note that Proposition~\ref{prop:worst-case-Kalman} is consistent with Corollary~\ref{corollary:affine-controllers-are-optimal} because the optimal LQG controller corresponding to~$\PP\opt$ is linear in the past observations.
\begin{proof}[Proof of Proposition~\ref{prop:worst-case-Kalman}]
     By~\cite[Lemma~A.3]{nguyen2023bridging}, the inner problem in~\eqref{eq: distributionally robust control problem -- affine simplified min max} admits a maximizer $(W\opt,V\opt)$ with~$X_0\opt\succeq \lambda_{\min}(\hat X_0)$ as well as $W_t\opt\succeq \lambda_{\min}(\hat W_t)$ and~$V_t\opt\succeq \lambda_{\min}(\hat V_t)$ for all $t\in[T-1]$. Thus, the optimal value of problem~\eqref{eq: distributionally robust control problem -- affine simplified min max} and its strong dual~\eqref{eq: distributionally robust control problem -- affine simplified max min} does not change if we restrict~$\mc G_W$ and~$\mc G_V$ to~$\mc G_W^+$ and~$\mc G_V^+$, respectively.
We may thus conclude that problem~\eqref{eq: distributionally robust control problem -- affine simplified max min} has a maximizer~$(W\opt,V\opt)$ with~$V\opt_t\succeq \lambda_{\min}(\hat V_t) \succ 0$ for all~$t\in[T-1]$. This in turn implies that problem~\eqref{DRCPdual} is solved by a normal distribution~$\PP\opt$ under which the covariance matrix of the observation noise~$v_t$ satisfies~$V\opt_t\succ 0$ for every~$t\in[T-1]$. As~\eqref{DRCP} and~\eqref{DRCPdual} are strong duals, the optimal solution~$u\opt$ of problem~\eqref{DRCP} forms a Nash equilibrium with~$\PP\opt$, i.e., $u\opt$ is a best response to~$\PP\opt$ and thus solves the {\em classic} LQG problem corresponding to~$\PP\opt$. As~$R_t\succ 0$ for every~$t\in[T-1]$, this best response~$u\opt$ is unique, and as~$V\opt_T\succ 0$ for every~$t\in[T-1]$, $u\opt$ is in fact the Kalman filter-based optimal output-feedback strategy corresponding to~$\PP\opt$ (which can be obtained using the techniques highlighted in Appendix~\S\ref{sec: appx: optimal-solution-classic-LQG}).
\end{proof}

%\begin{lemma}[{\cite[Lemma~A.5]{nguyen2023bridging}}]
%\label{lemma:pd-cons-to-psd}
%Assume that $\hat Z \in \mathbb S_+^d$ and $\rho \in \R_+$. If $\mc C \subseteq \R^{l\times d}$ is a non-empty convex set and $h: \mc C \to \R$ is a convex continuous function, then the nonlinear SDP
%\begin{equation}
%    \begin{array}{cclll}
%         &\sup\limits_{Z \succeq 0 } &\inf\limits_{L \in \mc C} \langle L^\top L, Z \rangle + h(L)  \\
%         &\st &\mathds G(Z, \hat Z) \leq \rho,
%    \end{array}
%\end{equation}
%admits a maximizer~$Z\opt \succeq\lambda_{\min}(\hat Z) I$.
%\end{lemma}

Before proving Proposition \ref{prop:structure-of-f}, recall that~$f(W,V)$ is called $\beta$-smooth for some $\beta>0$ if
\[
    |\nabla f(W,V)-\nabla f(W',V')| \leq \beta\left( \|W-W'\|_F^2+\|V-V'\|_F^2 \right)^\frac{1}{2} \quad \forall\, W, W'\in \mc G_{W}^+,\; V, V'\in \mc G_{V}^+,
\]
where $\Vert \cdot \Vert_F$ denotes the Frobenius norm.
\begin{proof}[Proof of Proposition \ref{prop:structure-of-f}] 
The function~$f(W, V)$ is concave because the objective function of the inner minimization problem in~\eqref{eq: distributionally robust control problem -- affine simplified max min} is linear (and hence concave) in~$W$ and~$V$ and because concavity is preserved under minimization. To prove that~$f(W, V)$ is $\beta$-smooth, we first recall from Proposition~\ref{prop: tightness for normal distributions} that it coincides with the optimal value of the inner minimization problem in~\eqref{eq: dual distributionally robust control problem restriction}. As~$\mathcal U_\eta=\mathcal U_y$, $f(W, V)$ can thus be viewed as the optimal value of the classic LQG problem corresponding to the normal distribution~$\PP$ determined by the covariance matrices~$W$ and~$V$. Hence, $f(W,V)$ coincides with~\eqref{eq:lqg-cost}, where $\Sigma_t$, for $t \in [T-1]$, is a function of~$(W,V)$ defined recursively through the Kalman filter equations~\eqref{eq:kalman-cov-updates}. Note that all inverse matrices in~\eqref{eq:kalman-cov-updates} are well-defined because any $V\in \mc G_{V}^+$ is strictly positive definite. Therefore, $\Sigma_t$ constitutes a proper rational function (that is, a ratio of two polyonmials with the polynomial in the denominator being strictly positive) for every~$t\in[T-1]$. Thus, $f(W,V)$ is infinitely often continuously differentiable on a neighborhood of $\mc G_{W}^+ \times \mc G_{V}^+$.

As $f(W,V)$ is concave and (at least) twice continuously differentiable, it is $\beta$-smooth on $\mc G_{W}^+ \times \mc G_{V}^+$ if and only if the largest eigenvalue of the Hessian matrix of~$-f(W,V)$ is bounded above by~$\beta$ throughout $\mc G_{W}^+ \times \mc G_{V}^+$. Also, the largest eigenvalue of the positive semidefinite Hessian matrix $\nabla^2(-f(W,V))$ coincides with the spectral norm of~$\nabla^2 f(W,V)$. We may thus set
%As $\nabla^2 (-f)$ is positive semidefinite, the largest eigenvalue of $\nabla^2 (-f)$ for any $(W,V)$ is bounded above by
% \begin{equation}\label{eq: max eigenvalue as spectral norm}
% \sup_{W \in \mathcal{G}_{W}^+, V \in \mathcal{G}_{V}^+} \Vert \nabla^2 (-f)(W,V) \Vert_2,
% \end{equation}
% {\color{black}
\begin{equation}\label{eq: max eigenvalue as spectral norm}
    \beta = \sup_{W \in \mathcal{G}_{W}^+, V \in \mathcal{G}_{V}^+} \Vert \nabla^2 f(W,V) \Vert_2,
\end{equation}
% }
where $\Vert \cdot \Vert_2$ denotes the spectral norm. The supremum in the above maximization problem is finite and attained thanks to Weierstrass' theorem, which applies because $f(W,V)$ is twice continuously differentiable and the spectral norm is continuous, while the sets~$\mc G^+_W$ and~$\mc G^+_V$ are compact by virtue of~\cite[Lemma~A.6]{nguyen2023bridging}. This observation completes the proof.
\end{proof}

\section{SDP Reformulation of the Lower Bounding Problem}
\label{sec: sdp reformulation}
Instead of solving the dual problem~\eqref{eq: distributionally robust control problem -- affine simplified max min} with the customized Frank-Wolfe algorithm of Section~\ref{sec:algorithm}, it can be reformulated as an SDP amenable to off-the-shelf solvers. This reformulation is obtained by dualizing the inner minimization problem and by exploiting the following preliminary lemma.
\begin{lemma}
\label{lemma:sdp-linear-refor}
For any $\hat Z \in \mathbb S^d_{+}$ and $\rho_z \geq 0 $,  the set $\mc G_Z = \{Z \in \mathbb S_+^d : \mathds G(Z, \hat Z) \leq \rho_z\}$ coincides with
\begin{equation*}\left\{
    Z \in \mathbb S_+^d: \exists  E_z \in \mathbb S_+^d~{\rm{with}}~\Tr(Z + \hat Z - 2 E_z) \leq \rho_z^2, \;
     \begin{bmatrix}
             \hat Z^\frac{1}{2} Z \hat Z^\frac{1}{2} & E_z \\
             E_z & I
         \end{bmatrix} \succeq 0 \right\}.
\end{equation*}
\end{lemma}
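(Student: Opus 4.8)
The plan is to prove the two set inclusions by exhibiting an explicit optimal choice of the auxiliary matrix~$E_z$, invoking the Schur complement to decode the block LMI, and using operator monotonicity of the matrix square root to control the trace of~$E_z$. First I would record that, by Definition~\ref{def:gelbrich-distance} and nonnegativity of both sides, the constraint $\mathds G(Z,\hat Z)\le \rho_z$ is equivalent to $\mathds G(Z,\hat Z)^2 \le \rho_z^2$, and that
\[
    \mathds G(Z,\hat Z)^2 = \Tr\!\left(Z + \hat Z - 2 M^{\frac12}\right), \qquad M := \hat Z^{\frac12} Z \hat Z^{\frac12} \in \mathbb S_+^d .
\]
Thus $\mc G_Z$ is exactly the set of $Z\in\mathbb S_+^d$ satisfying $\Tr(Z+\hat Z - 2M^{1/2})\le\rho_z^2$, and the task reduces to showing that introducing $E_z$ as in the statement leaves this set unchanged.

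The key observation I would establish is that, since $I\succ 0$, the Schur complement characterizes the block LMI: for $E_z\in\mathbb S_+^d$,
\[
    \begin{bmatrix} M & E_z \\ E_z & I \end{bmatrix} \succeq 0 \quad\iff\quad M - E_z^2 \succeq 0 \quad\iff\quad E_z^2 \preceq M .
\]
Because $E_z\succeq 0$, operator monotonicity of $t\mapsto t^{1/2}$ on the positive semidefinite cone yields $E_z = (E_z^2)^{1/2} \preceq M^{1/2}$, hence $\Tr(E_z)\le \Tr(M^{1/2})$, with equality attained at the admissible choice $E_z = M^{1/2}$ (which satisfies $E_z^2 = M \preceq M$). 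In other words, over all $E_z\in\mathbb S_+^d$ feasible for the LMI, the trace $\Tr(E_z)$ is maximized precisely by $E_z = M^{1/2}$.

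With this I would close both inclusions. For the forward inclusion, given $Z\in\mc G_Z$ I take $E_z = M^{1/2}$: it lies in $\mathbb S_+^d$, satisfies the LMI (the Schur complement vanishes), and gives $\Tr(Z+\hat Z - 2E_z) = \mathds G(Z,\hat Z)^2 \le \rho_z^2$, so $Z$ belongs to the right-hand set. For the reverse inclusion, given any $Z$ in the right-hand set with witness $E_z$, the bound $\Tr(E_z)\le\Tr(M^{1/2})$ gives
\[
    \mathds G(Z,\hat Z)^2 = \Tr(Z+\hat Z - 2M^{\frac12}) \le \Tr(Z+\hat Z - 2 E_z) \le \rho_z^2 ,
\]
whence $Z\in\mc G_Z$. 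The main obstacle---and the only genuinely non-routine step---is the passage from $E_z^2\preceq M$ to $\Tr(E_z)\le\Tr(M^{1/2})$, which relies on operator monotonicity of the square root (and on the constraint $E_z\succeq 0$ so that $(E_z^2)^{1/2}=E_z$); everything else is bookkeeping with the Schur complement and the definition of the Gelbrich distance.
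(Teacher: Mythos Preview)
Your proof is correct and follows essentially the same approach as the paper: both reduce the Gelbrich constraint to $\Tr(Z+\hat Z-2M^{1/2})\le\rho_z^2$, use the Schur complement to rewrite the block LMI as $E_z^2\preceq M$, and then pass from $E_z^2\preceq M$ to $E_z\preceq M^{1/2}$ (the paper cites \cite{ref:BELLMAN1968321} for this step, while you invoke operator monotonicity of the square root directly). The forward and reverse inclusions are then handled identically, with $E_z=M^{1/2}$ as the optimal witness.
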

\begin{proof}[Proof of~Lemma~\ref{lemma:sdp-linear-refor}]
By Definition~\ref{def:gelbrich-distance}, we have
\[
    \mc G_Z= \{Z \in \mathbb S_+^d : \Tr(Z+ \hat Z - 2 (\hat Z^\frac{1}{2} Z \hat Z^\frac{1}{2})^\frac{1}{2}) \leq \rho_z^2  \}.
\]
Next, introduce an auxiliary variable $E_z \in \mathbb S_+^d$ subject to the matrix inequality
$ E_z^2\preceq  (\hat Z^\frac{1}{2} Z \hat Z^\frac{1}{2})$. By~\cite[Theorem~1]{ref:BELLMAN1968321}, this inequality can be recast as~$E_z\preceq  (\hat Z^\frac{1}{2} Z \hat Z^\frac{1}{2})^\frac{1}{2}$. Hence, we can reformulate the nonlinear matrix inequality in the above representation of~$\mc G_Z$  as $\Tr(Z + \hat Z - 2 E_z) \leq \rho_z^2$.
A standard Schur complement argument reveals that the inequality~$ E_z^2\preceq  (\hat Z^\frac{1}{2} Z \hat Z^\frac{1}{2})$ is also equivalent to
\[
    \begin{bmatrix}
        \hat Z^\frac{1}{2} Z \hat Z^\frac{1}{2} & E_z \\ E_z & I
    \end{bmatrix} \succeq 0.
\]
The claim then follows by combining all of these insights.
\end{proof}
We are now ready to derive the desired SDP reformulation of problem~\eqref{eq: distributionally robust control problem -- affine simplified max min}.
\begin{proposition}
    If $\hat V \succ 0$, then
    problem~\eqref{eq: distributionally robust control problem -- affine simplified max min} is equivalent to the SDP
    \begin{equation}
        \begin{array}{ccllll}
            &\max & {\operatorname{Tr}}(G^\top QG W)  -{\operatorname{Tr}}( F (R+ H^\top Q H)^{-1}) \\[1ex]
            &\st &W \in \mathbb S^{n(T+1)}_+,~ V \in \mathbb S_+^{pT}, ~M \in \mc M,~ F \in \mathbb S_+^{Tm}\\[1ex]
            &&E_{x_0}\in\mathbb S_{+}^{n}, ~ E_{w_t} \in \mathbb S_+^n, ~ E_{v_t} \in \mathbb S_+^p \;\; \forall t \in [T-1] \\[1ex]
            && \operatorname{Tr}(W_0+ \hat X_0 - 2 E_{x_0}) \leq \rho_{x_0}^2, \\[1ex]
            && \operatorname{Tr}(W_{t+1}+ \hat  W_t - 2 E_{w_t}) \leq \rho_{w_t}^2,~ \operatorname{Tr}(V_t+ \hat V_t - 2 E_{v_t}) \leq \rho_{v_t}^2\;\; \forall t \in [T-1] \\[1ex]
            &&\begin{bmatrix}
                \hat X_0^{\frac{1}{2}} X_0 \hat X_0^{\frac{1}{2}} &E_{x_0}\\
                E_{x_0} & I_n
            \end{bmatrix}\!\succeq\! 0,\\[2ex]
            &&
            \begin{bmatrix}
                \hat W_t^{\frac{1}{2}} W_{t+1}\hat W_t^{\frac{1}{2}} &E_{w_t}\\
                E_{w_t} & I_n
            \end{bmatrix}\!\succeq \!0, ~\begin{bmatrix}
                \hat V_t^{\frac{1}{2}} V_t \hat V_t^{\frac{1}{2}} &E_{v_t}\\
                E_{v_t} & I_p
            \end{bmatrix}\!\succeq\! 0 \quad\forall t \in [T\!-\!1] \\[2.5ex]
            &&\begin{bmatrix}
                 F            &  H^\top Q GW D^\top + M/2 \\
                 (H^\top Q GW D^\top + M /2)^\top & DWD^\top + V
            \end{bmatrix} \succeq 0\\[2ex]
            &&W_0 \succeq \lambda_{\min}(\hat X_0) I, \quad W_{t+1} \succeq \lambda_{\min}(\hat W_t) I, \quad V_t \succeq \lambda_{\min}(\hat V_t) I \quad  \forall t \in [T-1].
        \end{array}
        \label{eq:linear-SDP}
    \end{equation}
    Here, $\mc M$ denotes the set of all strictly upper block triangular matrices of the form
\begin{equation*}
    \begin{bmatrix}
        0 & M_{1,2} &M_{1,3} & \ldots &M_{1,T}\\
        & 0 & M_{2,3} & & M_{2,T}\\
        & & \ddots &  & \vdots\\
        & & &0 &M_{T-1, T}\\
        & & & &0
    \end{bmatrix} \in \mathbb R^{Tm\times Tp},
\end{equation*}
where $M_{t,s} \in \R^{m \times p}$ for every $t,s\in \mathbb Z$ with $1\leq t< s\leq T$.
\label{prop:linear-sdp-refor}
\end{proposition}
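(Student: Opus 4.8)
The plan is to eliminate the inner minimization over $(U,q)$ analytically, turning problem~\eqref{eq: distributionally robust control problem -- affine simplified max min} into a single maximization that can be linearized by Schur complements, and then to encode the feasible set with Lemma~\ref{lemma:sdp-linear-refor}; the target is equality of optimal values with~\eqref{eq:linear-SDP}. Throughout I abbreviate $S = R + H^\top Q H$, which is positive definite because $R \succ 0$ and $H^\top Q H \succeq 0$. First I would dispose of $q$: since the objective contains $q^\top S q \geq 0$ and depends on $q$ in no other way, the inner minimization is attained at $q\opt = 0$ and this term drops out. Next I would collect the $U$-dependent terms: combining the two trace expressions and using the cyclic property of the trace, the objective becomes (up to the constant $\Tr(G^\top Q G W)$) equal to $\Tr(U^\top S U \Psi) + 2\Tr(U^\top \Theta)$, where $\Psi = DWD^\top + V$ and $\Theta = H^\top Q G W D^\top$. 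Crucially, $\Psi \succ 0$: by Proposition~\ref{prop:worst-case-Kalman} we may restrict the outer maximization to $\mc G_W^+ \times \mc G_V^+$, so each $V_t \succeq \lambda_{\min}(\hat V_t) I \succ 0$ (this is exactly where the hypothesis $\hat V \succ 0$ is used), whence $\Psi \succeq V \succ 0$.

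The main obstacle is the block-lower-triangular constraint $U \in \mc U$, which blocks a naive closed-form minimization. I would handle it by Lagrangian duality against the orthogonal complement of $\mc U$, namely the space $\mc M$ of strictly upper block-triangular matrices, using that $U \in \mc U$ if and only if $\Tr(U^\top M) = 0$ for every $M \in \mc M$. The inner minimization is a strictly convex, coercive quadratic over an affine subspace (coercivity from $S, \Psi \succ 0$), so strong duality holds and I can write
\[
    \min_{U \in \mc U} \big[\Tr(U^\top S U \Psi) + 2\Tr(U^\top\Theta)\big] = \sup_{M \in \mc M}\ \min_{U} \big[\Tr(U^\top S U \Psi) + 2\Tr(U^\top(\Theta + M/2))\big],
\]
with the inner minimization now unconstrained. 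Writing $\Xi = \Theta + M/2$, the first-order condition $S U \Psi + \Xi = 0$ yields the closed-form minimizer $U = -S^{-1}\Xi\Psi^{-1}$ and optimal value $-\Tr(\Xi^\top S^{-1}\Xi\Psi^{-1})$. Since the outer problem is a maximization over $(W,V)$ and we have expressed $f(W,V)$ as $\Tr(G^\top Q G W) + \sup_{M \in \mc M}[-\Tr(\Xi^\top S^{-1}\Xi\Psi^{-1})]$, I merge the two maximizations into a single $\max$ over $(W,V,M)$.

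It remains to linearize $-\Tr(\Xi^\top S^{-1}\Xi\Psi^{-1}) = -\Tr(\Xi\Psi^{-1}\Xi^\top S^{-1})$. I would introduce an epigraph matrix $F \in \mathbb S_+^{Tm}$ and impose $F \succeq \Xi\Psi^{-1}\Xi^\top$, which (as $\Psi \succ 0$) is equivalent by a Schur complement to the linear matrix inequality $\left[\begin{smallmatrix} F & \Xi \\ \Xi^\top & \Psi\end{smallmatrix}\right] \succeq 0$ of~\eqref{eq:linear-SDP}, with $\Xi = H^\top Q G W D^\top + M/2$ and $\Psi = DWD^\top + V$. Because $S^{-1} \succ 0$, this gives $\Tr(FS^{-1}) \geq \Tr(\Xi^\top S^{-1}\Xi\Psi^{-1})$ with equality attainable at $F = \Xi\Psi^{-1}\Xi^\top$; hence maximizing $-\Tr(FS^{-1})$ exactly recovers the quadratic term, and the objective collapses to the linear expression $\Tr(G^\top Q G W) - \Tr(F(R+H^\top Q H)^{-1})$.

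Finally, I would translate $\mc G_W^+ \times \mc G_V^+$ into the remaining constraints. The block-diagonal structure $W = \operatorname{diag}(X_0, W_0, \dots, W_{T-1})$ and $V = \operatorname{diag}(V_0,\dots,V_{T-1})$ lets me impose each Gelbrich ball constraint $\mathds G(\cdot,\hat{\,\cdot\,})^2 \leq \rho^2$ block by block, and Lemma~\ref{lemma:sdp-linear-refor} rewrites each one as the corresponding trace inequality together with a $2\times 2$ block LMI in the auxiliary variables $E_{x_0}, E_{w_t}, E_{v_t}$; the lower bounds $X_0 \succeq \lambda_{\min}(\hat X_0)I$, etc., are already LMIs. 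Collecting the linear objective, the Schur-complement block LMI, the Gelbrich LMIs, and the positive-definiteness bounds then reproduces~\eqref{eq:linear-SDP}. I expect the delicate points to be the justification of strong duality for the subspace-constrained minimization and keeping the transpose/cyclic bookkeeping consistent so that the multiplier enters precisely as $M/2$ in the off-diagonal block.
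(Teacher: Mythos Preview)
Your proposal is correct and follows essentially the same route as the paper: dualize the subspace-constrained quadratic minimization over $U$ using a multiplier $M\in\mc M$, solve the unconstrained inner problem in closed form to obtain $-\Tr(\Xi^\top S^{-1}\Xi\Psi^{-1})$, linearize via an epigraph variable $F$ and a Schur complement, and finally encode the Gelbrich balls through Lemma~\ref{lemma:sdp-linear-refor} while invoking Proposition~\ref{prop:worst-case-Kalman} to justify the lower bounds $V_t\succeq\lambda_{\min}(\hat V_t)I$ (hence $\Psi\succ 0$). The only cosmetic difference is that you restrict to $\mc G_W^+\times\mc G_V^+$ \emph{before} dualizing, whereas the paper appends these constraints at the end; your ordering is arguably cleaner since it makes the invertibility of $\Psi$ available when it is first needed.
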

\begin{proof}[Proof of~Proposition~\ref{prop:linear-sdp-refor}]
%By Proposition~\ref{prop:dual-sdp}, \eqref{eq: dual distributionally robust control problem restriction} can be equivalently reformulated as~\eqref{eq: distributionally robust control problem -- affine simplified max min}. Therefore, 
The proof relies on dualizing the inner minimization problem in~\eqref{eq: distributionally robust control problem -- affine simplified max min}. Note that strong duality holds because the primal problem is trivially feasible and involves only equality constraints, which implies that any feasible point is in fact a Slater point. %We may thus replace the inner minimization problem with the corresonding dual maximization problem to obtain the desired SDP reformulation.
In the following we use~$M\in \mathcal M$ to denote the Lagrange multiplier of the constraint~$U \in \mathcal{U}$, which requires all blocks of the matrix~$U$ above the main diagonal to vanish. The Lagrangian function of the inner minimization problem in~\eqref{eq: distributionally robust control problem -- affine simplified max min} can therefore be represented as
\begin{equation*}
\begin{array}{r@{\;}l}
\mc L(q, U, M)=
&\Tr\left(\big(D^\top U^\top(R + H^\top Q H)UD + G^\top Q G \big) W \right) +  2\Tr(G^\top Q HUD W) \\
&+\Tr\left(\big(U^\top (R + H^\top QH) U  \big) V \right) + q^\top (R + H^\top Q H) q + \Tr(UM^\top). %\sum\limits_{t=1}^{T}\sum\limits_{s=t+1}^{T} \Tr(M_{t,s} U_{t,s}^\top),
\end{array}
\end{equation*}
%where for all $t, s \in [T]$, $M_{t,s}, U_{t,s}\in \R^{m \times p}$ are the blocks of~$M$ and $U$, respectively. 
%Notice that if we restrict $M$ to be a strictly upper block triangular matrix, then $\sum_{t=1}^{T}\sum_{s=t+1}^{T} \Tr(M_{t,s} U_{t,s}^\top)$ can be written as $\Tr(MU^\top) = \Tr(U M^\top)$.
Recall now that $R\succ 0$ and $Q \succeq 0$, and thus $R + H^\top Q H \succ 0$. Consequently, $\mathcal L$ is minimized by $q\opt = 0$ for any fixed $U$ and $M$. In addition, the partial gradient of~$\mc L$ with respect $U$ is given by
\[
    \frac{\partial \mc L}{\partial U} = 2 (R+H^\top  Q H) U D W   D^\top +2   (R+H^\top    Q   H)   U  V+2   H^\top    Q   G   W D^\top + M.
\]
Recall also that $V\in \mathcal G^+_V$ is strictly positive, which implies that $DWD^\top + V \succ 0 $ is invertible. As we already know that $R+ H^\top Q H\succ 0$ is invertible, as well, $\mathcal L$ is minimized by
\[
    U\opt = - (R+ H^\top Q H)^{-1} \left( H^\top Q GW D^\top + M/2\right)(DWD^\top +V)^{-1}
\]
for any fixed~$M$.
% \notebt{This part will be removed and it is here for our verification.
% Momentarily let $T_1 = R+H^{\top} Q H$, $T_2 = H^\top Q GWD^\top + M/2$ and $T_3 = DWD^\top +V$. Then, we can express $U\opt$ only in terms of $T_1, T_2$ and $T_3$ as $U\opt = -T_1^{-1} T_2 T_3^{-1}$. Now, we will plug $U\opt$ into $\mc L$.
% \begin{align*}
% &\mc L(q\opt, U\opt, M) = {\color{blue}\Tr(D^\top T_3^{-1} T_2^\top T_1^{-1} T_1 T_1^{-1} T_2 T_3^{-1} D W ) }+ \Tr(G^\top Q G W)\\
% &-{\color{black} 2\Tr(T_3^{-1} T_2^\top T_1^{-1} H^\top Q G WD^\top )} + {\color{blue}\Tr(T_3^{-1}T_2^\top T_1^{-1} T_1 T_1^{-1} T_2 T_3^{-1} V)}\\
% &-{\color{black}\Tr(T_3^{-1}T_2^\top T_1^{-1}M })
% \end{align*}
% By combining the terms in {\color{blue}blue} we have
% \begin{align*}
%     \Tr(T_3^{-1} T_2^\top T_1^{-1}T_2 T_3^{-1}(DWD^\top +V))= \Tr(T_3^{-1} T_2^\top T_1^{-1}T_2),
% \end{align*}
% where the equality follows because $T_3 = DWD^\top V $.
% Next, we will combine the terms in {\color{black}red} in the $\mc L$ expression above.
%     \begin{align*}
%     &2\Tr(T_3^{-1} T_2^\top T_1^{-1} H^\top Q G WD^\top ) + \Tr(T_3^{-1}T_2^\top T_1^{-1}M) \\
%     &= \Tr(T_3^{-1}T_2^{\top} T_1^{-1} (2 H^\top QGWD^\top +M)) = 2\Tr(T_3^{-1}T_2^{\top} T_1^{-1}  T_2)
%     \end{align*}
%     Combining the new expressions of the blue and red terms we have
%     \[\mc L(q\opt, U\opt, M) = - \Tr(T_3^{-1}T_2^{\top} T_1^{-1}  T_2) + \Tr(G^\top Q G W)\]
% }
% Denote by $E_1 = R + H^\top Q H$, $E_2 = DWD^\top + V$ and $E_3 = H^\top Q G W D^\top$.
Substituting both $q\opt$ and $U\opt$ into~$\mc L$ yields the dual objective function
% \[g(q\opt, U\opt)  =\Tr(  G^\top Q G W-E_1^{-1} E_3 E_2^{-1}  E_3^\top ) \]
\begin{align*}
&g(M) = \mc L(q\opt, U\opt, M) = \Tr(G^\top Q G W)\\
&-\Tr\left( (R+H^\top Q H)^{-1} ( H^\top Q GW D^\top +M/2) (DWD^\top +V)^{-1} ( H^\top Q GW D^\top \!+\!M/2)^\top \right).
%&\hspace{0.3cm}+ \Tr(G^\top Q G W).
\end{align*}
The dual of the inner minimization problem in~\eqref{eq: distributionally robust control problem -- affine simplified max min} is thus given by $\max_{M \in \mc M} g(M)$.
%A simple algebraic derivation yields the dual of the inner minimization problem in~\eqref{eq: distributionally robust control problem -- affine simplified max min} as
%\begin{equation}
%    \begin{array}{cclll}
%         \max\limits_{M \in \mc M} &\!-\!\Tr((D W D^\top  \!\!\!+\!  V)^{\!-1} (H^\top Q GW D^\top  \!\!+\!\!  M/2)^\top ( R\!+\!  H^\top Q H)^{\!-1} (H^\top Q GW D^\top \!\!+\!\! M/2)) \\
%         & \hspace{-8cm}+ \Tr(G^\top Q  G W),
%    \end{array}
%    \label{eq:inner-dual-M}
%\end{equation}
%where $\mc M$ denotes the set of all block upper triangular matrices.
To linearize the dual objective function, we next introduce an auxiliary variable~$F \in \mathbb S_+^{mT}$ subject to the matrix inequality $F \succeq (H^\top Q G W D^\top + M/2) (DWD^\top +V)^{-1} (H^\top Q G W D^\top + M/2)^\top$. 
%One can show that for any~$F_c, F'\in \mathbb S_{+}^{mT}$ if $F \succeq F'$, then $ \Tr(F_c F') \geq \Tr(F_c F)$.
By using a standard Schur complement reformulation, we can then rewrite the dual problem as
\begin{equation}
    \begin{array}{cclll}
         &\max & \Tr(G^\top Q G W) -\Tr((R+ H^\top Q H)^{-1} F) \\[1ex]
         &\st & M \in \mc M, ~ F \in \mathbb S_+^{mT} \\
         && \begin{bmatrix}
             F & H^\top Q G W D^\top + M/2\\
             (H^\top Q G W D^\top + M/2)^\top & DW D^\top + V
         \end{bmatrix} \succeq 0.
    \end{array}
    \label{eq:linear-dual-sdp-refor-inner}
\end{equation}
%Note that the inner minimization problem in~\eqref{eq: distributionally robust control problem -- affine simplified max min} is a convex optimization problem without any inequality constraints and is feasible as $(U, q) = (0, 0)$ is feasible. Hence, strong duality holds and ~\eqref{eq:linear-dual-sdp-refor-inner} attains the same objective value as the inner minimization problem in~\eqref{eq: distributionally robust control problem -- affine simplified max min}.
Next, by replacing the inner problem in~\eqref{eq: distributionally robust control problem -- affine simplified max min} with its strong dual~\eqref{eq:linear-dual-sdp-refor-inner}, we can reformulate~\eqref{eq: distributionally robust control problem -- affine simplified max min} as
\begin{equation}
    \begin{array}{cclll}
         &\max & \Tr(G^\top Q G W) -\Tr((R+ H^\top Q H)^{-1} F) \\[1ex]
         &\st & M \in \mc M, ~F \in \mathbb S_+^{mT}, ~W \in \mathbb S_+^{n(T+1)}, ~V\in \mathbb S_{+}^{pT} \\
         && \begin{bmatrix}
             F & H^\top Q G W D^\top + M/2\\
             (H^\top Q G W D^\top + M/2)^\top & DW D^\top + V
         \end{bmatrix} \succeq 0\\[2ex]
         && \mathds G(X_0, \hat X_0)^2 \leq \rho_{x_0}^2, ~ \mathds G(W_t, \hat W_t) \leq\rho_{w_t}^2, ~\mathds G(V_t, \hat V_t) \leq \rho_{v_t}^2 \quad \forall t \in [T-1].
         %&& X_0 \succeq \lambda_{\min}(\hat X_0) I, \, W_t \succeq \lambda_{\min}(\hat W_t) I,\, V_t \succeq \lambda_{\min}(\hat V_t)I \quad &\forall t \in [T-1].
    \end{array}
    \label{eq:linear-sdp-refor-inter}
\end{equation}
By Proposition~\ref{prop:worst-case-Kalman}, the inclusion of the constraints $X_0 \succeq \lambda_{\min}(\hat X_0) I$, $W_t \succeq \lambda_{\min}(\hat W_t) I$ and $V_t \succeq \lambda_{\min}(\hat V_t)I$ for all $t \in [T-1]$ has no effect on the solution to problem \eqref{eq:linear-sdp-refor-inter}.
In addition, by Lemma~\ref{lemma:sdp-linear-refor}, each (non-linear) Gelbrich constraint in~\eqref{eq:linear-sdp-refor-inter} can be reformulated as an equivalent (linear) SDP constraint. Thus, problem~\eqref{eq:linear-sdp-refor-inter} reduces to~\eqref{eq:linear-SDP}, and the claim follows.
\end{proof}
\section{Bisection Algorithm for the Linearization Oracle}
\label{sec:bisection}
We now show that the direction-finding subproblem~\eqref{eq:Gelbrich-subproblem} can be solved efficiently via bisection. To this end, we first establish that~\eqref{eq:Gelbrich-subproblem} can be reduced to the solution of a univariate algebraic equation. 
\begin{proposition}[{\cite[Proposition A.4~(iii)]{nguyen2023bridging}}]
If $\hat Z \in \mathbb S_{++}^d$, $\Gamma_Z \in \mathbb S_+^d$, $\Gamma_Z \neq 0$ and $\rho_z \in \R_{++}$, then
\begin{equation}
\begin{array}{ccll}
     &\max &\langle \Gamma_Z, L -Z \rangle \\
     &\st & \mathds G(L, \hat Z) \leq \rho_z\\
     && L \succeq \lambda_{\min}(\hat Z) I
\end{array}
\label{eq:linear-oracle-problem}
\end{equation}
is uniquely solved by~$L\opt = (\gamma\opt)^2 (\gamma\opt I  - \Gamma_Z)^{-1}\hat Z(\gamma\opt I - \Gamma_Z)^{-1}$,
where $\gamma\opt$ is the unique solution of
\begin{equation}
\rho_z^2 - \langle \hat Z , (I - \gamma\opt (\gamma\opt I - \Gamma_Z)^{-1})^2\rangle = 0
\label{eq:algebraic-gamma-exp}
\end{equation}
in the interval $(\lambda_{\max}(\Gamma_Z), \infty)$.
\end{proposition}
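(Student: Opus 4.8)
The plan is to recognise \eqref{eq:linear-oracle-problem} as a convex program---maximising the linear functional $\langle \Gamma_Z, L\rangle$ (the term $\langle \Gamma_Z, Z\rangle$ being a constant) over the set $\mathcal F = \{L : \mathds G(L,\hat Z)\le \rho_z,\; L\succeq \lambda_{\min}(\hat Z)I\}$---and to pin down its unique maximiser through the KKT system. First I would record the structural facts: $\mathcal F$ is convex (the squared Gelbrich distance is jointly convex and semidefinite representable, and the floor constraint is a linear matrix inequality), compact (the Gelbrich ball is closed and bounded), and, crucially, contained in $\mathbb S_{++}^d$ because $\hat Z\succ 0$ forces $\lambda_{\min}(\hat Z)>0$; hence a maximiser exists by Weierstrass' theorem. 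Since $\hat Z+\epsilon I$ is strictly feasible for small $\epsilon>0$, Slater's condition holds and the KKT conditions are necessary and sufficient for optimality.

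Next I would establish uniqueness. On $\mathbb S_{++}^d$ the map $M\mapsto \Tr(M^{1/2})$ is strictly concave: along any nonzero symmetric direction $H$, the Daleckii--Krein formula expresses the second derivative as $\sum_{i,j}\theta_{ij}\,|\langle q_i,Hq_j\rangle|^2$, where the $q_i$ are eigenvectors of $M$ and each coefficient $\theta_{ij}$ is a divided difference of the strictly decreasing function $f'(x)=\tfrac12 x^{-1/2}$ and hence strictly negative. Consequently $L\mapsto \mathds G(L,\hat Z)^2=\Tr L+\Tr\hat Z-2\Tr((\hat Z^{1/2}L\hat Z^{1/2})^{1/2})$ is strictly convex on $\mathbb S_{++}^d$ (the inner map $L\mapsto \hat Z^{1/2}L\hat Z^{1/2}$ is a linear bijection). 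Because every feasible point satisfies $L\succeq\lambda_{\min}(\hat Z)I\succ 0$, strict convexity gives uniqueness: two distinct maximisers would have a midpoint strictly interior to the Gelbrich ball, from which a small move along $\Gamma_Z$ (which preserves both constraints since $\Gamma_Z\succeq 0$) would strictly increase the objective, a contradiction.

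I would then compute the maximiser from stationarity. Differentiating yields $\nabla_L \mathds G(L,\hat Z)^2 = I - \hat Z^{1/2}(\hat Z^{1/2}L\hat Z^{1/2})^{-1/2}\hat Z^{1/2}$. The Gelbrich constraint must be active at the optimum: were it slack, optimality of a linear objective would place $\Gamma_Z$ in the normal cone of the floor constraint $\{L\succeq\lambda_{\min}(\hat Z)I\}$, forcing $\Gamma_Z\preceq 0$ and contradicting $\Gamma_Z\succeq 0,\ \Gamma_Z\neq 0$. With multiplier $\gamma$ on the active squared-Gelbrich constraint, stationarity reads $\gamma I-\Gamma_Z=\gamma\,\hat Z^{1/2}(\hat Z^{1/2}L\hat Z^{1/2})^{-1/2}\hat Z^{1/2}$, which is solvable only when $\gamma I-\Gamma_Z\succ 0$, i.e. $\gamma>\lambda_{\max}(\Gamma_Z)$; inverting gives $(\hat Z^{1/2}L\hat Z^{1/2})^{1/2}=\gamma\,\hat Z^{1/2}(\gamma I-\Gamma_Z)^{-1}\hat Z^{1/2}$ and hence $L\opt=\gamma^2(\gamma I-\Gamma_Z)^{-1}\hat Z(\gamma I-\Gamma_Z)^{-1}$. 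A short check confirms $L\opt$ satisfies the floor constraint: since $\Gamma_Z\succeq 0$ gives $(\gamma I-\Gamma_Z)^{-1}\succeq \gamma^{-1}I$ and thus $\gamma^2(\gamma I-\Gamma_Z)^{-2}\succeq I$, we obtain $L\opt\succeq \lambda_{\min}(\hat Z)\,\gamma^2(\gamma I-\Gamma_Z)^{-2}\succeq \lambda_{\min}(\hat Z)I$, so the triple $(L\opt,\gamma\opt,0)$ with zero multiplier on the floor satisfies every KKT condition and is optimal by sufficiency.

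Finally I would pin down $\gamma\opt$ through the active constraint. Using the identity $I-\gamma(\gamma I-\Gamma_Z)^{-1}=-\Gamma_Z(\gamma I-\Gamma_Z)^{-1}$, substituting $L\opt$ into $\mathds G(L\opt,\hat Z)^2=\rho_z^2$ reproduces exactly $\rho_z^2-\langle \hat Z,(I-\gamma(\gamma I-\Gamma_Z)^{-1})^2\rangle=0$. To see that this has a unique root in $(\lambda_{\max}(\Gamma_Z),\infty)$, I would diagonalise $\Gamma_Z=\sum_i\mu_i p_ip_i^\top$ and write the subtracted term as $\phi(\gamma)=\sum_i \tfrac{\mu_i^2}{(\gamma-\mu_i)^2}\,p_i^\top\hat Z p_i$; each summand with $\mu_i>0$ is strictly decreasing on the interval, $\phi(\gamma)\to\infty$ as $\gamma\downarrow\lambda_{\max}(\Gamma_Z)$ (a term with $\mu_i=\lambda_{\max}(\Gamma_Z)>0$ blows up, using $\Gamma_Z\neq 0$) and $\phi(\gamma)\to 0$ as $\gamma\to\infty$, so $\phi$ is a continuous strictly decreasing bijection onto $(0,\infty)$ and the value $\rho_z^2>0$ is attained exactly once. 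The main obstacle is the stationarity step---correctly differentiating the matrix square root in the Gelbrich distance, solving the resulting matrix equation in closed form, and verifying that the floor constraint admits a zero multiplier so that the clean formula is legitimate---while the strict-concavity estimate underlying uniqueness is the other delicate ingredient.
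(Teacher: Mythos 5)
Your derivation is correct, but note that the paper does not prove this proposition at all: it is imported verbatim from \cite[Proposition A.4(iii)]{nguyen2023bridging}, so there is no in-paper argument to compare against. Your KKT route is a legitimate self-contained proof and, as far as I can check, every step goes through: the feasible set is convex, compact and contained in $\mathbb S_{++}^d$ (so the matrix square roots are differentiable where needed); Slater holds via $\hat Z+\epsilon I$; the gradient $\nabla_L\,\mathds G(L,\hat Z)^2=I-\hat Z^{1/2}(\hat Z^{1/2}L\hat Z^{1/2})^{-1/2}\hat Z^{1/2}$ is the correct chain-rule computation; the stationarity equation inverts to the stated closed form with $\gamma>\lambda_{\max}(\Gamma_Z)$; the identity $I-\gamma(\gamma I-\Gamma_Z)^{-1}=-\Gamma_Z(\gamma I-\Gamma_Z)^{-1}$ turns the active constraint into \eqref{eq:algebraic-gamma-exp}, whose left-hand side is a strictly decreasing bijection of $(\lambda_{\max}(\Gamma_Z),\infty)$ onto $(0,\infty)$ because $\lambda_{\max}(\Gamma_Z)>0$ and $\hat Z\succ 0$; and the strict concavity of $M\mapsto\Tr(M^{1/2})$ via Daleckii--Krein correctly yields strict convexity of the squared Gelbrich distance on $\mathbb S_{++}^d$ and hence uniqueness. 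Two small presentational simplifications: the activeness of the Gelbrich constraint follows more directly by observing that if it were slack one could move from the optimum along $+\Gamma_Z$ (feasible for both constraints, strictly improving), without invoking the normal cone of the semidefinite floor; and your uniqueness argument already subsumes this, since it is the same perturbation applied at the midpoint. Neither affects correctness.
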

In practice, we need to solve the algebraic equation~\eqref{eq:algebraic-gamma-exp} numerically.
The numerical error in approximating~$\gamma\opt$ should be contained to ensure that~$L\opt$ approximates the exact maximizer of problem~\eqref{eq:linear-oracle-problem}. 
The next proposition shows that, for any tolerance~$\delta \in (0,1)$, a $\delta$-approximate solution of~\eqref{eq:linear-oracle-problem} can be computed with an efficient bisection algorithm.
\begin{proposition}[{\cite[Theorem~6.4]{nguyen2023bridging}}] 
For any fixed $\rho_z \in \mathbb{R}_{++}, \hat Z \in \mathbb{S}_{++}^d$ and $\Gamma_Z \in \mathbb{S}_{+}^d, \Gamma_Z \neq 0$, define $\mc G_Z^+ = \{Z \in \mathbb S_+^d : \mathds G(Z, \hat Z) \leq \rho_z, Z \succeq \lambda_{\min}(\hat Z)\}$ as the feasible set of problem~\eqref{eq:linear-oracle-problem}, and let $Z\in \mc G_Z^+$ be any reference covariance matrix. Additionally, let $\delta \in(0,1)$ be the desired oracle precision, and define $\varphi(\gamma)=\gamma(\rho^2+\left\langle\gamma(\gamma I-\Gamma_Z\right)^{-1}-I, \hat Z \rangle)-\langle Z, \Gamma_Z\rangle$ for any $\gamma>\lambda_{\max }(\Gamma_Z)$. Then, Algorithm~\ref{alg:bisection} returns in finite time a matrix $L_Z^\delta \in \mathbb{S}_{+}^d$ with the following properties.
(i)~Feasibility: $L_Z^\delta \in \mc G_Z^+$
(ii)~$\delta$-Suboptimality: $\langle L_Z^\delta- Z, \Gamma_Z\rangle \geq \delta \max _{L \in \mathcal{G}_Z^{+}}\langle \Gamma_Z, L-Z\rangle$.
\label{prop:bisection}
\end{proposition}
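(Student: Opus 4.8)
The plan is to recognize $\varphi$ as the Lagrangian dual objective of the oracle problem~\eqref{eq:linear-oracle-problem} and to run the bisection as a certified primal--dual scheme along the parametric family $L(\gamma)=\gamma^2(\gamma I-\Gamma_Z)^{-1}\hat Z(\gamma I-\Gamma_Z)^{-1}$ supplied by the preceding proposition. First I would dualize the squared Gelbrich constraint $\mathds G(L,\hat Z)^2\le\rho_z^2$ with a scalar multiplier $\gamma\ge 0$; maximizing the Lagrangian over $L\succeq 0$ recovers exactly $L(\gamma)$ for $\gamma>\lambda_{\max}(\Gamma_Z)$, and back-substitution gives the dual value $D(\gamma)=\gamma\rho_z^2+\gamma\langle\Gamma_Z(\gamma I-\Gamma_Z)^{-1},\hat Z\rangle$. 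Because $\gamma(\gamma I-\Gamma_Z)^{-1}-I=\Gamma_Z(\gamma I-\Gamma_Z)^{-1}$, this equals $\varphi(\gamma)+\langle Z,\Gamma_Z\rangle$, so $\varphi(\gamma)$ is the dual upper bound on the shifted objective $\langle\Gamma_Z,L-Z\rangle$; weak duality then yields $\varphi(\gamma)\ge\max_{L\in\mc G_Z^+}\langle\Gamma_Z,L-Z\rangle$ for every admissible $\gamma$.

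Next I would establish the monotonicity that makes the bisection well posed. Differentiating with $\tfrac{\mathrm d}{\mathrm d\gamma}(\gamma I-\Gamma_Z)^{-1}=-(\gamma I-\Gamma_Z)^{-2}$ and using that $\Gamma_Z$ commutes with $(\gamma I-\Gamma_Z)^{-1}$ gives $\varphi'(\gamma)=\rho_z^2-\langle\hat Z,\Gamma_Z^2(\gamma I-\Gamma_Z)^{-2}\rangle=:h(\gamma)$, and the identity $(I-\gamma(\gamma I-\Gamma_Z)^{-1})^2=\Gamma_Z^2(\gamma I-\Gamma_Z)^{-2}$ shows that $h$ is precisely the left-hand side of the algebraic equation~\eqref{eq:algebraic-gamma-exp}. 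The map $h$ is continuous and strictly increasing on $(\lambda_{\max}(\Gamma_Z),\infty)$, with $h(\gamma)\to-\infty$ as $\gamma\downarrow\lambda_{\max}(\Gamma_Z)$ and $h(\gamma)\to\rho_z^2>0$ as $\gamma\to\infty$; hence it has the unique root $\gamma\opt$ of the preceding proposition, $\varphi$ is convex with minimizer $\gamma\opt$, and $L(\gamma)$ is feasible in~\eqref{eq:linear-oracle-problem} if and only if $h(\gamma)\ge 0$, i.e. $\gamma\ge\gamma\opt$. Substituting $h(\gamma\opt)=0$ into $\varphi(\gamma\opt)$ then confirms strong duality, $\varphi(\gamma\opt)=\langle\Gamma_Z,L(\gamma\opt)-Z\rangle=\max_{L\in\mc G_Z^+}\langle\Gamma_Z,L-Z\rangle=:\mathrm{opt}$.

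With these facts the bisection is simply a root finder for $h=\varphi'$ on a bracket $[\gamma_\ell,\gamma_u]$ satisfying $h(\gamma_\ell)\le 0\le h(\gamma_u)$, which exists by the limiting behaviour above and keeps $\gamma\opt\in[\gamma_\ell,\gamma_u]$ and the upper endpoint on the feasible side $\gamma_u\ge\gamma\opt$. I would return the feasible candidate $L_Z^\delta=L(\gamma_u)$, giving property~(i). For property~(ii) I would use the primal value $v(\gamma_u)=\langle\Gamma_Z,L(\gamma_u)-Z\rangle$ together with the dual value $\varphi(\gamma_u)$ as a matching certificate: since $\varphi(\gamma_u)\ge\mathrm{opt}$, the stopping test $v(\gamma_u)\ge\delta\,\varphi(\gamma_u)$ immediately gives $v(\gamma_u)\ge\delta\,\mathrm{opt}$, which is exactly the $\delta$-suboptimality claim. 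Finite termination follows because $v(\cdot)$ and $\varphi(\cdot)$ are continuous on a neighbourhood of $\gamma\opt$ and coincide there with common value $\mathrm{opt}$; assuming $\mathrm{opt}>0$, the ratio $v(\gamma_u)/\varphi(\gamma_u)\to 1>\delta$ as the bracket width shrinks, so halving the interval each step drives $\gamma_u\downarrow\gamma\opt$ and triggers the test after finitely many iterations.

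The main obstacle is the interface between the singular geometry at the boundary $\gamma=\lambda_{\max}(\Gamma_Z)$ and the multiplicative accuracy guarantee. I must verify that $v$ and $\varphi$ extend continuously to $\gamma\opt$ — which is harmless since $\gamma\opt$ lies strictly inside $(\lambda_{\max}(\Gamma_Z),\infty)$, so $(\gamma\opt I-\Gamma_Z)^{-1}$ is bounded — and, more delicately, quantify how fast the primal--dual gap $\varphi(\gamma_u)-v(\gamma_u)$ contracts as $\gamma_u\downarrow\gamma\opt$, so as to turn the qualitative ratio limit into the explicit finite iteration count. Obtaining this rate amounts to a Lipschitz estimate on $v$ and $\varphi$ controlled by the spectral gap $\gamma-\lambda_{\max}(\Gamma_Z)$, and, together with dispatching the degenerate case $\mathrm{opt}=0$ (where $Z$ already maximizes the objective and the oracle may simply return $Z$), is where the real work lies; by contrast, the duality identification and the monotonicity of $h$ are routine once the explicit maximizer $L(\gamma)$ from the preceding proposition is in hand.
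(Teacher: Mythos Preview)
The paper does not supply its own proof of this proposition: it is quoted verbatim from \cite[Theorem~6.4]{nguyen2023bridging} and followed immediately by the statement of Algorithm~\ref{alg:bisection}, with no argument given. So there is nothing in the present paper to compare your attempt against.

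That said, your reconstruction is essentially the right one and matches the argument in the cited reference: identify $\varphi$ as the Lagrangian dual of~\eqref{eq:linear-oracle-problem} (so that $\varphi(\gamma)\ge\mathrm{opt}$ for every admissible $\gamma$), recognize $\varphi'(\gamma)$ as the left side of~\eqref{eq:algebraic-gamma-exp}, run bisection on this monotone function, and certify termination via the primal--dual ratio test. Two points you gloss over that the cited proof handles explicitly: (a)~the specific endpoints $\underline\gamma=\lambda_1\bigl(1+(p_1^\top\hat Z p_1)^{1/2}/\rho\bigr)$ and $\overline\gamma=\lambda_1\bigl(1+\Tr(\hat Z)^{1/2}/\rho\bigr)$ in Algorithm~\ref{alg:bisection} must be shown to bracket $\gamma\opt$ (your appeal to ``limiting behaviour'' establishes only that \emph{some} bracket exists, not that this one works); and (b)~feasibility of $L(\tilde\gamma)$ requires not only $\mathds G(L(\tilde\gamma),\hat Z)\le\rho_z$ but also $L(\tilde\gamma)\succeq\lambda_{\min}(\hat Z)I$, which follows because $\gamma(\gamma I-\Gamma_Z)^{-1}\succeq I$ whenever $\Gamma_Z\succeq 0$, but you do not mention it. Also note that the algorithm returns $L(\tilde\gamma)$ at the terminating midpoint (where the stopping test is checked), not $L(\gamma_u)$ as you write; this is cosmetic since the test enforces $\varphi'(\tilde\gamma)>0$ and hence $\tilde\gamma>\gamma\opt$.
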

\begin{algorithm}
\hspace*{\algorithmicindent} \textbf{Input:} nominal covariance matrix $\hat Z \in \mathbb S_{++}^d$, radius $\rho\in \R_{++}$,  \\
\hspace*{\algorithmicindent} \quad \quad \quad reference covariance matrix $Z \in \mc G_Z^+$,\\
\hspace*{\algorithmicindent} \quad \quad \quad gradient matrix~$\Gamma_Z\in \mathbb S_+^d$, $\Gamma_Z\neq 0$, precision~$\delta \in (0,1)$,\\
\hspace*{\algorithmicindent} \quad \quad \quad dual objective function~$\phi(\gamma)$ defined in Proposition~\ref{prop:bisection}
\caption{Bisection algorithm to compute~$L_Z^\delta$}
\label{alg:bisection}
\begin{algorithmic}[1]
\State{set $\lambda_1 \gets \lambda_{\max}(\Gamma_Z)$}, and let $p_1$ be an eigenvector for $\lambda_1$
\State{set $\underline{\gamma} \gets \lambda_1(1+ (p_1^\top\hat Z p_1)^{\frac{1}{2}} / \rho)$ and $\overline\gamma \gets \lambda_1 (1+ \Tr(\hat Z)^{\frac{1}{2}} / \rho)$}
\State{\textbf{repeat}}
\State{\quad set $\tilde \gamma \gets (\overline \gamma + \underline \gamma ) /2$ and $ L \gets (\tilde \gamma)^2  (\tilde \gamma I - \Gamma_Z)^{-1} \hat Z (\tilde \gamma I - \Gamma_Z)^{-1}$ }
\State{\quad \textbf{if} $\frac{\diff \phi}{\diff \gamma}(\tilde \gamma) < 0$~\textbf{then}~set $\underline \gamma \gets \tilde \gamma$~\textbf{else} \quad $\overline \gamma \gets \tilde \gamma$~\textbf{endif}}
\State{\textbf{until} $\frac{\diff \phi}{\diff\gamma}(\tilde \gamma) > 0$ and $\langle  L- Z, \Gamma_Z \rangle \geq \delta \phi(\tilde \gamma)$}
\end{algorithmic}
\hspace*{\algorithmicindent} \textbf{Output}: $ L$
\end{algorithm}
In summary, for any~$Z \in \{X_0, W_0, \ldots, W_{T-1}, V_0, \ldots, V_{T-1}\}$, Algorithm~\ref{alg:bisection} computes a~$\delta$-approximate solutions to the direction-finding subproblem~\eqref{eq:Gelbrich-subproblem} with $\Gamma_Z = \nabla_Z f(W, V)$.

\section{Additional Information on Experiments}

\textbf{Generation of Nominal Covariance Matrices.}
The nominal covariance matrices of the exogenous uncertainties are constructed randomly using the following procedure. For each exogenous uncertainty~$z \in \{x_0, w_0, \ldots, w_{T-1}, v_0,\ldots, v_{T-1}\}$, we denote the dimension of~$z$ by~$d$ and sample a matrix~$M_Z \in \R^{d\times d}$ from the uniform distribution on the hypercube $[0,1]^{d\times d}$. Next, we define $\Xi_Z\in \R^{d\times d}$ as the orthogonal matrix whose columns represent the orthonormal eigenvectors of the symmetric matrix $M_Z + M_Z^\top$. Finally, we set $\hat Z = \Xi_Z \Lambda_Z \Xi_Z^\top$, where $\Lambda_Z$ is a diagonal matrix whose main diagonal is sampled uniformly from the interval~$[1, 2]^d$. The rationale for adopting this cumbersome procedure is to ensure that the covariance matrix $\hat Z$ is positive definite.

\textbf{Optimality Gap.} The optimality gap of the Frank-Wolfe algorithm visualized in~Figure~\ref{fig:suboptimality} is calculated as the sum of the surrogate optimality gaps~$\langle  L_Z^\delta - Z, \nabla_Z f(W, V) \rangle$ across all~$Z \in \{X_0,W_0 \ldots, W_{T-1}, V_0, \ldots, V_{T-1}\}$. For more information on the surrogate optimality gaps see~\cite{jaggi2013revisiting}.

% \bibliographystyle{plain} 
% \bibliography{bib}

\end{document}